\definecolor{blush}{rgb}{0.87, 0.36, 0.51}
\definecolor{jazzberryjam}{rgb}{0.65, 0.04, 0.37}
\definecolor{tiffanyblue}{rgb}{0.04, 0.73, 0.71}
\definecolor{darkcyan}{rgb}{0.0, 0.55, 0.55}
\tikzset{double line with arrow/.style args={#1,#2}{decorate,decoration={markings,%
mark=at position 0 with {\coordinate (ta-base-1) at (0,1pt);
\coordinate (ta-base-2) at (0,-1pt);},
mark=at position 1 with {\draw[#1] (ta-base-1) -- (0,1pt);
\draw[#2] (ta-base-2) -- (0,-1pt);
}}}}
\tikzset{Equal/.style={-,double line with arrow={-,-}}}
\let\save@mathaccent\mathaccent
\newcommand*\if@single[3]{%
  \setbox0\hbox{${\mathaccent"0362{#1}}^H$}%
  \setbox2\hbox{${\mathaccent"0362{\kern0pt#1}}^H$}%
  \ifdim\ht0=\ht2 #3\else #2\fi
  }
\newcommand*\rel@kern[1]{\kern#1\dimexpr\macc@kerna}
\newcommand*\widebar[1]{\@ifnextchar^{{\wide@bar{#1}{0}}}{\wide@bar{#1}{1}}}
\newcommand*\wide@bar[2]{\if@single{#1}{\wide@bar@{#1}{#2}{1}}{\wide@bar@{#1}{#2}{2}}}
\newcommand*\wide@bar@[3]{%
  \begingroup
  \def\mathaccent##1##2{%
    \let\mathaccent\save@mathaccent
    \if#32 \let\macc@nucleus\first@char \fi
    \setbox\z@\hbox{$\macc@style{\macc@nucleus}_{}$}%
    \setbox\tw@\hbox{$\macc@style{\macc@nucleus}{}_{}$}%
    \dimen@\wd\tw@
    \advance\dimen@-\wd\z@
    \divide\dimen@ 3
    \@tempdima\wd\tw@
    \advance\@tempdima-\scriptspace
    \divide\@tempdima 10
    \advance\dimen@-\@tempdima
    \ifdim\dimen@>\z@ \dimen@0pt\fi
    \rel@kern{0.6}\kern-\dimen@
    \if#31
      \overline{\rel@kern{-0.6}\kern\dimen@\macc@nucleus\rel@kern{0.4}\kern\dimen@}%
      \advance\dimen@0.4\dimexpr\macc@kerna
      \let\final@kern#2%
      \ifdim\dimen@<\z@ \let\final@kern1\fi
      \if\final@kern1 \kern-\dimen@\fi
    \else
      \overline{\rel@kern{-0.6}\kern\dimen@#1}%
    \fi
  }%
  \macc@depth\@ne
  \let\math@bgroup\@empty \let\math@egroup\macc@set@skewchar
  \mathsurround\z@ \frozen@everymath{\mathgroup\macc@group\relax}%
  \macc@set@skewchar\relax
  \let\mathaccentV\macc@nested@a
  \if#31
    \macc@nested@a\relax111{#1}%
  \else
    \def\gobble@till@marker##1\endmarker{}%
    \futurelet\first@char\gobble@till@marker#1\endmarker
    \ifcat\noexpand\first@char A\else
      \def\first@char{}%
    \fi
    \macc@nested@a\relax111{\first@char}%
  \fi
  \endgroup
}
\newcommand{\wb}{\widebar}
\numberwithin{equation}{section}
\newcommand{\leqnomode}{\tagsleft@true\let\veqno\@@leqno}
\newcommand{\reqnomode}{\tagsleft@false\let\veqno\@@eqno}
\newcommand{\numberset}{\mathbb}
\newcommand{\N}{\numberset{N}}
\newcommand{\Q}{\numberset{Q}}
\newcommand{\R}{\numberset{R}}
\newcommand{\bQ}{\numberset{Q}}
\newcommand{\bk}{k}
\newcommand{\Pn}{\numberset{P}} 
\newcommand{\bP}{\numberset{P}}
\newcommand{\A}{\numberset{A}} 
\newcommand{\cO}{\mathcal{O}}
\newcommand{\cF}{\mathcal{F}}
\newcommand{\cG}{\mathcal{G}}
\newcommand{\cC}{\mathcal{C}}
\newcommand{\cB}{\mathcal{B}}
\newcommand{\cW}{\mathcal{W}}
\DeclareMathOperator{\Spec}{Spec}
\DeclareMathOperator{\Supp}{Supp}
\DeclareMathOperator{\supp}{Supp}
\DeclareMathOperator{\Exc}{Exc}
\DeclareMathOperator{\red}{red}
\DeclareMathOperator{\codim}{codim}
\DeclareMathOperator{\Bs}{Bs}
\begin{document}

\title{On the canonical bundle formula in positive characteristic}

\author{Marta Benozzo}
\email{marta.benozzo.20@alumni.ucl.ac.uk}
\address{University College London, London, WC1E 6BT, UK}
\curraddr{Université Paris-Saclay, Orsay, 91405, France}

\subjclass[2020]{14E30, 14G17, 14D06}
\keywords{Canonical bundle formula, positive characteristic birational geometry}

\begin{abstract}
    Let $f \colon X \rightarrow Z$ be a fibration from a normal projective variety $X$ of dimension $n$ onto a normal curve $Z$ over a perfect field of characteristic $p>2$. 
    Let $(X, B)$ be a dlt pair such that the induced pair on a general fibre is log canonical.
    Assuming the LMMP and the existence of log resolutions in dimension $\leq n$, we prove that, when $K_X+B$ is $f$-nef, the moduli part is nef up to a birational map $Y \dashrightarrow X$.
    As a corollary, we prove positivity of the moduli part in the $K$-trivial case, i.e.\ when $K_X+B \sim_{\Q} f^*L$ for some $\Q$-Cartier $\Q$-divisor $L$ on $Z$.
    In particular, consider a dlt pair $(X, B)$ of dimension $3$ over an algebraically closed field of characteristic $p>5$ such that the induced pair on a general fibre is log canonical, then the canonical bundle formula holds unconditionally.
\end{abstract}

\maketitle

\setcounter{tocdepth}{1}
\tableofcontents

\section*{Introduction}

The classification of varieties is one of the guiding problems in algebraic geometry.
An important tool that birational geometers use to this end is the study of positivity properties of the canonical divisor.
A natural question in the field is whether we can meaningfully relate the canonical divisors of the source and the target of a fibration.
The canonical bundle formula tackles this problem.
Kodaira's result on elliptic fibrations is the first instance of a formula in this direction (see for example \cite[Theorem 8.2.1, Chapter 8]{KollarCBF}).
It states that, given an elliptic fibration $f\colon X \rightarrow Z$ from a normal projective surface over an algebraically closed field of characteristic $0$ such that $K_X \sim_{\Q} f^*L$ for some $\Q$-Cartier divisor $L$ on $Z$, then
\[
    K_X \sim_{\Q} f^*(K_Z+B_Z+M_Z),
\]
where $B_Z$ is an effective $\Q$-divisor which has an explicit description in terms of the singularities of the fibres, while $M_Z$ is a $\Q$-divisor defined via the $j$-invariant of the fibres, a number that classifies elliptic curves.
More precisely, if $j\colon Z \to \bP^1$ is the map that generically sends $z \in Z$ to the $j$-invariant of the elliptic curve $f^{-1}(z)$, then $M_Z$ is a positive rational multiple of $j^*\cO_{\bP^1}(1)$.
Over fields of positive characteristic, a similar formula holds with an additional term that takes into account the possible presence of \emph{wild fibres} (\cite[Theorem 2]{Enriques}).

Later, a similar formula was proven for \emph{$K$-trivial} fibrations in characteristic $0$.
If $(X, B)$ is a log canonical pair and $f\colon X \rightarrow Z$ is a fibration such that $K_X+B \sim_{\Q} f^*L$, for some $\Q$-Cartier $\Q$-divisor $L$ on $Z$, then we can write $L= K_Z+B_Z+M_Z$.
The divisor $B_Z$ is defined according to the singularities of $f$, whereas $M_Z$ measures how far the fibration is from being isotrivial, namely its \emph{variation}.
In general, it is difficult to construct a moduli space for the fibres, so $M_Z$ does not have an explicit description as in the elliptic curves case.
However, we can at least study whether it defines a meaningful map from $Z$.
The first step is looking at the positivity properties of $M_Z$.
These are essential to understand especially for inductive purposes: if we control $B_Z$ and $M_Z$, we can infer properties of $X$ from the study of $Z$, which is lower dimensional.
In this spirit, this formula has had many important applications in birational geometry, ranging from adjunction on log canonical centres (\cite{Kaw98,DS}), to finite generation of the log canonical ring (\cite[\S 5]{FujinoMori}).
Using variation of Hodge structures, it is possible to show that, under further smoothness assumptions, $f_*\omega_{X/Z}$ is \textit{semipositive} (see \cite[\S 8.9]{KollarCBF} for an account of the details).
Given this, in \cite{Kaw98,FujinoMori,Shokurov,modulibdiv,KollarCBF}, the authors proved that, up to a birational base change, $M_Z$ is nef. Moreover, we can control the singularities of the induced pair on $Z$ (\cite[Theorem 3.1]{Shokurov}).
Unfortunately, these techniques cannot be used over fields of positive characteristics. In fact, over any algebraically closed field of characteristic $p>0$, it is possible to construct fibrations $f\colon X \to Z$, where $X$ is a smooth surface and $Z:= \bP^1$, called Moret--Bailly families, which give counterexamples to the positivity of $f_*\omega_{X/Z}$ (\cite{MoretBailly}, \cite[Proposition 3.16]{SZ20}).

With the development of the theory of \textit{$F$-singularities}, which take into account both the geometry and the arithmetic of the varieties, it was possible to prove a canonical bundle formula using more algebraic techniques.
Indeed, if we ask the pair induced on the generic fibre to be \textit{globally $F$-split} (\cite[Definition 2.16]{DS}), the splitting allows to descend effectiveness on the base. More precisely, $K_X+B \sim_{\Q} f^*(K_Z+\Delta_Z)$, where $\Delta_Z$ is an effective, canonically defined $\Q$-divisor (\cite[Theorem 5.2]{DS}, \cite[Theorem 3.17]{Ejiri}).
Moreover, since moduli spaces of curves are well understood in any characteristic, when the fibration has relative dimension one we can exploit them to get the desired positivity (\cite[Lemma 6.6, Lemma 6.7]{BPF}, \cite[Theorem 3.2]{Jakub}).
For this result we need to assume that the geometric generic fibre is smooth and the pair induced on it is log canonical, whereas in general the statement is known to be false (\cite[Example 3.5]{Jakub}).
If only the generic fibre is log canonical, it is still possible to prove a weaker statement for fibrations of relative dimension one by considering purely inseparable covers of the base (\cite[Theorem 1]{Jakub}, \cite[Theorem 6.2]{CBF_inseparable}).

In the paper \cite{positivity}, a new approach has been taken in characteristic $0$.
The advantage is that this approach does not make use of variation of Hodge structures, but rather techniques coming from the minimal model program (MMP).
In order to have the necessary flexibility, the authors of \cite{positivity} consider more general fibrations $f\colon X \to Z$, not only $K$-trivial ones. In this context, it is still possible to define a discriminant part $B_Z$ on $Z$ measuring the singularities of $f$ and then the moduli part $M_X$ is a $\Q$-divisor defined on $X$.
The problem is again to prove that the moduli part has some geometric meaning linked to the variation in moduli of the fibres. More specifically, the main result of the paper is that, given $f\colon X \to Z$ a fibration between normal projective varieties over an algebraically closed field and $B$ an effective $\Q$-divisor such that the pair $(X, B)$ is log canonical and $K_X+B$ is $f$-nef, the moduli part $M_X$ is nef, up to passing to a different birational model of the fibration (\cite[Theorem 1.1]{positivity}).

Recently, there has been a lot of developments on the MMP in positive characteristic.
In particular, it has been proven that we can run the MMP in dimension at most $3$ over fields of characteristic $p>5$ (\cite{HX,Bir,BW,MMPlc}), and we have partial results in low characteristics (\cite{low_char_MMP2,low_char_MMP}). Moreover, log resolutions have been constructed in dimension at most $3$.
In view of this, it is natural to ask whether the techniques in \cite{positivity} could be adapted to the positive characteristic setting.

One of the main problems to follow the strategy in characteristic $0$ is that, due to the failure of generic smoothness, it is not even possible to define a discriminant part of the fibration as all the fibres may be very singular.
The main result of this paper is that, assuming that the fibration has log canonical general fibres, a canonical bundle formula holds over perfect fields of positive characteristic when the base is a curve, conjecturally to the MMP.

\begin{theorem}[see \autoref{t-goal}] \label{t-goal_intro}
    Assume the log MMP, inversion of adjunction and the existence of log resolutions in dimension up to $n$.
    Let $f\colon X \rightarrow Z$ be a fibration from a normal projective $\Q$-factorial variety $X$ of dimension $n$ onto a normal projective curve $Z$ defined over a perfect field $k$ of characteristic $p>2$. Let $B \geq 0$ be a $\Q$-divisor such that $(X, B)$ is dlt and the pair induced on a general fibre has at most log canonical singularities.
    Suppose that $K_X+B$ is $f$-nef.
    Then, there exist a pair $(Y, C)$ and a commutative diagram
    \[
    \xymatrix{
    Y \ar@{-->}[r]^{b} \ar[dr]_{g} & X \ar[d]^f \\
     & Z,
    }
    \]
    where $b$ is a birational map such that
\begin{itemize}
    \item[(i)] $(X/Z, B)$ and $(Y/Z, C)$ are crepant over the generic point of $Z$ (see \autoref{d-generically crepant});
    \item[(ii)] the moduli part $M_Y$ of $(Y/Z, C)$ is nef.
    \end{itemize}
\end{theorem}

The proof of positivity of the moduli part in \cite{positivity} follows two main steps: first, the statement is proven for a specific class of fibrations with particularly good singularities (satisfying \textit{Property $(\ast)$}, as in \autoref{d-Prop star}), and then the general case is reduced to this class with a birational base change (a \textit{$(\ast)$-modification}, as in \autoref{t-property star modifications v1.2}).
Using Weak Semistable reduction (\cite[Theorem 2.1, Proposition 4.4]{AK}), over fields of characteristic $0$ it is possible to construct $(\ast)$-modifications in any dimension. However, over fields of positive characteristic, this result has not been proven in such generality. Nonetheless, if the base of the fibration is a curve, we construct $(\ast)$-modifications using log resolutions and the MMP.
Another difficulty is that, in the proof of the main result, the authors of \cite{positivity} proceed by induction on the dimension by producing a log canonical centre and performing adjunction on it.
In positive characteristic however, due to the failure of \textit{generic smoothness} (\cite[Tag 056V]{stacks}), this is not enough: the general fibres of the induced fibration could be very singular.
The failure of generic smoothness stems from the fact that the generic fibre of a fibration $f\colon X \rightarrow Z$ over a field of positive characteristic is defined over an \emph{imperfect} field.
However, this is somehow the only obstacle and, after a purely inseparable base change, all the singularities of the general fibres appear also on the total space.
More precisely, we consider the diagram:
\leqnomode
\begin{equation}\tag{\raisebox{-0.5ex}{\Snowflake}} \label{s-intro fibres}
    \begin{tikzcd}
    X^{(e)} \arrow[d,"f_e", swap] \arrow[r] & X \arrow[d, "f"] \\
    Z^e \arrow[r, "F^e"] & Z,
    \end{tikzcd}
\end{equation}
\reqnomode
where $F^e$ is the $e$\textsuperscript{th}-power of the Frobenius morphism and $X^{(e)}$ is the normalisation of the reduction of the fibre product.
The resulting fibration $f_e$ is (universally) homeomorphic to $f$, but its general fibres are normal for $e \gg 0$. Moreover, if $\widebar{\eta}$ is the geometric generic point of $Z$ and $\widebar{W} \subset X_{\widebar{\eta}}$ is a log canonical centre, there exists a log canonical centre $W \subset X^{(e)}$ which reduces to $\widebar{W}$.

\begin{theorem}[{Existence of geometric $(\ast)$-modifications, see \autoref{t-property star modifications v2.0}}]
Assume the log MMP and inversion of adjunction in dimension $n$, and the existence of log resolutions in dimensions $n$ and $n-1$.
Let $f\colon X \rightarrow Z$ be a fibration from a normal projective $\Q$-factorial variety $X$ of dimension $n$ onto a normal projective curve $Z$ defined over a perfect field of characteristic $p>0$, such that $X_{\widebar{\eta}}$ is normal, where $\widebar{\eta}$ is the geometric generic point of $Z$.
Then, for $e \gg 0$, there exists a {$(\ast)$-modification} of $X^{(e)}$.
\end{theorem}

For fibrations satisfying Property $(\ast)$, over fields of characteristic $0$, we can compare the moduli part with the canonical divisor of the \emph{foliation} induced by the fibration.
Over fields of characteristic $0$, to each fibration $f \colon X \to Z$ between normal varieties, we can associate a foliation $\cF$, defined as the saturation of the kernel of the differential map $df \colon T_X \to f^*T_Z$. If $f$ is equidimensional, the canonical divisor of the foliation, $K_{\cF}$, can be explicitly computed as ${K_X-f^*K_Z-R(f)}$, where $R(f)$ is the \emph{ramification divisor}, supported on fibres that have multiplicity at least $2$.
The moduli part, under Property $(\ast)$ assumptions, has a similar description.
On the other hand, over fields of positive characteristic, the foliation $\cF$ could behave very differently. For example, if $f$ is not \emph{separable}, the rank of $\cF$ is bigger than expected.
Moreover, even if we require the fibration $f$ to be separable and to have normal general fibres, the divisor $K_{\cF}$ has a different description due to the presence of possible \emph{wild components}, i.e.\ vertical divisors whose multiplicity is divisible by the characteristic.
All in all, we prove that, over perfect fields of positive characteristic, $$K_{\cF}=K_X-f^*K_Z -R(f)-W(f),$$ where $R(f)$ is the ramification divisor defined as in the characteristic $0$ case, while $W(f)$ is an effective divisor supported on the wild components (see \autoref{t-canonical foliation}).
Therefore, even if the moduli part does not always coincide with the canonical divisor of the foliation, under Property $(\ast)$ assumptions, we have a good control of the difference between the two.

In \cite{positivity}, positivity of the moduli part then follows from the the MMP for \emph{algebraically integrable} foliations, in particular from the Cone theorem \cite[Theorem 3.9]{positivity}.
Over fields of positive characteristic this latter result is known to fail (see \cite{counterexamples_Fabio}).
To overcome this issue, we exploit a correspondence between foliations and purely inseparable maps in positive characteristic (\cite[Proposition 2.9]{LMMP}). In fact, since the differential of the Frobenius morphism vanishes, purely inseparable maps define foliations as the kernel of their differential.
Analysing this correspondence in the diagram \autoref{s-intro fibres}, we relate the canonical divisor of $X^{(e)}$ to the moduli part of $f$ and then use the ``standard'' MMP on $X^{(e)}$ to show positivity of the moduli part.
The next example shows this idea.

\begin{example}
    Let $X$ be the variety defined by $y^2z^2 s-x^4 s-x^2y^2t-z^4 t=0$ inside $\bP^2_{[x:y:z]}\times \bP^1_{[s:t]}$ over a perfect field of characteristic $p\neq 2$ and let $f\colon X \to \bP^1$ be the induced projection.
    Let $\cF$ be the foliation induced by $f$, then
    \[
        K_X= \cO_{\bP^2\times\bP^1}(1,-1)|_X \quad \text{and} \quad K_{\cF}= \cO_{\bP^2\times\bP^1}(1,1)|_X.
    \]
    Now, let us perform a Frobenius base change as in diagram \autoref{construction}: the resulting variety $X^{(e)}$ is defined by $y^2z^2 \sigma^{p^e}-x^4\sigma^{p^e}-x^2y^2\tau^{p^e}-\tau^{p^e}$ inside $\bP^2_{[x:y:z]}\times \bP^1_{[\sigma:\tau]}$, where $t= \tau^{p^e}$ and $s= \sigma^{p^e}$. Therefore,
    \[
    K_{X^{(e)}}= \cO_{\bP^2\times\bP^1}(1, p^e-2)|_X.
    \]
    We observe that the divisor $K_X$ is not nef, while both $K_{\cF}$ and $K_{X^{(e)}}$ are.
\begin{remark}
    In fact, when performing a Frobenius base change, for $e \gg 0$ the positivity of $K_{X^{(e)}}$ reflects the positivity of $K_{\cF}$ (see \autoref{t-formulina_general}).
\end{remark}
    \end{example}

\begin{theorem}[{See \autoref{r-formulina}}]
Assume inversion of adjunction in dimension $n$, and the existence of log resolutions in dimension $d$.
Let $f \colon X \to Z$ be a separable equidimensional fibration between normal varieties such that $\dim(X)-\dim(Z)=d$. Let $B$ be an effective $\Q$-divisor such that $(X/Z, B)$ satisfies Property $(\ast)$ and let $M_X$ be the associated moduli part.
Let $\alpha_e \colon X \to X^{(e)}$ be the purely inseparable morphism such that $X \xrightarrow{\alpha_e} X^{(e)} \to X$ is $F^e$.
Then,
\[
    \alpha_e^*K_{X^{(e)}} = (p^e-1)(M_X-B^h) + K_X - W_e,
\]
where $W_e$ is an effective divisor supported on the wild components of $f$.
\end{theorem}

As a corollary of the main result \autoref{t-goal_intro}, we obtain a canonical bundle formula in the $K$-trivial case in a manner similar to \cite[Theorem 1.3]{positivity}.

\begin{theorem}[see \autoref{t-ftrivial}]
    Assume the log MMP, inversion of adjunction and the existence of log resolutions in dimension up to $n$.
    Let $f\colon X \rightarrow Z$ be a fibration from a normal projective $\Q$-factorial variety $X$ of dimension $n$ onto a normal projective curve $Z$ defined over a perfect field of characteristic $p>2$, and let $(X, B)$ be a dlt pair with $B \geq 0$ such that the pair induced on a general fibre is log canonical.
    Assume that $K_X+B \sim_{\Q} f^*L$ for some $\Q$-Cartier $\Q$-divisor $L$ on $Z$.
    Let $M_Z:= L-(K_Z+B_Z)$, where $B_Z$ is the discriminant part of $(X/Z, B)$.
    Then, $M_Z$ is nef.
\end{theorem}

If $X$ is a threefold over an algebraically closed field of characteristic $p>5$, we know the existence of log resolutions and that we can run the MMP for log canonical pairs, so we obtain the following corollary.

\begin{corollary}[see \autoref{c-3folds}]
    Let $f\colon X \rightarrow Z$ be a fibration from a normal projective $\Q$-factorial variety $X$ of dimension $3$ onto a normal projective curve $Z$ over an algebraically closed field of characteristic $p>5$.
    Let $(X, B)$ be a dlt pair with $B \geq 0$ and such that the pair induced on a general fibre is log canonical.
    Suppose that $K_X+B$ is $f$-nef.
    Then, there exist a pair $(Y, C)$ which satisfies {Property $(\ast)$}, with $C \geq 0$, and a commutative diagram
    \[
    \xymatrix{
    Y \ar@{-->}[r]^{b} \ar[dr]_{g} & X \ar[d]^f \\
     & Z,
    }
    \]
    with $b$ birational and such that
    \begin{itemize}
    \item[(i)] $(X/Z, B)$ and $(Y/Z, C)$ are crepant over the generic point of $Z$ (see \autoref{d-generically crepant});
    \item[(ii)] the moduli part $M_Y$ of $(Y/Z, C)$ is nef.
    \end{itemize}
    Moreover, if $K_X+B \sim_{\Q} f^*L$ for some $\Q$-Cartier $\Q$-divisor $L$ on $Z$, define $M_Z:= L-(K_Z+B_Z)$, where $B_Z$ is the discriminant part of $(X/Z, B)$.
    Then $M_Z$ is nef.
\end{corollary}

In \cite[Proposition 3.2]{Jakub}, the author proves a weak canonical bundle formula for fibrations of relative dimension $1$ with smooth log canonical fibres.
This result, together with the above \autoref{c-3folds}, completes the picture in dimension $3$ for fibrations with log canonical general fibres over algebraically closed fields of characteristic $p>5$.

\subsection*{Acknowledgements}
I would like to thank my PhD advisor Paolo Cascini for suggesting the problem, for his guidance throughout and for his indispensable support.
I would like to thank Jefferson Baudin, Fabio Bernasconi, Aurore Boitrel, Federico Bongiorno, Iacopo Brivio, Ivan Cheltsov, Samuele Ciprietti, Enrica Floris, Przemysław Grabowski, Martin Ortiz, Quentin Posva, Calum Spicer, Stefania Vassiliadis, Pascale Voegtli, Jakub Witaszek and the anonymous referees for very helpful discussions and comments on earlier drafts.
I thank the LSGNT and Imperial College London for providing a great environment for discussions.
This work was supported by the Engineering and Physical Sciences Research Council [EP/S021590/1], the EPSRC Centre for Doctoral Training in Geometry and Number Theory (The London School of Geometry and Number Theory), University College London.
I would also like to thank the NCTS, National Centre for Theoretical Sciences Mathematics division, for their support during my stay in Taipei.

\subsection*{Statements and declarations} 
The corresponding author states that there is no conflict of interest.
Data sharing not applicable to this article as no datasets were generated or analysed during the current study.

\section{Notation and conventions}
\begin{itemize}
    \item We fix $k$ to be a perfect field of characteristic $p>0$, unless otherwise stated.
    \item A variety $X$ over $k$ is an integral and separated scheme of finite type over $k$. We will often omit to mention the base field $k$.
    \item We denote the function field of a variety $X$ by $k(X)$.
    \item We denote by $\nu\colon X^\nu\to X$ the normalisation morphism of a variety $X$.
    \item We denote by $X_{\mathrm{red}}$ the reduced subscheme of a scheme $X$.
    \item If $X$ is a variety, by a \textit{big open subset} $U \subseteq X$, we mean an open dense subset $U \subseteq X$ such that $\codim(X \setminus U) \geq 2$.
    \item Given a $\Q$-divisor $D$ on $X$, its support $\Supp(D)$ is the union of all its prime components.
    \item Given a $\Q$-divisor $D=\sum_{i=1}^Nd_iD_i$, with $d_i \in \Q$ and $D_i$ prime divisors, we define $\lfloor D \rfloor := \sum_{i=1}^N \lfloor d_i \rfloor D_i$.
    \item Let $f\colon X \rightarrow Z$ be a projective morphism between normal varieties over any field.
    It is called a \textit{fibration} if $f_*\cO_X=\cO_Z$.
\end{itemize}

Let $f\colon X \to Z$ be a fibration between normal projective varieties.
\hfill
\begin{itemize}
\item Given a prime divisor $D \subseteq X$, we say $D$ is \textit{horizontal} if $f(D)$ dominates $Z$, we say it is \textit{vertical} otherwise.
\item Given a $\Q$-divisor $D$ on $X$, we can decompose it into its horizontal and vertical components, denoted by $D^h$ and $D^v$, respectively, so that $D= D^h +D^v$.
\item Given a curve $\xi \subseteq X$, we say $\xi$ is \textit{horizontal} if $f(\xi)$ is a curve, we say it is \textit{vertical} otherwise.
\item If $Z$ is a curve and $W \subset X$ is a subvariety, we say $W$ is \textit{horizontal} if $f(W)=Z$, we say it is \textit{vertical} otherwise.
\item If $\eta$ is the generic point of $Z$, we denote by $\widebar{\eta}$ its geometric generic point and by $X_{\eta}$ (resp.\ $X_{\widebar{\eta}}$) the generic (resp.\ geometric generic) fibre of $f$. Note that $X_{\eta}$ is generally defined over an imperfect field.
\item If $z \in Z$ is a point, we denote the fibre over it $X_z:=f^{-1}(z)$.
\item Let $D$ be a $\bQ$-divisor on $X$, and let $z,\eta\in Z$ denote a general point and the generic point, respectively. We denote by $D_{\eta}$ the base change of $D$ to the generic fibre and $D_{\widebar{\eta}}$ the base change to the geometric generic fibre. If $X_z$ is normal, $D$ is $\bQ$-Cartier along any codimension $1$ point of $X_z$, hence the restricted divisor $D_z:= D|_{X_z}$ is well-defined.
\item If $\delta$ is a prime divisor on $Z$ whose preimage under $f$ is of pure codimension $1$ in $X$, we denote by $f^{-1}(\delta)$ the induced divisor on $X$ with its reduced structure.
\end{itemize}

We recall that, when $X$ is a normal projective $\bk$-variety and $L$ is a Weil divisor, $H^0(X,L)$ is a finite dimensional $\bk$-vector space. 
\begin{itemize}
\item We denote by $|L|$ the associated projective space. We can naturally identify $|L|$ as the set of effective Weil divisors $L'$ which are linearly equivalent to $L$, and we refer to $|L|$ as the \textit{linear system associated to $L$}.
\item If $V$ is a subspace of $H^0(X,L)$, we denote by $|V|\subseteq |L|$ the natural associated projective subspace, which is called the \textit{linear subsystem associated to $V$}. 
\item This notation extends naturally to $\bQ$-divisors: given a $\bQ$-divisor $L$ we denote by $|L|_{\bQ}$ the set of all effective $\bQ$-divisors $L'$ such that $L'\sim_{\bQ}L$.
\item We say that a collection of subspaces $V_{\bullet}:=(V_m \subseteq H^0(X,mL))_{m \in \N}$ forms a \textbf{$\Q$-linear subsystem} if $V_m \cdot V_{m'} := \{ \sigma \tau \text{ s.t. } \sigma \in V_m \text{ and } \tau \in V_{m'} \}\subseteq V_{mm'}$.
\item We say that a linear subsystem $|V| \subseteq |L|$ is \textit{base point free} if, for every $x \in X$ there is $s \in V$ such that $s(x)\neq 0$. 
\item We define the \textit{base locus} of $|V|$ as the set of points $x \in X$ such that $s(x)=0$ for every $s \in V$ and we denote it by $\Bs(V)$. If $V$ is the complete linear system $H^0(X,L)$, we denote it simply by $\Bs(L)$.
\end{itemize}

\subsection{Singularities in birational geometry}

Singularities arise naturally in birational geometry because the class of smooth varieties is not stable under the birational operations of the Minimal Model Program.
The main classes of singularities studied in birational geometry are classified according to the discrepancy that appears under birational maps. For a more detailed discussion, see \cite[Chapter 2]{KM}.

\begin{definition}
A pair $(X,B)$ is the data of a normal variety $X$ and a $\Q$-divisor $B$ on $X$ such that $K_X+B$ is $\Q$-Cartier.
Let $(X, B)$ be a pair. Given a proper birational morphism from a normal variety $\mu\colon  Y \rightarrow X$, we denote by $\Exc(\mu)$ the union of all exceptional divisors of $\mu$.
We write:
\[
K_Y + \mu^{-1}_*B = \mu^*(K_X+B) + \sum_{i=1}^N a(E_i, X, B) E_i,
\]
where $\mu^{-1}_*B$ is the strict transform of $B$ and the $E_i$'s are all the prime exceptional divisors in $\Exc(\mu)$.
The quantity  $a(E, X, B)$ is called the \textbf{discrepancy} of $E$ with respect to $(X, B)$.
The divisor $E$ is called a \textbf{place} over $X$, while its image in $X$ is called the \textbf{centre} of $E$.
We say $(X, B)$ is:
\begin{itemize}
\item[•] \textbf{terminal} if $a(E, X, B)>0$ for all possible exceptional divisors $E$ over $X$;
\item[•] \textbf{canonical} if $a(E, X, B)\geq 0$ for all possible exceptional divisors $E$ over $X$;
\item[•] \textbf{Kawamata log terminal} or \textbf{klt} if $a(E, X, B)>-1$ for all possible exceptional divisors $E$ over $X$ and $\lfloor B \rfloor \leq 0$;
\item[•] \textbf{divisorial log terminal} or \textbf{dlt} if there exists a dense open subset $U \subseteq X$ such that $(U, B|_U)$ is log smooth, and $a(E, X,B) >-1$ for every $E$ whose centre is not contained in $U$;
\item[•] \textbf{log canonical} or \textbf{lc} if $a(E, X, B)\geq -1$ for all possible exceptional divisors $E$ over $X$.
\end{itemize}
\end{definition}

Sometimes it is useful to consider a weakening of the notion of log canonical singularities for varieties that are not necessarily normal (for a more detailed discussion see \cite[Chapter 5]{singularities}).

\begin{definition}
Let $X$ be a variety, we say it is \textbf{demi-normal} if it is $S_2$ and its codimension one points are either regular or nodal.
Note that we can define the canonical divisor $K_X$ also for demi-normal varieties.
Given an effective $\Q$-divisor $B$ on a demi-normal variety $X$ such that $K_X+B$ is $\Q$-Cartier, we say $(X,B)$ is \textbf{semi-log canonical} or \textbf{slc} if $(X^{\nu}, B^{\nu})$ is log canonical, where $\nu\colon X^{\nu} \rightarrow X$ is the normalisation morphism and $B^{\nu}$ is defined by log pullback, i.e.\ $K_{X^{\nu}}+B^{\nu}= \nu^*(K_X+B)$.
\end{definition}

\subsection{Resolutions of singularities}

The existence of log resolutions is proven in characteristic $0$ in all dimensions by Hironaka, whereas in positive characteristic it is still open in general. 

\begin{conjecture} \label{c-resolutions}
When we say that we assume \emph{the existence of log resolutions in dimension $n$}, we mean that, if $X$ is a quasi-projective variety of dimension $n$ defined over a perfect field $k$ and $B \geq 0$ is a $\Q$-divisor on it, then there exists a sequence of birational maps
    \[
    Y_N \to ...\to Y_1 \to Y_0=X
    \]
such that each $Y_j$ is the blow-up of $Y_{j-1}$ at a smooth centre, $Y_N$ is smooth and the union of the strict transform of $B$ in $Y_N$ and the exceptional divisors of $Y_N \to X$ is simple normal crossing. Moreover, the map $Y_N \to Y_0$ is an isomorphism outside the locus where $(X,B)$ is log smooth.
\end{conjecture}

\begin{theorem}[{\cite{Lipman, resolutionsAbhy, resolutionsCut, resolutions1, resolutions2}}] \label{r-resolutions}
        If $X$ is a variety of dimension at most $3$ over an algebraically closed field of characteristic $p>0$, then \autoref{c-resolutions} holds.
\end{theorem}

\subsection{Minimal Model Program in positive characteristic}

\begin{conjecture} \label{c-MMP}
When we say that we assume \emph{the LMMP in dimensions $n, m$}, we mean that if $(X, B)$ is a quasi-projective log canonical pair of dimension $n$ with $B \geq 0$, and $f \colon X \to Z$ is a projective morphism to a quasi-projective variety $Z$ of dimension $m$, then we can run the $K_X+B$ Minimal Model Program over $Z$.
More precisely, we assume that there exists a finite sequence of divisorial contractions and flips over $Z$ which terminates either with a Mori Fibre Space over $Z$, or with a minimal model, i.e.\ a birational model $(Y, C)$ over $Z$ such that $K_Y+C$ is nef over $Z$.
When we say that we assume \emph{the dlt LMMP in dimensions $n, m$}, we mean that we are assuming the LMMP in dimensions $n, m$, but only starting from pairs $(X,B)$ that are dlt.
When we say that we assume \emph{the birational dlt LMMP in dimension $n$}, we mean that we are assuming the dlt LMMP in dimensions $n, m=n$ and that all $f$-exceptional divisors are contained in $\Supp(B)$.
\end{conjecture}

\begin{theorem} \label{t-MMP}
    Let $k$ be a perfect field of characteristic $p>0$.
    \begin{itemize}
        \item[(i)] If $(X,B)$ is a two-dimensional log canonical pair over $k$, then the LMMP of dimensions $2, m$ holds, where $m \leq 2$ (\cite{MMPsurfaces}).
        \item[(ii)] If $p>5$ and $(X,B)$ is a three-dimensional log canonical pair over $k$, then the LMMP of dimensions $3, m$ holds, where $m \leq 3$ (\cite{HX, Bir, BW, MMPlc}).
        \item[(iii)] If $p>3$ and $(X,B)$ is a three-dimensional dlt pair over $k$, then the LMMP of dimensions $3, m$ holds, where $m \leq 3$ (\cite[Theorem 1.2]{low_char_MMP2}).
        \item[(iv)] For any $p>0$, if $(X,B)$ is a three-dimensional dlt pair, then the birational dlt LMMP holds (\cite[Theorem 1.1]{low_char_MMP}).
        \item[(v)] If $p>5$ and $(X,B)$ is a four-dimensional dlt pair with standard coefficients, then the birational dlt LMMP holds provided that log resolutions exist (\cite[Theorem 1.1]{mixedMMP}).
    \end{itemize}
\end{theorem}

\begin{conjecture} \label{c-inversion}
When we say that we assume \emph{inversion of adjunction in dimension $n$} we mean that, if $(X, B+S)$ is a pair, where $X$ has dimension $n$ and $S$ is a prime divisor, then $(X, B+S)$ is log canonical around $S$ if and only if the pair induced by adjunction $(S^{\nu}, B_{S^{\nu}})$ is log canonical.
\end{conjecture}

\begin{theorem} \cite[Corollary 10.1]{7authors} \label{r-inversion}
If $k$ is a perfect field of characteristic $p>5$ and $(X,B+S)$ is a three-dimensional log canonical pair over $k$, where $S$ is a prime divisor, then \autoref{c-inversion} holds.
\end{theorem}

\begin{remark} 
In \cite[Corollary 1.5]{low_char_MMP} a weaker version of inversion of adjunction (plt) is proven in dimension three also in low characteristics, and in \cite[Corollary 4.8]{mixedMMP} it is proven in dimension four for pairs with standard coefficients and assuming the existence of log resolutions.
However, this is not enough for our purposes. 
The only place where we explicitly use inversion of adjunction is \autoref{p-well-defined}, to prove that the discriminant and the moduli parts are well-defined, there inversion of adjunction is applied to all general fibres.
In fact, even if we start from a pair $(X, B)$ and a fibration $f\colon X \to Z$ such that the geometric generic fibre $X_{\wb{\eta}}$ is normal and the pair induced on it $(X_{\wb{\eta}},B_{\wb{\eta}})$ is klt, in the proof, we need to modify it so that the geometric generic fibre becomes dlt.
Already in this latter case the plt inversion of adjunction is not enough to conclude.
\end{remark}

\section{Singularities of fibrations}

In characteristic $0$, given a fibration, it is automatic that the generic fibre is geometrically reduced.
In positive characteristic, this is no longer true and it is equivalent to asking that the fibration is \emph{separable}.

\begin{definition}
Let $K \subseteq L$ be a field extension. It is called \textbf{separable} if there exists a transcendence basis $t_1, ..., t_{\ell}$ such that $L$ is a finite separable extension of $K(t_1,...,t_{\ell})$.\\
Let $f\colon X \rightarrow Z$ be a morphism between integral varieties.
We say that $f$ is \textbf{separable} if the field extension $k(Z) \subseteq k(X)$ is separable; otherwise $f$ is called \textbf{inseparable}.
\end{definition}

The next lemma is well-known, we state it for completeness.

\begin{lemma} \label{l-split extensions} \label{d-degree}
    \begin{itemize}
        \item[(i)] Let $K \subseteq L$ be a field extension, then there is an intermediate field $K \subseteq L' \subseteq L$ such that $K \subseteq L'$ is a separable extension and $L' \subseteq L$ is a purely inseparable finite extension.
        \item[(ii)] Let $\varphi \colon Y \to X$ be a finite morphism of curves. Then there exists $\sigma\colon Y \to X$ separable and $e \geq 0$ such that $\varphi=F^e \circ \sigma = \sigma \circ F^e$. In this case $p^e$ is said to be the \textbf{purely inseparable degree} of $\varphi$.
        \item[(iii)] Let $X$ be a normal projective variety with function field $k(X)$. Let $k(X) \subseteq L$ be a finite extension. Then, up to considering a further purely inseparable extension $L \subseteq L'$, there exist a normal projective variety $Y$, a generically finite separable map $\sigma \colon Y \to X$ and $e \geq 0$ such that the extension induced by $\varphi := \sigma \circ F^e = F^e \circ \sigma$ is exactly $k(X) \subseteq L'$.
    \end{itemize}
\end{lemma}

\begin{proof}
Point (i) follows from \cite[Tag 030K]{stacks}.
Point (ii) follows from \cite[Tag 0CD2]{stacks} and \cite[Tag 0CC7]{stacks}.
As for point (iii): by (i) we can decompose the extension $k(X) \subseteq L$ as a finite separable extension $k(X) \subseteq K'$ followed by a purely inseparable extension $K' \subseteq L$. Up to considering a further purely inseparable extension $L \subseteq L'$, we can assume $L'=K'^{\frac{1}{p^e}}$ for some $e \geq 0$.
The field extension $k(X) \subseteq K'$ induces a separable finite map $U \rightarrow X$. 
By taking a projective completion of $U$, resolving the resulting rational map and taking the normalisation, we obtain a generically finite separable map $\sigma\colon Y \to X$ from a normal projective variety $Y$. The composition $\varphi:= \sigma \circ F^e$ corresponds to the field extension $k(X) \subseteq L'$.
Note that, by \cite[Tag 0CC7]{stacks}, $\varphi= \sigma \circ F^e = F^e \circ \sigma$.
\end{proof}

\begin{example} \label{e-inseparable}
    In general, the condition $f_*\cO_{X} = \cO_{Z}$ is not enough to ensure separability.
    Consider, for example, the threefold $X = V(sx^p+ty^p+z^p) \subset \Pn^{2}_{[x:y:z]} \times \A^{2}_{(s,t)}$.
    Let $f$ be the fibration induced by the natural projection onto $Z:= \A^{2}_{(s,t)}$.
    Then $f$ satisfies $f_*\cO_{X} = \cO_{Z}$, but it is not separable.
\end{example}

\begin{proposition}[{\cite[Proposition 2.15, Chapter 3]{Liu}}] \label{p-reducedness}
Let $K$ be a field. A variety $X$ over $\Spec(K)$ is geometrically reduced if and only if ${f\colon X \to \Spec(K)}$ is separable.
\end{proposition}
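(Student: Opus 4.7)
The plan is to reduce the geometric statement to a purely algebraic one about the function field extension $K \subseteq K(X)$, and then invoke MacLane's criterion characterizing separably generated field extensions via reducedness of the tensor product with the perfect (or algebraic) closure.

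First I would pick an affine open $U = \Spec A \subseteq X$, where $A$ is a $K$-domain with fraction field $K(X)$. Since reducedness is a local property and every point of $X\times_K \bar K$ lies in some $U\times_K\bar K = \Spec(A\otimes_K\bar K)$, geometric reducedness of $X$ is equivalent to $A\otimes_K\bar K$ being reduced. Flatness of $K\hookrightarrow\bar K$ gives an injection $A\otimes_K\bar K\hookrightarrow K(X)\otimes_K\bar K$; conversely $K(X)\otimes_K\bar K$ is the localization $(A\setminus\{0\})^{-1}(A\otimes_K\bar K)$. Hence these two rings are reduced together, and the question reduces to whether $K(X)\otimes_K\bar K$ is reduced if and only if $K(X)/K$ admits a separating transcendence basis in the sense of the definition.

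Next I would invoke (or prove) MacLane's criterion: for a finitely generated extension $L/K$ in characteristic $p>0$, existence of a separating transcendence basis is equivalent to $L\otimes_K K^{1/p}$ being reduced, equivalently to $L\otimes_K \bar K$ being reduced; in characteristic zero both sides hold trivially. For the implication (geometrically reduced $\Rightarrow$ separable), $K(X)\otimes_K K^{1/p}$ sits inside $K(X)\otimes_K\bar K$ and so is reduced, whence MacLane's criterion applies. For the converse, choose a separating transcendence basis $t_1,\ldots,t_\ell$ and a primitive element $\theta$ with separable minimal polynomial $P(x)\in K(t_1,\ldots,t_\ell)[x]$; since $P$ stays separable after any field extension of its coefficient field, tensoring $K(X)=K(t_1,\ldots,t_\ell)[x]/(P)$ with $\bar K$ produces a finite étale algebra over a localization of $\bar K[t_1,\ldots,t_\ell]$, which is reduced.

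The main obstacle is MacLane's criterion itself: the geometric-to-algebraic reduction is routine, but characterizing the separably generated condition via reducedness after base change to $K^{1/p}$ requires some care, typically an induction on the transcendence degree combined with the primitive element theorem and an analysis of how the Frobenius acts on relations among generators. Once that purely field-theoretic input is granted, the proposition follows immediately from the two paragraphs above.
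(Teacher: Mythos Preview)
Your argument is correct and follows the standard route: localize to an affine chart, reduce the geometric reducedness of $X$ to reducedness of $K(X)\otimes_K\bar K$ via the flatness of $K\hookrightarrow\bar K$ and localization, and then invoke MacLane's criterion for separably generated extensions. The only minor quibble is that the equivalence between $K(X)\otimes_K K^{1/p}$ reduced and $K(X)\otimes_K\bar K$ reduced deserves a word (one direction is immediate; the other uses that $\bar K$ is a direct limit of iterated $p$-th root extensions followed by separable ones), but this is part of the standard package around MacLane's theorem.

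There is, however, nothing in the paper to compare your proof against: the proposition is stated with a citation to Liu and no proof is given in the paper itself. You have simply supplied the argument that the paper imports from the literature.
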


\begin{remark}
In particular, a fibration $f\colon X \to Z$ is separable if and only if the geometric generic fibre $X_{\Bar{\eta}}$ is reduced.
\end{remark}

When $Z$ is a curve, a theorem of MacLane \cite{MacLane} allows us to compare the notion of separability of a surjective morphism with its Stein factorisation.
In this case it is therefore easier to check this condition.
We write here a version of it restated in geometric terms.

\begin{theorem}[{\cite[Corollary 2.5]{non-reduced}}] \label{t-curve separable fibration}
Let $f\colon X \rightarrow Z$ be a fibration onto a curve $Z$.
Then $f$ is separable.
\end{theorem}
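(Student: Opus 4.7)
The plan is to apply MacLane's criterion: by Proposition \ref{p-reducedness}, separability of $f$ is equivalent to the generic fibre $X_\eta$ being geometrically reduced over $K(Z)$, equivalently to $K(X) \otimes_{K(Z)} K(Z)^{1/p}$ being reduced. I will combine the hypothesis $f_*\Ox{X} = \Ox{Z}$, which forces $K(Z)$ to be algebraically closed in $K(X)$, with the curve assumption, which pins $[K(Z)^{1/p}:K(Z)]$ at $p$, to rule out any nilpotents in this tensor product.

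First, I would check that $K(Z)$ is algebraically closed in $K(X)$. Since $X$ is normal, $\Ox{X}$ and hence its pushforward $f_*\Ox{X}$ are integrally closed in the constant sheaf $K(X)$ on $Z$. Therefore the integral closure of $\Ox{Z}$ in $K(X)$ is contained in $f_*\Ox{X} = \Ox{Z}$, and hence coincides with $\Ox{Z}$. Localising at the generic point of $Z$ gives the claim at the level of function fields.

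Next, I would argue by contradiction. Assume $K(X)/K(Z)$ is not separable. By MacLane's theorem \cite{MacLane}, $K(X)$ and $K(Z)^{1/p}$ then fail to be linearly disjoint over $K(Z)$. Because $Z$ is a curve over the perfect base field, $K(Z)$ has imperfection degree $1$, so $[K(Z)^{1/p}:K(Z)] = p$ and any element $\alpha \in K(Z)^{1/p} \setminus K(Z)$ generates this extension. For such a purely inseparable extension of degree $p$, failure of linear disjointness is equivalent to the containment $K(Z)^{1/p} \subseteq K(X)$; but then $\alpha$ would be an element of $K(X)$ algebraic over $K(Z)$ (as a root of $x^p - \alpha^p$) without lying in $K(Z)$, contradicting the first step.

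The delicate point is the reduction of MacLane's linear disjointness condition to the clean containment $K(Z)^{1/p} \subseteq K(X)$: this uses crucially that the purely inseparable extension $K(Z)^{1/p}/K(Z)$ has degree exactly $p$, which is precisely where the curve hypothesis enters. Over higher-dimensional $Z$ the imperfection degree grows and $K(Z)^{1/p}/K(Z)$ is no longer cyclic of degree $p$, so the strategy cannot be adapted verbatim — consistent with the corresponding statement failing in higher dimensions.
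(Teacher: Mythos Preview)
The paper does not supply its own proof of this theorem; it is recorded as a citation to \cite[corollary 2.5]{non-reduced} and, in the preceding sentence, attributed to MacLane's theorem. Your argument is correct and is exactly the standard route: the hypothesis $f_*\Ox{X}=\Ox{Z}$ together with normality of $X$ forces $K(Z)$ to be algebraically closed in $K(X)$; MacLane's criterion plus the fact that $[K(Z)^{1/p}:K(Z)]=p$ for a curve over a perfect field then reduces non-separability to the containment $K(Z)^{1/p}\subseteq K(X)$, yielding the contradiction. This is the argument the paper is pointing to, so there is nothing to contrast.
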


\begin{remark} \label{r-generic smoothness fails}
    Even if a fibration is separable, its fibres may be highly singular.
    For example, in characteristics $2$ and $3$, there exist smooth surfaces equipped with a fibration whose general fibre is a cuspidal curve (quasi-elliptic fibrations).
    Separable {Calabi--Yau} fibrations of relative dimension $1$ have normal general fibre for $p>3$ (\cite[Corollary 1.8]{LMMP}), whereas in higher dimension it is not even known if the same holds for $p \gg 0$.
\end{remark}

\subsection{Log canonical singularities in families} \label{s-MMP_sing_families}

Over fields of positive characteristic, heuristically, the generic fibre of a fibration $f\colon X \rightarrow Z$ reflects properties of $X$, while the geometric generic fibre is strictly related to the general fibres of $f$.
In this section, we make this philosophy more precise, in particular studying the property of being log canonical.

\begin{definition} \label{d-GGLC}
We denote by $(X/Z, B)$ the data of a a pair $(X, B)$ and a fibration between normal varieties $f\colon X \rightarrow Z$.

We say $(X/Z, B)$ is \textbf{generically log canonical} or \textbf{GLC} if the pair $(X_{\eta}, B_{\eta})$ is log canonical, where $\eta$ is the generic point of $Z$. 
We say $(X/Z, B)$ is \textbf{geometrically generically log canonical} or \textbf{GGLC} if $X_{\wb{\eta}}$ is normal and the pair $(X_{\widebar{\eta}}, B_{\widebar{\eta}})$ is log canonical.
\end{definition}

\begin{remark}
In particular, given a fibration $f\colon X \rightarrow Z$ between normal varieties, if there exists a $\Q$-divisor $B$ on $X$ such that $(X/Z, B)$ is GGLC, then $f$ is separable.
\end{remark}

\begin{remark}
    Over fields of characteristic $0$, the notions of GLC and GGLC coincide.
\end{remark}

\begin{definition}
    Let $K$ be a field and $\widebar{K}$ its algebraic closure. Let $K \subseteq L \subseteq \widebar{K}$ be a finite field extension of $K$.
    Let $X$ be a variety over $\widebar{K}$. We say that $X$ is \textbf{defined over $L$} if there exists a variety $X_L$ over $L$ such that $X_L \times_{L} \widebar{K} = X$.
\end{definition}

\begin{lemma} \label{l-GGLC normal fibre}
Let $f\colon X \rightarrow Z$ be a fibration between normal varieties over $k$.
Assume that $B$ is a $\Q$-divisor on $X$ such that $B^h \geq 0$ and $K_X+B$ is $\Q$-Cartier.
Suppose that the geometric generic fibre $X_{\widebar{\eta}}$ is integral. Let ${B_{\widebar{\eta}}}^{\nu}$ be the boundary divisor on ${X_{\widebar{\eta}}}^{\nu}$ defined by restriction.
Assume that $({X_{\widebar{\eta}}}^{\nu}, {B_{\widebar{\eta}}}^{\nu})$ is log canonical, and that one of the following conditions holds:
\begin{itemize}
    \item[(a)] $\mathrm{char}(k)=p>2$, or
    \item[(b)] $\mathrm{char}(k)=2$ and ${X_{\widebar{\eta}}}^{\nu} \to X_{\wb{\eta}}$ is separable.
\end{itemize}
Then $X_{\widebar{\eta}}$ is normal. In particular, $(X/Z, B)$ is GGLC.
\end{lemma}

\begin{proof}
The pair $({X_{\widebar{\eta}}}, {B_{\widebar{\eta}}})$ is slc.
In fact, the $S_2$ property is invariant by flat base change. 
Moreover, if $X_{\widebar{\eta}}$ had singularities worse than nodal in codimension $1$, ${B_{\widebar{\eta}}}^{\nu}$ would have coefficients strictly bigger than $1$ coming from the conductor over those singularities, contradicting the log canonical assumption.

Furthermore, the normalisation of $X_{\wb{\eta}}$ is a universal homeomorphism by \cite[Lemma 2.2]{pi_basechange}, therefore, if we are in cases (a) or (b), by \cite[Proposition 3.1.12]{nodes} nodal singularities cannot appear, whence $X_{\wb{\eta}}$ must be already normal.
\end{proof}


\begin{example}
In characteristic $2$, there exist nodal surface singularities whose normalisation is purely inseparable. An example is $X= V(zx^2+y^2) \subseteq \A^3_{(x,y,z)}$ (see \cite[Example 3.1.9]{nodes}).
\end{example}

\begin{proposition}[{\cite[Proposition 2.1, Lemma 2.2]{LMMP}}] \label{p-geometric vs general}
Let $f\colon X \rightarrow Z$ be a morphism of varieties.
Then the geometric generic fibre is normal (resp.\ regular, reduced) if and only if a general fibre is normal (resp.\ regular, reduced).
Let $Y \rightarrow X$ be the normalisation of $X$. 
If for a general point $z \in Z$, $Y_z$ is normal, then $Y_z$ is the normalisation of $X_z$.
\end{proposition}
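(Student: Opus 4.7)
The plan is to combine the openness of ``geometrically $\mathcal{P}$'' loci for the fibres of a finite-type morphism with the fact that finite extensions of the perfect field $\mathscr{k}$ are themselves perfect, so that for closed points of $Z$ a property and its geometric counterpart coincide on the fibre. Specifically, I would invoke the constructibility results in the spirit of \cite[EGA IV, 9.9.4 and 12.2.4]{}: for a morphism of finite type with noetherian target, the set of points $z \in Z$ such that $X_z$ is geometrically reduced (resp.\ geometrically normal, resp.\ geometrically regular) is constructible, and since $Z$ is irreducible, this set is non-empty open exactly when it contains the generic point $\eta$.

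For the forward direction I would argue as follows. If the geometric generic fibre $X_{\bar\eta}$ has property $\mathcal{P}$, then the generic fibre $X_\eta$ is geometrically $\mathcal{P}$; the constructible ``geometrically $\mathcal{P}$'' locus in $Z$ thus contains $\eta$, hence a non-empty open subset $U \subseteq Z$. For every $z \in U$ the fibre $X_z$ is geometrically $\mathcal{P}$, and in particular $\mathcal{P}$.

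For the reverse direction, note that for a closed point $z \in Z$ the residue field $\kappa(z)$ is a finite extension of $\mathscr{k}$ and therefore perfect; hence a $\kappa(z)$-variety is reduced (resp.\ normal, resp.\ regular) if and only if it is geometrically so. Thus the assumption that a general closed fibre is $\mathcal{P}$ implies that the constructible locus of $z$ with $X_z$ geometrically $\mathcal{P}$ is dense in $Z$, hence contains a non-empty open subset and in particular $\eta$, so $X_{\bar\eta}$ has property $\mathcal{P}$. This takes care of all three versions.

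For the ``moreover'' part I would use that the normalisation $\nu\colon Y \to X$ of a variety of finite type over $\mathscr{k}$ is finite and birational, and an isomorphism over the normal locus $X^{\rm sm}\subseteq X$, which is open and dense. By generic flatness I can shrink $Z$ so that $\nu^{-1}(X^{\rm sm})\to X^{\rm sm}$ stays an isomorphism fibrewise and $X^{\rm sm}\cap X_z$ is dense in $X_z$; then for $z$ in this dense open, $Y_z \to X_z$ is finite, surjective, and an isomorphism over a dense open of $X_z$. Combining this with the assumption that $Y_z$ is normal, $Y_z \to X_z$ is therefore a finite birational map from a normal scheme, which is the defining property of the normalisation. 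The main obstacle I anticipate is guaranteeing that $X_z$ is itself reduced for general $z$ (so that its normalisation makes sense): I would handle this by applying the first part to deduce reducedness of the general fibre from reducedness of $Y_{\bar\eta}$ (which follows from normality) together with the fact that $Y_z \to X_z$ is surjective and generically an isomorphism, forcing $X_z$ to be generically reduced and, using that nilpotents would have to be supported on the codimension-one complement of $X^{\rm sm}\cap X_z$, reduced after possibly further shrinking $Z$.
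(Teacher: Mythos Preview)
The paper does not supply a proof of this proposition; it is stated with a citation to \cite[proposition 2.1 and lemma 2.2]{LMMP} and no argument follows. Your proposal is therefore not being compared to a proof in the paper but stands as a self-contained sketch, and it is essentially the standard route one expects in the cited reference: constructibility/openness of the geometrically-$\mathcal{P}$ locus from EGA~IV, combined with the observation that over a perfect base field closed points have perfect residue fields, so that $\mathcal{P}$ and geometrically~$\mathcal{P}$ coincide on closed fibres.

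Two remarks on the details. First, for geometric normality and geometric regularity the openness statements in EGA~IV$_3$ (12.1.6, 12.2.4) are formulated for \emph{flat} morphisms of finite presentation; for an arbitrary morphism of varieties you should first pass to a dense open of $Z$ over which $f$ is flat (generic flatness), and only then invoke openness. Your write-up blurs this step. Second, in the ``moreover'' part, the last sentence---arguing that $X_z$ is reduced because nilpotents would be confined to a codimension-one locus---is not a valid argument: a finite surjection from a reduced scheme does not force the target to be reduced, and nilpotents need not respect codimension. The clean fix is simply to note that the normalisation of $X_z$ is, by convention, the normalisation of $(X_z)_{\mathrm{red}}$; since $Y_z \to X_z$ is finite and an isomorphism over a dense open, the induced map $Y_z \to (X_z)_{\mathrm{red}}$ is finite and birational from a normal scheme, hence is the normalisation. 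With these two adjustments your sketch is correct.
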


\begin{lemma} \label{l-conductor is vertical}
Let $f\colon X \rightarrow Z$ be a separable fibration between normal varieties and let $\varphi\colon Z' \rightarrow Z$ be a generically finite map.
Let $Y'$ be the normalisation of the main component of the fibre product $X':= X \times_Z Z'$.
If the geometric generic fibre $X_{\widebar{\eta}}$ is normal, the conductor of $Y' \rightarrow X'$ is vertical.
In particular, $Y'_{\widebar{\eta}}= X_{\widebar{\eta}}$.
\end{lemma}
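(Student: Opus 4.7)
The plan is to argue that the normalisation $Y' \to X'$ is already an isomorphism over the generic point $\eta'$ of $Z'$, so the non-normal locus of $X'$ contains no horizontal component. Fix compatible embeddings $K(Z) \subseteq K(Z') \subseteq \overline{K(Z)}$.

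First I would establish that the generic fibre $X_{\eta'} = X \times_Z \Spec K(Z')$ is normal and integral. Since $X_{\bar\eta} = X_{\eta'} \times_{K(Z')} \overline{K(Z)}$, the natural map $X_{\bar\eta} \to X_{\eta'}$ is faithfully flat, and normality descends along faithfully flat morphisms, so $X_{\eta'}$ inherits normality from $X_{\bar\eta}$. Because $f$ is separable, proposition \ref{p-reducedness} gives that $X_\eta$ is geometrically reduced, so $X_{\eta'}$ is reduced; moreover $X_{\bar\eta}$ is connected (being a geometric generic fibre of a fibration, which satisfies $f_*\Ox{X} = \Ox{Z}$) and normal, hence integral, so the intermediate $X_{\eta'}$ is also connected and therefore integral. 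This pins down the "main component" of $X'$: it is the unique component dominating $Z'$, and its generic fibre over $Z'$ is precisely $X_{\eta'}$.

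Next, since $X_{\eta'}$ is already normal, the normalisation $Y' \to X'$ is an isomorphism over $\eta'$, so $Y'_{\eta'} = X'_{\eta'} = X_{\eta'}$. The conductor of $Y' \to X'$ is supported on the locus where this morphism fails to be an isomorphism; since we have just shown that this locus does not meet the fibre over $\eta'$, no horizontal prime divisor can lie in it. The conductor is thus vertical with respect to $Y' \to Z'$, and a fortiori with respect to the composition $Y' \to Z$.

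The "in particular" statement follows by base change: tensoring the identity $Y'_{\eta'} = X_{\eta'}$ with $\overline{K(Z)}$ over $K(Z')$ yields $Y'_{\bar\eta} = X_{\eta'} \times_{K(Z')} \overline{K(Z)} = X_{\bar\eta}$. The only delicate step is the first one, where the combination of faithfully flat descent of normality (to move from $\bar\eta$ down to $\eta'$) together with the separability of $f$ (ensuring that $X_{\eta'}$ remains reduced despite a possibly inseparable $\varphi$) is what transfers the geometric normality hypothesis across the finite base change $\varphi$.
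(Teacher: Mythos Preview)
Your argument is correct and in fact cleaner than the paper's. The paper argues via closed fibres: it invokes proposition~\ref{p-geometric vs general} to pass from normality of $X_{\bar\eta}$ to normality of the general closed fibre of $f$, then observes that the general closed fibre of $f'$ agrees with that of $f$, so $Y'\to X'$ is an isomorphism over a dense open of $Z'$. You instead work entirely at the generic point of $Z'$, using faithfully flat descent of normality along $X_{\bar\eta}\to X_{\eta'}$ to conclude directly that $X_{\eta'}$ is normal; the conductor is then vertical because the normalisation is already an isomorphism over $\eta'$. Your route avoids the spreading-out behind proposition~\ref{p-geometric vs general} and does not implicitly need the base field to be perfect, while the paper's route is a bit more hands-on geometrically. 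Both reach the same conclusion; your identification of the main component via integrality of $X_{\eta'}$ is also a nice touch that the paper leaves implicit.
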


\begin{proof}
Note that $X_{\widebar{\eta}}=X'_{\widebar{\eta}}$, therefore $X'_{\eta}$ is geometrically normal. This implies that there exists an open dense subset $U \subseteq Z'$ such that $V:=f'^{-1}(U)$ is normal, whence $Y' \to X'$ is an isomorphism over $V$.
\end{proof}

\begin{lemma} \label{l-spread-out}
Let $f\colon X \rightarrow Z$ be a separable fibration between normal varieties.
Assume that the geometric generic fibre $X_{\widebar{\eta}}$ is normal and let $\widebar{\sigma} \colon \widebar{Y} \rightarrow X_{\widebar{\eta}}$ be a proper birational morphism between normal varieties.
Then, there exist a generically finite map $\varphi\colon Z' \rightarrow Z$ and a proper birational morphism $\sigma\colon Y \rightarrow Y'$, where:
\begin{itemize}
    \item[(i)] $Y'$ is the normalisation of the main component of $X \times_Z Z'$;
    \item[(ii)] $Y_{\widebar{\eta}}= \widebar{Y}$. 
\end{itemize}
If $X$ and $Z$ are projective, we can choose $Z'$ and $Y$ projective.
\end{lemma}

\begin{proof}
There exists $L$, finite extension of $k(Z)$ such that $\widebar{Y}$ and $\widebar{\sigma}$ are defined over $L$.
By ``spreading out techniques'' (see \cite[Proof of Corollary 1.10]{DW} and \cite[Lemma 2.25]{invariance}), there exist $U \subseteq Z$ dense open subset and a finite map $\varphi\colon U' \rightarrow U$ such that, if $W'$ is the normalisation of the main component of $f^{-1}(U) \times_U U'$, there is a proper birational map $s \colon W \rightarrow W'$ with $W_{\widebar{\eta}}= \widebar{Y}$.
If $Z$ is projective, let $\widebar{U}'$ be a projective closure of $U'$ and define $\varphi\colon Z' \to Z$ generically finite, as a resolution of the indeterminacies of $\widebar{U}' \dashrightarrow Z$.
Let $Y'$ be the normalisation of the main component of $X \times_Z Z'$ and $Y''$ a projective closure of $W$; there is an induced rational map $\tau \colon Y'' \dashrightarrow Y'$, which is well-defined over $U'$. We take $\sigma\colon Y \rightarrow Y'$ to be a resolution of indeterminacies of $\tau$ that is an isomorphism over $U'$. By possibly substituting $Y$ with its normalisation, we can also assume it is normal.
\end{proof}

\begin{remark} \label{c-pairs over the ggfibre}
    Let $f\colon X \rightarrow Z$ be a separable fibration between normal varieties.
    Assume that the geometric generic fibre $X_{\widebar{\eta}}$ is normal.
    Let $B \geq 0$ be a $\Q$-divisor on $X$ such that $K_X+B$ is $\Q$-Cartier and let $B_{\wb{\eta}}$ be the induced $\Q$-divisor on $X_{\wb{\eta}}$.
    Let $\wb{Y} \to X_{\wb{\eta}}$ be a birational map, together with a $\Q$-divisor $\wb{C} \geq 0$.
    By \autoref{l-spread-out} and \autoref{l-conductor is vertical}, up to shrinking $Z$, we can construct $Z' \to Z$ and $Y \to X'$ such that $\wb{Y}=Y_{\wb{\eta}'}$. Moreover, up to choosing a further finite cover of $Z$, we can assume that $\wb{C}$ is also defined over $k(Z')$. Therefore, there exists a $\Q$-divisor $C$ on $Y$ such that $(Y_{\wb{\eta}'}, C_{\wb{\eta}'})= (\wb{Y}, \wb{C})$, where $\eta'$ is the generic point of $Z'$.
\end{remark}

\begin{proposition} \label{p-general vs geometric generic lc}
Assume the existence of log resolutions of singularities in dimension $d$.
Let $f\colon X \rightarrow Z$ be a fibration between normal varieties such that $\dim(X) - \dim(Z)=d$.
Let $B \geq 0$ be a $\Q$-divisor such that $K_X+B$ is $\Q$-Cartier. The pair $(X/Z, B)$ is GGLC if and only if:
\begin{itemize}
\item[(i)] a general fibre $X_z$ of $f$ is normal, and
\item[(ii)] the pair $({X_z}, B_{z})$ is log canonical. 
\end{itemize}
\end{proposition}

\begin{proof}
Assume $(X/Z, B)$ is GGLC. Since $X_{\wb{\eta}}$ is normal, by \autoref{p-geometric vs general} this implies that $X_z$ is normal for a general $z \in Z$.
Let $(\wb{Y}, \wb{C})$ be a log resolution of $(X_{\wb{\eta}}, B_{\wb{\eta}})$. By \autoref{c-pairs over the ggfibre}, up to shrinking $Z$, there exists a finite morphism $Z' \to Z$ and a birational map $\sigma\colon Y \to X':= X \times_Z Z'$ such that $Y_{\wb{\eta}'}=\wb{Y}$, where $\eta'$ is the generic point of $Z'$. Moreover, there exists a $\Q$-divisor $C$ on $Y$ such that $C_{\wb{\eta}'}=\wb{C}$.
Define a $\Q$-divisor $B'$ on $X'$ by $K_{X'}+B'= \Phi^*(K_X+B)$. 
Note that the map $\Phi\colon X' \to X$ is an isomorphism when restricted to $X'_{\wb{\eta}'}$ and to a general fibre $X'_{z'}$, so, by \autoref{p-geometric vs general}, $(Y_{z'}, C_{z'})$ is a log resolution of $(X_{z}, B_z)$.
Let $E$ be a horizontal exceptional divisor of $\sigma$.
By construction, $E_{\wb{\eta}}$ is reduced, therefore, by \autoref{p-geometric vs general}, $E_{z'}$ is reduced for $z' \in Z'$ general.
Then, by adjunction, we see that the discrepancies of $E_{z'}$ and $E_{\widebar{\eta}}$ coincide.

As for the converse, by \autoref{p-geometric vs general}, since $X_z$ is normal for a general $z \in Z$, $X_{\wb{\eta}}$ is also normal.
Therefore, we can proceed as above to construct a finite map $Z' \to Z$, a birational morphism $Y \to X':= X \times_Z Z'$ and a $\Q$-divisor $C$ on $Y$ such that $(Y_{\wb{\eta}'}, C_{\wb{\eta}'})$ and $(Y_{z'}, C_{z'})$ are log resolutions of $(X_{\wb{\eta}}, B_{\wb{\eta}})$ and $(X_z, B_z)$, respectively, where $\eta'$ is the generic point of $Z'$, $z' \in Z'$ is a general point and $z \in Z$ is its image.
We then conclude in the same way, by comparing the discrepancies.
\end{proof}

\begin{lemma} \label{l-normal_afterMMP}
    Let $f\colon X \to Z$ be a fibration of normal projective varieties and $B\geq 0$ a $\Q$-divisor on $X$ such that $(X,B)$ is log canonical. Let $\wb{\eta}$ be the geometric generic point of $Z$ and assume that $X_{\wb{\eta}}$ is normal.
    Let $\varphi \colon X \dashrightarrow Y$ be a step of the $(K_X+B)$-MMP over $Z$.
    Then, $Y_{\wb{\eta}}$ is normal.
\end{lemma}
    
\begin{proof}
    Note that $Y_{\wb{\eta}}$ satisfies the $S_2$ property since $Y$ is normal.
    Therefore, we only need to check that $Y_{\wb{\eta}}$ is regular in codimension $1$.
    The locus where $\varphi$ is not an isomorphism has codimension $\geq 2$ in $Y$, and therefore in $Y_{\wb{\eta}}$. In particular, $\varphi$ is an isomorphism around points of codimension $1$ of $Y_{\wb{\eta}}$.
\end{proof}

\section{Foliations and purely inseparable morphisms} \label{s-foliations}

\subsection{Foliations} \label{s-f1}

Every separable fibration $f\colon X \to Z$ defines a \emph{foliation} $\cF$ whose general leaves consist of the fibres of $f$ and of some subvarieties on which $f$ is inseparable.
Here, we give a formula for the canonical divisor of $\cF$ assuming that the general fibres are normal.

\begin{definition}
Let $X$ be a normal variety.
A subsheaf $\cF \subseteq T_X$ is said to be \textbf{saturated} if the quotient $T_X/\cF$ is torsion free.
In characteristic $p>0$, if $\partial$ is a derivation, then the composition of $p$ copies of $\partial$ as a function is also a derivation and it is called the \textbf{$p$-power} of $\partial$.
A \textbf{foliation} on $X$ is a subsheaf of the tangent sheaf, $\mathcal{F} \subseteq T_X$, which is saturated, closed under Lie brackets and under $p$-powers.\\
Let $\omega_{\mathcal{F}}:= \det( \mathcal{F})^*$ be the dual of the reflexified top exterior power of $\mathcal{F}$.
The canonical divisor of a foliation $\cF$ is any Weil divisor $K_{\cF}$ such that $\cO_{X}(K_{\cF}) = \omega_{\cF}$.
\end{definition}


Separable fibrations naturally induce foliations by considering the relative tangent bundle.

\begin{definition} 
    Let $f\colon X \rightarrow Z$ be a separable fibration between normal varieties. It induces a foliation $\cF$ as the saturation of the kernel of $df \colon T_X \to f^*T_Z$.
\end{definition}

\begin{lemma} \label{l-saturated} 
    Let $f\colon X \to Z$ be a separable flat fibration between normal varieties. Then, the kernel of ${df\colon T_X \to f^*T_Z}$ is saturated in $T_X$.
\end{lemma}

\begin{proof}
    Note that $T_X/ \ker(df) \subseteq f^*T_Z$. Since $Z$ is normal, $T_Z$ is torsion free and since $f$ is flat, by \cite[Tag 0AXV]{stacks} $f^*T_Z$ is torsion free. Therefore, $T_X/ \ker(df)$ is torsion free as well.
\end{proof}

\begin{definition} \label{d-tame}
    Let $f\colon X \rightarrow Z$ be a separable equidimensional fibration between normal varieties.
    Let $\delta \subset Z$ be a prime divisor. If $f^*\delta = \sum \ell_D D$, with the sum running over the prime divisors $D$ over $\delta$, then $\ell_D$ is called the \textbf{multiplicity of $D$ with respect to $f$}.
    We define the \textbf{ramification divisor of $f$} to be
    \[
    R(f) := \sum (f^*\delta -f^{-1}(\delta)) = \sum (\ell_D -1)D,
    \]
where the first sum is taken over all prime divisors $\delta$ of $Z$, while the second sum is taken over all vertical prime divisors $D$ on $X$ and $\ell_D$ is their multiplicity with respect to $f$.
If $p$ divides $\ell_D$, we call $D$ a \textbf{wild component}.
If $f\colon X \to Z$ is an equidimensional separable fibration without wild components, we call it a \textbf{tame fibration}.
\end{definition}

\begin{remark}
    If $f\colon X \to Z$ is an equidimensional morphism between normal varieties and $\delta \subseteq Z$ is a Weil divisor, we can define the pull-back $f^*\delta$ even if $\delta$ is not $\Q$-Cartier.
    Indeed, let $U \subseteq Z$ be a big open subset that is regular, then $f^*\delta$ is defined as the closure of $f|_U^*(\delta|_U)$.
\end{remark}

\begin{theorem} \label{t-canonical foliation}
    Let $f\colon X \to Z$ be an equidimensional separable fibration between normal varieties and let $\cF$ be the 
    foliation induced by $f$. Assume that the geometric generic fibre $X_{\widebar{\eta}}$ is normal.
    Then
    \[
K_{\cF}= K_X -f^*K_Z - R(f) - W(f),
    \]
    where $R(f)$ is the ramification divisor and $W(f) \geq 0$ is supported on the wild components.
    More precisely, for every wild component $D$, there exists an integer $a_D \geq 0$ such that $$W(f)= \sum_{D \text{ wild}} (a_D+1)D.$$
\end{theorem}

\begin{proof}
\noindent \textbf{Step 1.}
Since $f$ is equidimensional and $X$ and $Z$ are normal, by \cite[Lemma 2.19]{Lena-Joe} (as a consequence of the flattening lemma \cite[\S 3.3]{flattening}, \cite[Théorème 5.2.2]{RG}), by restricting to big open subsets, we can assume $f$ is a flat morphism between smooth varieties. Therefore, by \autoref{l-saturated}, we can assume $\cF = \ker(df)$.

\noindent \textbf{Step 2.}
    We claim that there exists a dense open subset $U \subseteq X$ such that $U_{\eta}$ is a big open subset of $X_{\eta}$ and the sequence
    \[
    0 \to \cF|_{U} \to T_X|_{U} \to f^*T_Z|_{U} \to 0
    \]
    is exact.
    The sequence $0 \to \cF \to T_X \to f^*T_Z$ is always exact, therefore we only need to show surjectivity at the generic fibre.
    Since $X_{\widebar{\eta}}$ is normal, its singular locus $\Sigma$ has codimension $\geq 2$.
    Let $\Sigma'\subseteq X_{\eta}$ be a subvariety of codimension $\geq 2$ such that $\Sigma'_{\wb{\eta},\mathrm{red}} \supseteq \Sigma$. Let $T$ be the Zariski closure of $\Sigma'$ in $X$, $U:= X \setminus \supp(T)$ and $V:=f(U)$.
    By \cite[Tag 01V8]{stacks}, up to possibly restricting $V$ further (and restricting $U$ accordingly), $f|_U \colon U \to V$ is a flat smooth fibration.
    By \cite[Tag 02G1]{stacks}, the sheaf $T_{U/V}$ is locally free and by \cite[Tag 02K4]{stacks}, the sequence
    \[
        0 \to T_{U/V} \to T_{U} \to f^*T_V \to 0
    \]
    is exact, whence the claim.

\noindent \textbf{Step 3.}
By Step 2, the difference between $K_{\cF}$ and $K_X-f^*K_Z$ is supported on vertical divisors.
Therefore, to conclude, we compute explicitly the image $I$ of $df \colon T_X \to f^*T_Z$ around codimension $1$ points corresponding to prime vertical divisors.
Note that $I$ is locally free around points of codimension $1$.

Let $D$ be a prime vertical divisor. 
By inductively cutting with general hyperplanes of $Z$ which intersect $f(D)$ transversally, we can assume $f\colon X \to Z$ is a fibration onto a curve.
Around a general point of $D$, we can further assume that the fibres of $f$ (with their reduced structure) are smooth. Therefore, we can choose local étale coordinates $x, y_1, ..., y_d$ around $D$ such that $D = V(x)$ and $f$ is the map $(x, y_1, ..., y_d) \mapsto x^{\ell_D}u$, where $u \in k[x,y_1,...,y_d]$ is a unit around $D$ and $\ell_D$ is the multiplicity of $D$ with respect to $f$.
    Let $\partial_x, \partial_{y_1},..., \partial_{y_d}$ be the derivations in $T_X$ corresponding to the coordinate directions. Let $t:= x^{\ell_D}u$ be a local coordinate of $Z$ and $\partial_t$ the corresponding derivation which generates $T_Z$.

    \underline{Tame case:} assume that $p$ does not divide $\ell_D$.
    Then,
    \[
        \begin{cases}
            df(\partial_x) = x^{\ell_D-1}(v_x \partial_t);\\
            df(\partial_{y_i}) = x^{\ell_D}(v_{y_i} \partial_t), 
        \end{cases}
    \]
    where $v_x$ is a unit around $D$ and $v_{y_i}$ is a function.
    In particular, in this étale neighbourhood, $I= x^{\ell_D-1}f^*T_Z$.
    Therefore, at the generic point of $D$, the sequence
    \[
        0 \to \cF \to T_X \to x^{\ell_D-1}f^*T_Z \to 0
    \]
    is exact. By taking determinants, we get that $K_{\cF}= K_X -f^*K_Z-(\ell_D-1)D$ around $D$.

    \underline{Wild case:} assume that $p$ divides $\ell_D$. Then, for $i=0,...,d$, we define $a_i$ and $v_i$ so that
    \[
        \begin{cases}
            df(\partial_x) = x^{\ell_D}(x^{a_0}v_0 \partial_t) \quad \text{if } df(\partial_x) \text{ is not identically } 0;\\
            df(\partial_{y_i}) = x^{\ell_D}(x^{a_i}v_i \partial_t) \quad \text{if } df(\partial_x) \text{ is not identically } 0.
        \end{cases}
    \]
    Therefore, for all $i \in \{ 0,...,d \}$ such that the expression is not identically $0$, $a_i \geq 0$. Moreover, since $f$ is separable, there exists an index $i$ such that $0 \neq v_i$ is a unit around $D$.
    In particular, in this étale neighbourhood, $I = x^{\ell_D+a_D}f^*T_Z$, for some $a_D \geq 0$, and the sequence
    \[
        0 \to \cF \to T_X \to x^{\ell_D+a_D}f^*T_Z \to 0
    \]
    is exact. By taking determinants, we obtain $K_{\cF}= K_X -f^*K_Z-(\ell_D+a_D)D$ around $D$.
\end{proof}

\begin{remark}
The formula in \autoref{t-canonical foliation} is well-known over fields of characteristic $0$, where all fibrations are tame. See for example \cite[\S 2.9]{Druel}.
\end{remark}

\begin{example}
Let $g \colon \A^2 \to \A^1$ be the fibration defined by $(x, y) \mapsto t:=xy$.
Let $X$ be the blow-up at the origin of $\A^2$ and call $E$ the exceptional divisor. Let $f \colon X \to \A^1$ be the induced fibration. 
Then, $f$ is tame if and only if the characteristic of the base field is $\neq 2$, in which case $K_{\cF}= K_X -f^*K_{\A^1}-E$. If the characteristic is $2$, the wild component is exactly $E$ and $K_{\cF}=K_X -f^*K_{\A^1}-2E$.
\end{example}

\begin{example}
    The assumption on the normality of the geometric generic fibre in \autoref{t-canonical foliation} is essential.
    Indeed, let $k$ be a field of characteristic $3$ and $S := V(y^2-x^3-t) \subseteq \A^3_{(x,y,t)}$. Let $f\colon S \to C:= \A^1_t$ be the projection onto the third coordinate.
    The fibration $f$ is a quasi-elliptic fibration, the geometric generic fibre is reduced, but it has a cusp.
    Let $\cF:= \ker(df)$ and $D:= V(y) \subseteq S$.
    We compute 
    \[
        \begin{cases}
            df(\partial_x) = 0;\\
            df(\partial_y) = 2y \partial_t.
        \end{cases}
    \]
    Therefore, we obtain the short exact sequence
    ${0 \rightarrow \cF \rightarrow T_S \rightarrow 2yf^*T_C \rightarrow 0}$, whence
    \[
        K_{\cF} = K_S -f^*K_C -D.
    \]
\end{example}

\subsection{Frobenius base change} \label{s-purelyinsep_basechange} \label{s-f2}

Note that the differential of the Frobenius morphism is $0$. This easy observation plays a key role in the study of fibrations.
In fact, to each finite purely inseparable morphism corresponds a foliation and vice-versa.
In particular, given a fibration $f$, its induced foliation corresponds to the foliation induced by any power of the relative Frobenius.
Using this correspondence, we provide formulas to relate the canonical divisors obtained from a Frobenius base change of a fibration.
Later, we perform this base change to overcome two issues: the fact that in positive characteristic we do not have a Cone theorem for foliations (\autoref{s-B&B}) and to deal with fibrations with normal, but non-log canonical fibres (\autoref{s-geometric star}).

\begin{definition}
    Let $X$ be a variety over a field of characteristic $p>0$. We will denote by $F\colon X^1\to X$ the \textbf{absolute Frobenius morphism of $X$} or simply \textbf{Frobenius morphism}. It is the identity on the underlying topological space, and it acts on regular functions by raising them to the $p^{\text{th}}$ power. For all $e\geq 1$ we denote by $F^e\colon X^e\to X$ the $e^{\text{th}}$ power of the absolute Frobenius.
\end{definition}

\begin{definition}
    Let $X$ and $X'$ be schemes over a field of characteristic $p>0$.
    A purely inseparable morphism $a \colon X' \rightarrow X$ is called of \textbf{height one} if there exists $\alpha \colon X \rightarrow X'$ such that $a\circ \alpha = F$.
\end{definition}

\begin{proposition}[{\cite[Proposition 2.9]{LMMP}}] \label{p-correspondence_foliations}
    Let $X$ be a normal variety.
    There is a $1$-to-$1$ correspondence
    
    \begin{center}
        \begin{tikzcd}
            \left\{ \begin{array}{c}\text{Height one morphisms}\\ \text{ $X\to X'$ with $X'$ normal}\end{array} \right\} \arrow[rr] & & \left\{ \begin{array}{c}\text{Foliations $\cF\subseteq T_{X}$}\arrow[ll]\\
            \end{array} \right\}
        \end{tikzcd}
    \end{center}
    given by:
    \begin{itemize}
    \item[($\leftarrow$)] $X':= \Spec_{X}(\cO_{X}^{\cF})$, where $\cO_{X}^{\cF}\subseteq \cO_{X}$ is the subsheaf of $\cO_{X}$ that is taken to zero by all the sections of $\cF$;
    \item[($\rightarrow$)] $\cF:=\lbrace \partial\in T_{X} \text{ s.t. } \partial\cO_{X'}=0\rbrace$.
    \end{itemize}
    Moreover, morphisms of degree $p^r$ correspond to foliations of rank $r$.
\end{proposition} 

\begin{samepage}
\leqnomode
\begin{flalign} \label{construction}
    \tag{\raisebox{-0.5ex}{\Snowflake}} \hspace{7mm} \text{\underline{Construction}} &&
\end{flalign}
\reqnomode

Given a fibration $f\colon X \rightarrow Z$ between normal varieties, we consider the following diagram
\end{samepage}

\begin{equation*}
\begin{tikzcd}
X^e \arrow[rr, bend left=35, "F^{e}"] \arrow[r,"\alpha_e"] \arrow[dr, "f", swap] & X^{(e)} \arrow[d,"f_e"] \arrow[r, "\beta_e"] & X \arrow[d, "f"] \\
 & Z^e \arrow[r, "F^e"] & Z,
\end{tikzcd}
\end{equation*}
where:
\begin{itemize}
\item we define $X_{Z^e}$ as the fibre product $X \times_{Z^e} Z$;
\item $X^{(e)}$ is the normalisation of the reduction of $X_{Z^e}$ and $f_e$ is the induced fibration;
\item $\alpha_e$ and $\beta_e$ are the induced maps, so that $\beta_e \circ \alpha_e = F^e$. When $X_{Z^e}$ is reduced, $\alpha_e$ generically coincides with the $e$\textsuperscript{th}-power of the relative Frobenius of $X$ over $Z$;
\item if $D \subseteq X$ is a prime divisor, denote by $D^{(e)} \subseteq X^{(e)}$ its reduced image in $X^{(e)}$;
\item if $D:= \sum_i a_i D_i$ is a $\Q$-divisor on $X$, where the $D_i$'s are prime divisors, by $\beta_e^{-1}D$ we denote the $\Q$-divisor $\sum_i a_i D_i^{(e)}$; 
\item if $W \subseteq X$ is a subvariety, we denote by $W^{(e)}$ its reduced image in $X^{(e)}$;
\item when $f$ is separable, $\cF$ denotes the foliation induced by $f$ and $\cF_e$ the foliation induced by $f_e$, unless otherwise stated;
\item in the sequel, we will often omit the superscript $e$ on the source of $F^e$ if it is clear from the context.
\end{itemize}

Next, we study some properties of the maps and the varieties involved in the diagram \autoref{construction}.
In particular, we determine the relations between the canonical divisors $K_X$, $K_{X^{(e)}}$ and the canonical divisors of the foliations induced by $f$ and $f_e$.

\begin{lemma} \label{l-Yk integral}
If $f\colon X \rightarrow Z$ is a flat separable fibration between normal varieties, $X_{Z^e}$ is integral and $X^{(e)}$ is its normalisation.
\end{lemma}

\begin{proof}
By \cite[Remark 2.5]{Jakub}, if in $X_{Z^e}$ there are some non-reduced components, they must dominate $Z$ since $f$ is flat.
Thus, we can check reducedness at $\eta$, the generic point of $Z$.
By \autoref{p-reducedness}, $X_{\eta}$ is geometrically reduced. 
Moreover, since purely inseparable morphisms are homeomorphisms, $X_{Z^e}$ is irreducible as $X$.
All in all, $X_{Z^e}$ is integral.
\end{proof}

\begin{lemma} \label{l-lemma!} 
    Let $f\colon X \rightarrow Z$ be a separable fibration between normal varieties such that the geometric generic fibre $X_{\widebar{\eta}}$ is normal. Let $\xi$ be a horizontal curve in $X$ and $p^{e_0}$ the purely inseparable degree of $f|_{\xi}$.
    Let $e \in \N$ and assume that $\alpha_e(\xi)$ is not contained in the conductor of $X^{(e)} \to X_{Z^e}$ (note that this is automatically satisfied if $Z$ is a curve).
    Then, $\deg(\alpha_e|_{\xi})= \min\{p^e, p^{e_0}\}$.
\end{lemma}
    
\begin{proof}
    Let $np^{e_0}$ be the degree of $f|_{\xi}$, where $n \in \N$ is coprime with $p$. In particular, $f|_{\xi}$ factors through $F^{e_0}$, but not through $F^{e_0+1}$.
    Let $\xi_e:= \alpha_e(\xi)$ and $\zeta:=f(\xi)$.
    By assumption, around the generic point of $\xi_e$ we can assume that $X^{(e)}\to X_{Z^e}$ is an isomorphism. Note that, when $Z$ is a curve, the normalisation of $X_{Z^e}$ is an isomorphism everywhere but on a vertical subset by \autoref{l-conductor is vertical}, therefore this condition is automatically satisfied.
    By the universal property of the fibre product, the purely inseparable part of $f_e|_{\xi_e}$ has degree $p^{e_0-e}$ if $e < e_0$, while $f_e|_{\xi_e}$ is separable otherwise.
    Consider the diagram:
    \[
    \xymatrix{
    \xi^{\nu} \ar[r]^{\alpha_e|_{\xi^{\nu}}} \ar[dr]_{f|_{\xi^{\nu}}} & \xi_e^{\nu} \ar[d]^{f_e|_{\xi_e^{\nu}}} \\
     & \zeta^{\nu}.
    }
    \]
    The purely inseparable parts of $f|_{\xi^{\nu}}$ and of ${f_e|_{\xi_e^{\nu}} \circ \alpha_e|_{\xi^{\nu}}}$ have the same degree, whence the conclusion.
\end{proof}

\begin{example}
Here, we show an example where $e_0$ in the above \autoref{l-lemma!} is non-zero.

Let $X:= \A^2_{(x,t)}$ and $Z:= \A^1_t$ with $f \colon X \to Z$ being the projection on the second coordinate. Let $e_0$ be a positive integer and let $\xi$ be the curve defined by $x^{p^{e_0}}-t=0$. Note that $X^{(e_0)}= \A^2_{(x, \tau)}$, where $\tau^{p^{e_0}}=t$. Then, the image $\xi_{e_0}$ of $\xi$ in $X^{(e_0)}$ is the zero locus of $x-\tau$, $f|_{\xi}$ and $\alpha_{e_0}|_{\xi}$ coincide with $F^{e_0}$, while $f_{e_0}|_{\xi_{e_0}}$ is separable.
\end{example}

\begin{remark}
    Let $X$ be a normal variety and $\cF$ a foliation on $X$.
    A subvariety $W$ is said to be \textbf{tangent} to $\cF$ if $T_W \subseteq \cF$. We point out that, over fields of positive characteristic, tangent subvarieties behave differently than in the characteristic $0$ case.

    Indeed, let $\cF$ be the foliation induced by a separable fibration $f\colon X \to Z$ with normal general fibres.
    Then, there may be curves that are tangent to $\cF$, but that are horizontal. Moreover, their image in $X^{(e)}$ may be not tangent to $\cF_e$, the foliation induced by $f_e$.
    In fact, if $\xi$ is a curve such that the purely inseparable degree of $f|_{\xi}$ is $p^{e_0}$, for some $e_0>0$, then $T_{\xi} \subseteq \ker(df)$, whereas, for $e \geq e_0$, $T_{\xi_e} \not\subseteq \ker(df_e)$, where $\xi_e$ is the image of $\xi$ in $X^{(e)}$.

   Furthermore, there may be curves that are vertical, but not tangent to $\cF$. For example, if $f\colon \A^2 \to \A^1$ is defined by $(x,y) \mapsto x^p(x+y)$, then the induced foliation is generated by $\partial_x-\partial_y$ and the curve defined by $x=0$ is vertical, but it is not tangent to $\cF$.
\end{remark}

\begin{lemma} \label{l-correspondence of foliations in diagram snowflake}
    If $f\colon X \to Z$ is a separable fibration between normal varieties, the foliations induced by $f$ and $\alpha_e$ coincide for every $e >0$. 
\end{lemma}

\begin{proof}
    Fix $e>0$. By \cite[Lemma 2.2]{FoliationAdj}, we can check whether the foliation induced by $f$ and the one induced by $\alpha_e$ coincide on a dense open subset of $X$.
    Therefore, since $f$ is separable, up to restricting to an open dense subset, we may assume that $X_{Z^e}$ is normal, $\alpha_e$ is the $e^{\text{th}}$ relative Frobenius of $X$ over $Z$, $f$ is smooth, and both $\ker(d\alpha_e)$ and $\ker(df)$ are saturated.

    Let $\xi \subseteq X$ be a curve.
    If $\xi$ is vertical, both $df|_{\xi}$ and $d \alpha_e|_{\xi}$ are $0$.
    On the other hand, if $\xi$ is horizontal, by \autoref{l-lemma!}, $\alpha_e|_{\xi}$ is purely inseparable if and only if the purely inseparable degree of $f|_{\xi}$ is $>1$.
    Therefore, $d\alpha_e|_{\xi}=0$ if and only if $df|_{\xi}=0$.

    Given a general point $x \in X$, if $v \in \ker(df)_x$, there exists a curve $\xi$ passing through $x$ with tangent vector $v$ and the same holds for $\ker(d\alpha_e)$. This implies that $\ker(df)$ and $\ker(d\alpha_e)$ coincide on an open subset $U \subseteq X$.
\end{proof}

\subsection{Wild multiplicities} \label{s-f3}

We now set the ground for the base change formula that we prove in \autoref{s-formulina}.
In particular, we study how the base change \autoref{construction} of a fibration with a power of the Frobenius morphism modifies the ``multiplicities'' of horizontal and vertical divisors.

In this section, we use the notation of \autoref{construction} in \autoref{s-purelyinsep_basechange}.

\begin{lemma} \label{l-coeffsdivisors}
    Let $f\colon X \rightarrow Z$ be a fibration between normal varieties such that the geometric generic fibre $X_{\widebar{\eta}}$ is normal. Let $D \subset X$ be a horizontal prime divisor.
    Let $\eta$ be the generic point of $Z$ and assume that $D_{\widebar{\eta}}= p^{e_0}D_{\widebar{\eta},\mathrm{red}}$ at the generic point of $D_{\widebar{\eta}}$ for some integer $e_0 \geq 0$.
    Then, at the generic point of $D_{\widebar{\eta}}$, we have:
    \[
     D_{\widebar{\eta}} = 
        \begin{cases}
        p^e D^{(e)}_{\widebar{\eta}} \; \text{if} \; e \leq e_0 \\
        p^{e_0} D^{(e)}_{\widebar{\eta}} \; \text{otherwise},
        \end{cases}
    \]
    and
    \[
    \beta_e^*D = 
        \begin{cases}
        p^e D^{(e)} \; \text{if} \; e \leq {e_0} \\
        p^{e_0} D^{(e)} \; \text{otherwise},
        \end{cases}
    \qquad
    \alpha_e^*D^{(e)} =
    \begin{cases}
        D \; \text{if} \; e \leq {e_0} \\
        p^{e-e_0} D \; \text{otherwise}.
    \end{cases}
    \]
    Moreover, $D^{({e_0})}_{\widebar{\eta}}$ is reduced at its generic point.
\end{lemma}
    
\begin{proof}
    First of all, note that, by \autoref{l-conductor is vertical}, around the generic point of $D$, we can assume $X_{Z^e}$ is normal.
    By cutting $Z$ with general hyperplanes, we can assume that $f \colon X \to Z$ is a fibration onto a curve.
    Let $f_e|_D\colon D^{(e)} \rightarrow Z$ be the induced map on $D^{(e)}$.
        By the universal property of the fibre product and since $Z$ is a curve, $f_{e_0}|_{D^{(e_0)}}$ is separable, thus $D^{({e_0})}_{\widebar{\eta}}$ is reduced by \autoref{p-reducedness}.
        We will prove the lemma by induction on ${e_0}$.
        If ${e_0}=0$, $f_e|_{D^{(e)}}$ is separable for each $e \in \N$, so $D^{(e)}_{\widebar{\eta}}$ is reduced and it coincides with $D_{\widebar{\eta}}$.
        In particular, $\beta_e^*D=D^{(e)}$ and $\alpha_e^*D^{(e)}=p^eD$ for all $e>0$.
        If ${e_0} >0$, consider the natural maps $X^{({e_0})} \rightarrow X^{(1)} \rightarrow X$.
        By the universal properties of the fibre product and since $Z$ is a curve, $D$ and $D^{(e)}$ are isomorphic at their generic points for all $e \leq {e_0}$, thus $f_e|_{D^{(e)}}$ has purely inseparable degree $p^{{e_0}-e}$.
        In particular, $f_{e_0}|_{D^{({e_0})}}$ is separable and $f_1|_{D^{(1)}}$ has purely inseparable degree $p^{{e_0}-1}$.
        The map $X^{(1)} \rightarrow X$ is purely inseparable of degree $p$ and $D^{(1)} \rightarrow D$ is an isomorphism. Thus $\beta_1^*D = p D^{(1)}$.
        Then, we conclude by the inductive assumption.
\end{proof}


\begin{remark} \label{r-tameness in diagram snowflake}
    Let $f\colon X \to Z$ be a tame separable equidimensional fibration between normal varieties.
    Then, for all $e> 0$, $f_e$ is tame as well.
    Indeed, if $\delta$ is a prime divisor in $Z$,
    \[
        p^e f_e^*\delta= \beta_e^*f^*\delta = \sum_{D \text{ over } \delta} \ell_D \beta_e^*D= \sum_{D \text{ over } \delta} \ell_D p^{d_D}D^{(e)},
    \]
    where the divisors $D$ and $D^{(e)}$ are prime and reduced, $\ell_D$ is coprime with $p$ by the tameness assumption on $f$ and $d_D \leq e$ since $\beta_e$ is a purely inseparable morphism factorising $F^e$.
    Hence, we have $d_D=e$ and the multiplicity of the vertical divisor $D^{(e)}$ with respect to $f_e$ is again $\ell_D$.
\end{remark}

\begin{lemma} \label{p-multiplicities wild components}
Let $f\colon X \to Z$ be a separable equidimensional fibration between normal varieties and $D$ a vertical prime divisor with multiplicity $\ell = np^{e_0}$, for some $n$ coprime with $p$ and $e_0 \geq 0$.
Then, the multiplicity of $D^{(e)}$ with respect to $f_e$ is
$\ell_e = np^{e_0-h_e}$, for some integer $0\leq h_e\leq \min\{e,e_0\}$.
In particular, $\beta_e^*D = p^{e-h_e}D^{(e)}$ and $\alpha_e^*D^{(e)}=p^{h_e} D$.
\end{lemma}

\begin{proof}
\noindent \textbf{Step 1.}
If $f$ is tame, i.e.\ $e_0=0$, we have $\ell_{e}=n$ for all $e \geq 0$ by \autoref{r-tameness in diagram snowflake}, whence the conclusion.

\noindent \textbf{Step 2.}
Now, suppose $e_0 >0$.
Since $f$ is equidimensional, by \cite[Lemma 2.19]{Lena-Joe} (as a consequence of the flattening lemma \cite[\S 3.3]{flattening}, \cite[Théorème 5.2.2]{RG}), up to restricting the fibration to a big open subset, we can assume $f\colon X \to Z$ is a flat fibration between smooth varieties.
Let $d:= \dim(X) -\dim(Z)$.
By localising around the generic point of $f(D)$ (or by cutting $Z$ with general hyperplanes), we can assume that $Z$ is a curve.
Then, by possibly restricting to open dense subsets around $D$, by \cite[Tag 039P]{stacks} there exist étale morphisms $\varphi$ and $\psi$ and a fibration $f^{\mathrm{et}}$, fitting in the following diagram: 
\[
\begin{tikzcd}
X \arrow[d, "f", swap] \arrow[r, "\psi"] & A:= \Spec(k[x,y_1, ..., y_d]) \arrow[d, "f^{\mathrm{et}}"]\\
Z \arrow[r, "\varphi"] & B:= \Spec(k[t]).
\end{tikzcd}
\]
We can assume $D=V(x) \subseteq A$ and $t= x^{np^{e_0}}u$, where $u \in k[x, y_1, ..., y_d]$ is a unit around $D$ and, since $f$ is separable, its Jacobian has generic rank $1$.

\noindent \textbf{Step 3.} In this step, we prove the proposition for the morphism $f^{\mathrm{et}} \colon A \to B$.

Consider the diagram:
\[
\begin{tikzcd}
A \arrow[dr, "f^{\mathrm{et}}", swap] \arrow[r, "\alpha_1^{\mathrm{et}}"] & A^{(1)} \arrow[d, "f_1^{\mathrm{et}}"] \arrow[r, "\beta^{\mathrm{et}}_1"] & A \arrow[d, "f^{\mathrm{et}}"]\\
& B:= \Spec(k[\tau]) \arrow[r, "F"] & B,
\end{tikzcd}
\]
where $\tau^{p}=t$ and $A^{(1)}$ is the normalisation of ${\Spec(k[x, y_1, ..., y_d, \tau]/(\tau^{p}-x^{np^{e_0}}u))}$.
Note that, since $A^{(1)}$ is normal, it contains an element $z$ such that $z x^{np^{e_0-1}}=\tau$. The map $f_1^{\mathrm{et}}$ is then described as:
\[
    A^{(1)}= \left(\Spec\left(\frac{k[x, y_1,...,y_d,z]}{(z^{p}-u)}\right)\right)^{\nu} \rightarrow B; \qquad \tau= z x^{np^{e_0-1}}.
\]
Since $\beta_1^{\mathrm{et}}$ factorises $F\colon A \to A$, either $(x=0)$ is irreducible or there exists $\xi$ in the section ring of $A^{(1)}$ such that $\xi^p=x$.
In the first case, $D^{(1)}=V(x)$ and its multiplicity with respect to $f_1^{\mathrm{et}}$ is $np^{e_0-1}$.
In the second case, $D^{(1)}=V(\xi)$ and its multiplicity with respect to $f_1^{\mathrm{et}}$ is $np^{e_0}$.
In particular, $\alpha_1^{\mathrm{et}*}D^{(1)}=pD$ and $\beta_1^{\mathrm{et}*}D=D^{(1)}$ in the first case, while $\alpha_1^{\mathrm{et}*}D^{(1)}=D$ and $\beta_1^{\mathrm{et}*}D=pD^{(1)}$ in the second case.

If $e >1$, we decompose the base change as $\beta_e^{\mathrm{et}}=\beta_1^{\mathrm{et}} \circ B_{e-1}^{\mathrm{et}}$, where $B_{e-1}^{\mathrm{et}}$ is induced by the normalised base change of $A^{(1)}$ along $F^{e-1}\colon Z \to Z$, and we prove the statement by induction on $e$.

\noindent \textbf{Step 4.}
We claim that, around $D^{(e)} \subseteq X^{(e)}$, there are étale morphisms $\varphi_e$ and $\psi_e$ and a fibration $f_{\mathrm{et}}^e$, fitting in the following diagram:
\[
\begin{tikzcd}
X^{(e)} \arrow[d, "f_e", swap] \arrow[r, "\psi_e"] & A^{(e)} \arrow[d, "f_e^{\mathrm{et}}"]\\
Z \arrow[r, "\varphi_e"] & B.
\end{tikzcd}
\]
Since multiplicities can be computed étale locally, we conclude by Step 3.

Now, let us show the claim.
Note that, by commutativity of the Frobenius morphism, we can choose $\varphi_e:= \varphi$.
Moreover, by the universal properties of the fibre product and since $X^{(e)}$ is normal, there is a map $X^{(e)} \xrightarrow{\psi_e} A^{(e)}$. We need to show that it is étale.
Let ${C:= X \times_A A^{(e)} \xrightarrow{\chi} A^{(e)}}$.
Since $\psi$ is étale and $A^{(e)}$ is normal, $\chi$ is étale and $C$ is normal as well. By construction, there is a map $X^{(e)} \to C$.
By the universal properties of the normalisation and of the fibre product, we construct also a map $C \to X^{(e)}$. By diagram chasing, we conclude that $C$ and $X^{(e)}$ are isomorphic and $\psi_e=\chi$ is étale.
\end{proof}

\begin{example}
    \begin{itemize}
        \item Let $f\colon \Spec(k[x, y] \to k[t])$ be the fibration induced by $t=x^py$ and let $D=V(x)$.
        Then the normalisation of $k[x, y, \tau]/(\tau^p=x^py)$ is $k[x, z]$ such that $z^p=y$ and $\tau=xz$, $\beta_1^*D$ is reduced and its multiplicity with respect to $f_1$ is $1$.
        \item Let $f\colon \Spec(k[x, y] \to k[t])$ be the fibration induced by $t=x^p(x+y^p)$ and let $D=V(x)$.
        The normalisation of $k[x, y, \tau]/(\tau^p=x^p(x+y^p))$ contains an element $z$ such that $z^p=x+y^p$. In particular $V(x)=V((z-y)^p)$ is not reduced.
    \end{itemize}
\end{example}

\begin{corollary} \label{c-ramifications}
    Let $f \colon X \to Z$ be a separable equidimensional fibration between normal varieties. For every wild component $D$, we denote by $e_D$ the integer such that the multiplicity of $D$ is $\ell_D= n_Dp^{e_D}$, for some $n_D$ coprime with $p$.
    Let $\cW$ be the set of all wild components of $f$.
    Then, for all $e \geq 1$, there exist $\cW_e \subseteq \cW$ and positive integers $1 \leq h_D \leq e_D$, such that
    \[
        \alpha_e^*R(f_e) = R(f) - \sum_{D \in \cW_e} (p^{h_D}-1)D.
    \]
    Moreover, for $e \gg 0$, the term $\sum_{D \in \cW_e} (p^{h_D}-1)D$ is independent of $e$.
\end{corollary}

\begin{proof}
    Let $D \subseteq X$ be a vertical prime divisor and $\delta:= f(D) \subseteq Z$. 
    We do local computations around $D$.
    Denote $f_e^*\delta = \ell_{D^{(e)}} D^{(e)}$.
    By \autoref{p-multiplicities wild components}, there exists $0\leq h_e \leq e_D$ such that $\ell_{D^{(e)}}=p^{e_D-h_e}n_D$.
    Moreover, $\alpha_e^*D^{(e)}=p^{h_e}D$.
    Hence, 
    \[
        \alpha_e^*R(f_{e})= (p^{e_D-h_e}n_D-1)\alpha_e^*D^{(e)}=R(f)-(p^{h_e}-1)D.
    \]
    As for the ``moreover'' part, note that $\cW$ is a finite set and, for each wild component, its multiplicity along the morphisms $f_e$ can drop only a finite number of times. In particular, for $e \gg 0$, the values $h_e$ are independent of $e$.
\end{proof}

\subsection{Base change formula} \label{s-formulina}

In the papers \cite[Corollary 3.4]{Ekedahl}, \cite{LMMP} and \cite{Lena-Joe}, the authors give a very explicit description of the relative canonical bundle of a purely inseparable base change of height $1$.
In the next two sections, we study similar relations for the base change described in \autoref{construction} in \autoref{s-purelyinsep_basechange}.

In this section we use the notation of \autoref{construction} in \autoref{s-purelyinsep_basechange}.

\begin{proposition}[{\cite[Proposition 2.10]{LMMP},  \cite[Proposition 9.1.2.3]{SB}, \cite[{Corollary 3.4}]{Ekedahl}}] \label{p-foliations and relative canonical}
Let ${X \rightarrow X'}$ be a purely inseparable morphism of height one between normal varieties and let $\mathcal{F}$ be the corresponding foliation.
Then
\[
\omega_{X/X'} \simeq (\det \mathcal{F})^{[p-1]},
\]
where by $[p-1]$ we denote that we are taking the reflexified tensor product $p-1$ times.
\end{proposition}


\begin{corollary} \label{c-formulina}
If $\mathcal{F}$ is the foliation induced by a separable equidimensional tame fibration $f\colon X \rightarrow Z$ between normal varieties such that the geometric generic fibre $X_{\widebar{\eta}}$ is normal, 
then:
\[
\alpha_e^*K_{X^{(e)}} = (p^e-1) K_{\mathcal{F}} + K_X \quad \text{and} \quad \alpha_e^*K_{\cF_e} = p^e K_{\mathcal{F}}.
\]
\end{corollary}

\begin{proof}
We prove the statement by induction on $e$.
For $e=1$,
\[
    \alpha_1^*K_{X^{(1)}} = (p-1)K_{\cF} +K_X,
\]
by \autoref{p-foliations and relative canonical} and \autoref{l-correspondence of foliations in diagram snowflake}.
Since $f$ is tame, by \autoref{t-canonical foliation} and \autoref{c-ramifications}, $\alpha_1^*K_{\cF_1}= pK_{\cF}$.

If $e>1$, factorise the diagram in \autoref{construction} in \autoref{s-purelyinsep_basechange}, in the following way:
\[
\xymatrix{
X \ar[rd]_{\alpha_{e-1}} \ar[rrd]^{\alpha_e} \ar@/_3pc/[ddrr]_{f} & & & \\
 		& X^{(e-1)} \ar[dr]_(.4){f_{e-1}} \ar[r]^(.4){\delta} & X^{(e)} \ar[d]^{f_e} \ar[r] & X^{(e-1)}  \ar[d]^{f_{e-1}}\\
 & & 												Z \ar[r]^{F} 			& Z.
}
\]
Let $\cF_{e-1}$ be the foliation induced by $f_{e-1}$.
By \autoref{p-foliations and relative canonical} and \autoref{l-correspondence of foliations in diagram snowflake} applied to the lower part of the diagram above,
\[
\alpha_e^*K_{X^{(e)}} = \alpha_{e-1}^*\delta^*K_{X^{(e)}} = (p-1)\alpha_{e-1}^*K_{\cF_{e-1}} + \alpha_{e-1}^*K_{X^{(e-1)}}.
\]
Since $f_{e-1}$ is tame by \autoref{r-tameness in diagram snowflake}, we get that $K_{\cF_{e-1}} = {K_{X^{(e-1)}}- f_{e-1}^*K_Z-R(f_{e-1})}$ by \autoref{t-canonical foliation}.

By induction, we know that
\[
\alpha_{e-1}^*K_{X^{(e-1)}} = (p^{e-1}-1) K_{\mathcal{F}} + K_X.
\]
Using that $K_{\cF}= K_X -f^*K_Z -R(f)$ by \autoref{t-canonical foliation} and that $\alpha_{e-1}^*R(f_{e-1})=R(f)$ by \autoref{c-ramifications}, we get the result.
\end{proof}

\begin{theorem} \label{t-formulina_general}
    Let $f\colon X \to Z$ be a separable equidimensional fibration between normal varieties such that the geometric generic fibre $X_{\widebar{\eta}}$ is normal.
    Then,
    \[
        \alpha_e^*K_{X^{(e)}} = (p^e-1)(K_X-f^*K_Z-R(f)) +K_X - \sum_{D \text{ wild}}w_{D,e}D,
    \]
    where $w_{D,e} \geq 0$ for all $e$ and all $D$ wild components.

    More precisely, if the multiplicity of $D$ with respect to $f$ is $\ell=np^{e_0}$, with $n$ coprime with $p$, then $w_{D,e} \geq p^{e}-1$ if $e \leq e_0$ and $w_{D,e} \geq p^{e_0}-1$ otherwise.
\end{theorem}

\begin{proof}
    Let $D \subseteq X$ be a vertical prime divisor.
    We prove the statement locally around $D$.
    By \autoref{c-formulina}, if $f$ is tame around $D$, the theorem holds. Therefore, we can suppose $D$ is a wild component of multiplicity $\ell= np^{e_0}$, with $n$ not divisible by $p$.
    
    We prove the statement by induction on $e$.
    If $e=1$, by \autoref{p-foliations and relative canonical}, \autoref{l-correspondence of foliations in diagram snowflake} and \autoref{t-canonical foliation},
    \[
        \alpha_1^*K_{X^{(1)}}= (p-1)(K_X-f^*K_Z-R(f)) +K_X -(p-1)(a_D+1)D,
    \]
    for some integer $a_D\geq 0$.
    Now, assume $e>1$ and assume that $D^{(e-1)}$ is still a wild component for $f_{e-1}$.
    Factorise the diagram in \autoref{construction} in \autoref{s-purelyinsep_basechange}, in the following way:
    \[
    \xymatrix{
    X \ar[rd]_{\alpha_{e-1}} \ar[rrd]^{\alpha_e} \ar@/_3pc/[ddrr]_{f} & & & \\
             & X^{(e-1)} \ar[dr]_(.4){f_{e-1}} \ar[r]^(.4){\delta} & X^{(e)} \ar[d]^{f_e} \ar[r] & X^{(e-1)}  \ar[d]^{f_{e-1}}\\
     & & 												Z \ar[r]^{F} 			& Z.
    }
    \]
   By \autoref{p-multiplicities wild components}, the multiplicity of $D^{(e)}$ is $\ell_e=np^{e_0-h_e}$, for some $0 \leq h_e \leq e_0$.
    Then,
\begin{align*}
&\alpha_e^*K_{X^{(e)}}= \alpha_{e-1}^* \delta^*K_{X^{(e)}}\\
&=\alpha_{e-1}^*((p-1)K_{\cF_{e-1}}+K_{X^{(e-1)}})
    \hspace{44pt}\textup{\footnotesize{by \autoref{p-foliations and relative canonical} and \autoref{l-correspondence of foliations in diagram snowflake}}}\\
&= \alpha_{e-1}^*((p-1)(K_{X^{(e-1)}}-f_{e-1}^*K_Z-(np^{e_0-h_{e-1}}-1)D^{(e-1)})
\\&-(p-1)(b_D+1)D^{(e-1)}+K_{X^{(e-1)}})   \hspace{30pt}\textup{\footnotesize{by \autoref{t-canonical foliation}}}\\
& = p(p^{e-1}-1)(K_X-f^*K_Z-R(f)) +pK_X -(p-1)f^*K_Z-(p-1)(np^{e_0}-1)D \\
& -p w_{e-1}D+(p-1)(p^{h_{e-1}}-1)D - p^{h_{e-1}}(p-1)(b_D+1)D\\
 &    \hspace{212pt}\textup{\footnotesize{by induction and \autoref{p-multiplicities wild components}}}\\
&=(p^e-1)(K_X-f^*K_Z-R(f)) +K_X -w_e D,
\end{align*}
where $b_D \geq 0$ and $w_{e-1} \geq \max\{p^{e-1}-1, p^{e_0}-1\}$ by the inductive step. Therefore, 
\[
    w_e= pw_{e-1}- (p-1)(p^{h_{e-1}}-1)+p^{h_{e-1}}(p-1)(b_D+1) \geq  \max\{p^{e}-1, p^{e_0}-1\}.
\]

Now, let $e'$ be the smallest integer such that $D^{(e')}$ is not a wild component of $f_{e'}$ (if it exists, it is certainly $\geq e_0$) and let $e \geq e'$. Consider the factorisation
\[
\xymatrix{
    X \ar[r]^{\alpha_{e'}} \ar@/^1.7pc/[rr]^{\alpha_e} & X^{(e')} \ar[r]^{A_e} & X^{(e)} \ar[r]^{B_e} \ar@/^1.7pc/[rr]^{\beta_e} & X^{(e')} \ar[r]^{\beta_{e'}} & X.
}
\]
Since $f_e$ is tame around $D^{(e)}$ for $e\geq e'$, we apply \autoref{c-formulina} to $X^{(e')} \xrightarrow{A_e} X^{(e)} \xrightarrow{B_e} X^{(e')}$. After that, we apply the above computation to the morphisms ${X \xrightarrow{\alpha_{e'}} X^{(e')} \xrightarrow{\beta_{e'}} X}$.
All in all,
\begin{align*}
    &\alpha_e^*K_{X^{(e)}}= \alpha_{e'}^*A_e^*K_{X^{(e)}}\\
    & =\alpha_{e'}^*
    ((p^{e-e'}-1)(K_{X^{(e')}}-f_{e'}^*K_Z-(n-1)D^{(e')})+K_{X^{(e')}})\\ 
    & \hspace{190pt}\textup{\footnotesize{by \autoref{c-formulina}}}\\
    & = p^{e-e'}(p^{e'}-1)(K_X-f^*K_Z-R(f)) +p^{e-e'}K_X -(p^{e-e'}-1)f^*K_Z\\ 
    & - (p^{e-e'}-1)R(f) -p^{e-e'}w_{e'}D + (p^{e-e'}-1)(p^{e_0}-1)D \\
    & \hspace{190pt}\textup{\footnotesize{by the above computation and \autoref{p-multiplicities wild components}}}\\
    & = (p^e-1)(K_X-f^*K_Z-R(f)) +K_X -w_e D,
\end{align*}
where
\[
    w_e = p^{e-e'}w_{e'} - (p^{e-e'}-1)(p^{e_0}-1) \geq p^{e_0}-1.
\]
\end{proof}

\begin{remark}
    Let $f\colon X \to Z$ be an equidimensional separable fibration between normal varieties with normal geometric generic fibre and let $\cF$ be the induced foliation.
    For every $e \geq 0$, let $\cF_e$ be the foliation induced by $f_e$.
    The sequence $(\cF, \cF_1, ..., \cF_e,...)$ (resp.\ $(\cF, \cF_1, ..., \cF_e)$) is an \emph{$\infty$-foliation} (resp.\ an \emph{$e$-foliation}) in the sense of \cite[Definition 2.19, Definition 4.6]{Chemy}.
    Combining the above \autoref{t-formulina_general} with \autoref{t-canonical foliation}, we see that
    \[
        \alpha_e^*K_{X^{(e)}} = (p^e-1)K_{\cF} +K_X + \sum_{D \text{ wild}} ((p^e-1)(a_D+1)-w_{D,e})D.
    \]
    We compare this with the results in \cite{Chemy_thesis}, which state that
    \[
        \alpha_e^*K_{X^{(e)}} = (p^e-1)K_{\cF} +K_X + E,
    \]
    for $E$ an effective $\Q$-divisor such that $E=0$ if and only if $(\cF, \cF_1, ..., \cF_e)$ is \emph{Ekedahl} in the sense of \cite{Chemy_thesis} (and \cite{Ekedahl}).
    Therefore, we conclude that $E$ is supported on the wild components, $w_{D,e} \leq (p^e-1)(a_D+1)$ and $(\cF, \cF_1, ..., \cF_e)$ is Ekedahl if and only if $f$ is a tame fibration.
    We expect that, if the geometric generic fibre of $f$ is not normal, the divisor $E$ is supported on the wild components \textit{and} on the (horizontal) singular locus of the fibres.
\end{remark}

\section{Moduli and discriminant parts} \label{s-CBF ACSS}

Classically, the canonical bundle formula has been studied using variations of Hodge structures on \textit{$K$-trivial} fibrations $f\colon X \to Z$, i.e.\ fibrations where $K_X$ is the pull-back of some $\Q$-Cartier $\Q$-divisor on $Z$ (see \cite{Kaw98,FujinoMori,Shokurov,modulibdiv,KollarCBF}).
In \cite{positivity}, the authors study the canonical bundle formula in characteristic $0$ using tools of the Minimal Model Program.
In this setting it is useful to extend our focus to more general fibrations, not only the $K$-trivial ones.
This will give us the necessary flexibility to perform birational transformations.

\begin{definition} \label{lct} \label{d-discriminant}
    Let $f\colon X \rightarrow Z$ be a fibration between normal varieties and $B$ a $\Q$-divisor on $X$ such that $K_X+B$ is $\Q$-Cartier.
    For each prime divisor $\delta \subseteq Z$, define
    \[
    \gamma_{\delta} := \sup \{ t \in \R \, \text{ s.t. } (X, B+ tf^*\delta) \, \text{ is log canonical over the generic point of $\delta$} \}.
    \] 

Let $f\colon X \rightarrow Z$ be a fibration between normal projective varieties and $(X/Z, B)$ a GGLC pair on it (see \autoref{d-GGLC}).
 The \textbf{discriminant part} of $f$ is
    \[
    B_Z:= \sum_{\delta \subseteq Z} (1 - \gamma_{\delta})\delta,
    \]
    where the sum is taken over all prime divisors of $Z$.

Let $f\colon X \rightarrow Z$ be a fibration between normal varieties and $(X/Z, B)$ a GGLC pair on it such that $B^h \geq 0$. 
Let $B_Z$ be the discriminant part of $f$.
Suppose that $f$ is equidimensional or that $K_Z+ B_Z$ is $\Q$-Cartier.
Then, the \textbf{moduli part} of $f$ is
\[
M_X := K_X+B-f^*(K_Z+B_Z).
\]
Note that $M_X$ is defined on the total space $X$ and only up to linear equivalence.
\end{definition}

\begin{proposition} \label{p-well-defined}
Assume inversion of adjunction in dimension $n$ and the existence of log resolutions of singularities in dimension $d$.
Let $X$ be a variety of dimension $n$ and $f\colon X \rightarrow Z$ be an equidimensional fibration between normal varieties such that $\dim(X)-\dim(Z)=d$.
Let $(X/Z, B)$ be a GGLC pair associated with it with $B^h \geq 0$.
Then, the discriminant part $B_Z$ is well-defined. Consequently, the moduli part $M_X$ is also well-defined. 
\end{proposition}

\begin{proof}
By \autoref{p-general vs geometric generic lc}, a general fibre $X_z$ is normal and the pair defined by adjunction $(X_z, B_{z})$ is log canonical.
Let $U \subseteq Z$ be the dense open subset of $Z$ such that $(X_z, B_z)$ is log canonical with $B_z \geq 0$ for all $z \in U$, and $f^*\delta$ is integral for every $\delta$ prime divisor not contained in $Z \setminus U$.
Let $\delta$ be a prime divisor in $Z$ which is not contained in $Z \setminus U$ and let $X_{\delta}$ be the fibre over $\delta$.
We claim that $\gamma_{\delta}=1$.
If this was not the case, there would exist a non-log canonical place $E$ of $(X_{\delta}, B_{\delta})$ such that its centre contains the generic point of $\delta$.
In particular, by adjunction, this would imply that, for $z \in \delta$ general, there exists a non-log canonical place of $(X_z, B_{z})$, which is a contradiction.
By inversion of adjunction, we conclude that $(X,B+f^*\delta)$ is log canonical around the generic point of $\delta$.

In particular, $\gamma_{\delta}=1$ for all, but finitely many prime divisors $\delta \subseteq Z$, therefore $B_Z$ and consequently $M_X$ are well-defined $\Q$-divisors.
\end{proof}

\begin{remark}
    Note that we need to assume inversion of adjunction in dimension $n$ and existence of log resolutions of singularities in dimension $d$ even just to prove that the discriminant and moduli parts are well-defined (\autoref{p-general vs geometric generic lc} and \autoref{p-well-defined}). For this reason, we will make these assumptions in many results later even if we do not explicitly use them.
\end{remark}

\section{Property \texorpdfstring{$(\ast)$}{} in positive characteristic} \label{s-property star}

\subsection{General properties}

The main idea behind the proof of positivity of the moduli part consists of two steps. First, we find a class of fibrations for which it is easier to get the result. Then, we reduce to this case with a birational modification.
The authors of \cite{positivity} introduce the notion of Property $(\ast)$: when a fibration satisfies this property, the moduli part coincides with the canonical divisor of the foliation associated with it.
In this context, we can use the birational geometry of the foliation to get the desired positivity.

\begin{definition}[{\cite[Definition 2.13]{positivity}}] \label{d-Prop star}
Let $f\colon X \rightarrow Z$ be a fibration between normal varieties and $(X/Z, B)$ a GGLC pair on it with $B \geq 0$.
We say it satisfies \textbf{Property $(\ast)$} if:
\begin{itemize}
\item[(a)] there exists a reduced divisor $\Sigma_Z$ on $Z$ such that $(Z, \Sigma_Z)$ is log smooth and $B^v = f^{-1}(\Sigma_Z)$;
\item[(b)] for any closed point $z \in Z$ and any reduced divisor $\Sigma \geq \Sigma_Z$ such that $(Z, \Sigma)$ is log smooth around $z$, then $(X, B+ f^*(\Sigma-\Sigma_Z))$ is log canonical around $f^{-1}(z)$.
\end{itemize}
\end{definition}

In the next proposition, we list some useful features that Property $(\ast)$ pairs enjoy.

\begin{proposition}[{\cite[Lemma 2.14, Proposition 2.18]{positivity}}] \label{p-properties property star}
Assume inversion of adjunction in dimension $n$ and the existence of log resolutions of singularities in dimension $d$.
Let $X$ be a variety of dimension $n$ and $f\colon X \rightarrow Z$ a fibration between normal varieties such that $\dim(X)-\dim(Z)=d$.
Let $(X/Z, B)$ be a GGLC pair on it satisfying {Property $(\ast)$} with $B\geq 0$.
Then the following properties hold.
\begin{itemize}
\item[(i)] The pair $(X, B)$ is log canonical and the discriminant part $B_Z$ coincides with $\Sigma_Z$. Moreover,if $B \geq 0$, the map $f$ is equidimensional outside $\Sigma_Z$.
\item[(ii)] Suppose $B \geq 0$ and let $\varphi\colon X \dashrightarrow Y$ be a sequence of steps of the $(K_X+ B)$-MMP over $Z$. Let $C:= \varphi_*B$ and $g\colon Y \rightarrow Z$ be the induced fibration. Then, $(Y/Z, C)$ is a GGLC pair by \autoref{l-normal_afterMMP}, it satisfies Property $(\ast)$ and for any closed point $z \in Z$, the map $\varphi^{-1}$ is an isomorphism along the generic point of any irreducible component of $g^{-1}(z)$.
In particular, the discriminant part of $(Y/Z, C)$ coincides with the discriminant part of $(X/Z,B)$.
\end{itemize}
\end{proposition}

\subsection{Moduli part}

We use the notation of \autoref{construction} in \autoref{s-purelyinsep_basechange}.

When the fibration satisfies Property $(\ast)$, the moduli part has an easy description.

\begin{proposition} \label{p-M_X for property star}
    Assume inversion of adjunction in dimension $n$ and the existence of log resolutions of singularities in dimension $d$.
Let $X$ be a variety of dimension $n$ and $f\colon X \rightarrow Z$ be an equidimensional fibration between normal varieties such that $\dim(X)-\dim(Z)=d$.
    Let $(X/Z,B)$ be a GGLC pair associated with it which satisfies Property $(\ast)$ with $B \geq 0$.
    Then
    \[
        M_X = K_X + B^h -f^*K_Z - R(f),
    \]
    where $R(f):= \sum_{\delta \subseteq Z} (f^*\delta-f^{-1}(\delta))$ is as in \autoref{d-tame}.
\end{proposition}

\begin{proof}
    The proof follows \cite[Proposition 3.6]{positivity}, which states a similar equality over fields of characteristic $0$.
    We need to show that, if $B_Z$ is the discriminant part, $B^v-f^*B_Z=-R(f)$.
    Since $(X/Z, B)$ has Property $(\ast)$, $B^v$ and $B_Z$ are reduced $\Q$-divisors and $B^v=f^{-1}(B_Z)$.
    Let $D$ be a vertical prime divisor and let $\ell_D$ be its multiplicity with respect to $f$. Then, the coefficient of $D$ in $B^v-f^*B_Z$ is $1-\ell_D$, as in $-R(f)$.
\end{proof}

\begin{corollary} \label{r-formulina}
    Assume inversion of adjunction in dimension $n$ and the existence of log resolutions of singularities in dimension $d$.
Let $X$ be a variety of dimension $n$ and $f\colon X \rightarrow Z$ be an equidimensional fibration between normal varieties such that $\dim(X)-\dim(Z)=d$.
    Let $(X/Z, B)$ be a GGLC pair on it satisfying {Property $(\ast)$} with $B \geq 0$.
    For every $e \geq 0$:
    \[
        \alpha_e^*K_{X^{(e)}} = (p^e-1)(M_X-B^h)+K_X - \sum_{D \text{ wild}} w_{D,e} D,
    \]
    where $w_{D,e} \geq 0$ for every wild component $D$.
    \end{corollary}
  
    \begin{proof}
        Combine \autoref{t-formulina_general} with \autoref{p-M_X for property star}.
    \end{proof}

\subsection{Existence of \texorpdfstring{$(\ast)$}{}-modifications}

Given any GLC pair in characteristic $0$, in \cite[Theorem 2.2, Proposition 2.16]{positivity} the authors construct a birationally equivalent model that satisfies Property $(\ast)$ thanks to the existence of toroidal modifications as proven in \cite{AK}.
In characteristic $p>0$, however, we cannot always use this construction. 
In fact, in one of the steps, the idea of the proof is to inductively apply the Semistable Reduction theorem for curves and consider the quotient by the action induced by the finite cover needed to obtain the semistable model. However, the variety we obtain in this way has bad singularities if the finite cover has order divisible by $p$ (\cite[Remark 0.3.2]{AdJ}).
Nonetheless, when $Z$ is a curve, we are able to construct $(\ast)$-modifications by blowing-up vertical centres.
This is one of the key reasons why we restrict our main results (\autoref{t-goal star}, \autoref{t-goal} and \autoref{t-ftrivial}) to the case of $Z$ being a curve.
We point out that, unfortunately, even assuming the existence of $(\ast)$-modifications in positive characteristic, the proof of \autoref{t-goal star} does not go through completely.
Indeed, when performing adjunction on a log canonical centre of a pair satisfying Property $(\ast)$, we need to control that the induced pair still satisfies Property $(\ast)$ (see \autoref{s-adjunction}) and it is not clear to the author if this property is satisfied in general.
    
\begin{lemma} \label{l-vertical_dlt_modification}
    Assume the birational dlt LMMP and the existence of log resolutions in dimension $n$.
    Let $f\colon X \rightarrow Z$ be a fibration from a normal projective variety $X$ of dimension $n$ onto a normal projective variety $Z$.
    Let $B \geq 0$ be a $\Q$-divisor such that $K_X+B$ is $\Q$-Cartier.
    Assume that $(X_{\eta}, B_{\eta})$ is $\Q$-factorial and dlt, where $\eta$ is the generic point of $Z$.
    Then, there exists a dlt modification $\mu\colon Y \to X$ such that:
    \begin{itemize}
        \item[(i)] the map $\mu$ is an isomorphism over $\eta$;
        \item[(ii)] $Y$ is $\Q$-factorial;
        \item[(iii)] if $C:= \mu_*^{-1}B + \sum_{E \subseteq \Exc(\mu)} E$, then $(Y, C)$ is dlt;
        \item[(iv)] there exists an effective $\mu$-exceptional $\Q$-divisor $R$ on $Y$, supported on non-log canonical places of $(X,B)$, such that
    \[
    K_Y+C+R=\mu^*(K_X+B);
    \]
    \item[(v)] $K_Y+C$ is $\mu$-nef.
    \end{itemize}
\end{lemma}
    
\begin{proof}
        First of all, note that, since $X_{\eta}$ is $\Q$-factorial and $(X_{\eta}, B_{\eta})$ is dlt, there exists a dense open subset $U \subseteq Z$ such that $(X_U, B|_{X_U})$ is dlt and $X_U$ is $\Q$-factorial, where $X_U:= f^{-1}(U)$.
        By the dlt property, there exists $V \subseteq X_U$ open dense subset such that $(V, B|_V)$ is log smooth and all the log canonical centres of $(X_U, B|_{X_U})$ are contained in $V$.
    Let $\sigma\colon X' \to X$ be a log resolution of $(X,B)$ that is an isomorphism on $V$ and write
    \[
    K_{X'}+B'+R'=\sigma^*(K_X+B)+E',
    \]
    where $B':= \sigma_*^{-1}B+ \sum_{E \subseteq \Exc(\sigma)} E$ and $R'$ and $E'$ do not have any common component.
    Note that, if $E$ is an irreducible exceptional divisor with discrepancy $>-1$, then it is in the support of $E'$.
    Now, run a $(K_{X'}+B')$-MMP over $X$ and let $Y$ be the resulting variety, so that it fits in the following diagram:
    \[
    \xymatrix{
    X' \ar@{-->}[r]^{\varphi} \ar[dr]_{h} & Y \ar[r]^{\mu} \ar[d]^{g} & X \ar[dl]^f \\
     & Z,
    }
    \]
    where $\varphi$ is the composition of the steps of the MMP, $\mu$ is proper birational and $g$ and $h$ are the induced fibrations.
    Let $C:= \varphi_*B'$ and $R:=\varphi_*R'$.
    Note that $\varphi_*E'=0$ by the Negativity lemma (see \cite[Lemma 3.39]{KM}).
    We claim that $Y \to X$ is an isomorphism over $X_U$.
    Indeed, it is an isomorphism over $V$ by construction. Furthermore, all the exceptional divisors whose centre is in $X_U \setminus V$ have discrepancy $>-1$, and therefore they are contracted by $\varphi$.
    Thus, $\mu^{-1}(X_U) \to X_U$ is a small birational map between $\Q$-factorial varieties, whence it is an isomorphism.
\end{proof}
    
\begin{theorem} \label{t-property star modifications v1.2}
    Assume the birational dlt LMMP and inversion of adjunction in dimension $n$, and the existence of log resolutions in dimensions $n$ and $n-1$.
    Let $f\colon X \rightarrow Z$ be a fibration from a normal projective variety $X$ of dimension $n$ onto a normal curve $Z$ and $(X/Z, B)$ a GGLC pair associated with it with $B \geq 0$.
    Let $\eta$ be the generic point of $Z$ and assume that $X_{\eta}$ is $\Q$-factorial and $(X_{\eta}, B_{\eta})$ is dlt.
    Then, there exists a GGLC pair $(Y/Z, C)$ satisfying Property $(\ast)$, with $Y$ $\Q$-factorial, $C \geq 0$, $\lfloor C \rfloor - \sum_{E \subseteq \Exc(\mu)}E \geq 0$, and $(Y, C)$ dlt, together with a commutative diagram:
    \[
    \xymatrix{
    Y \ar[r]^{\mu} \ar[dr]_{g} & X \ar[d]^f \\
     & Z,
    }
    \]
    where $\mu$ is a proper birational map which is an isomorphism over an open dense subset of $Z$ and $g$ is a fibration.
    Moreover, there exist a vertical effective exceptional $\Q$-divisor $R$, whose support consists of non-log canonical places of $(X,B)$, and a vertical effective $\Q$-divisor $G$, such that
    \[
    K_Y+ C + R = \mu^*(K_X+B) + G.
    \]
    The pair $(Y,C)$ is called a \textbf{$(\ast)$-modification} of $(X,B)$.
\end{theorem}
    
\begin{proof}
    \noindent \textbf{Step 1.}
    By \autoref{l-vertical_dlt_modification}, there exists a dlt modification $\rho\colon X' \to X$ which is an isomorphism over $\eta$. In particular, $X'$ is $\Q$-factorial and, if $B'$ is the sum of the strict transform of $B$ with all the reduced $\rho$-exceptional divisors:
    \[
    K_{X'}+B'+R'= \rho^*(K_X+B),
    \]
    where $R'$ is a vertical $\Q$-divisor supported on the non-log canonical places of $(X,B)$.
    
    \noindent \textbf{Step 2.}
    Now, we produce a birational model which is GGLC and satisfies Property~$(\ast)$.
    
    Let $h\colon X' \rightarrow Z$ and let 
    \[
    G':= \left( \sum_{z \in \Supp(B_Z)} h^{-1}(z) \right)- B'^v \geq 0,
    \]
    where $B_Z$ is the discriminant part of $(X'/Z,B')$.
    Finally, let $\widebar{B}:= B'+G'$.
    By \autoref{l-vertical_dlt_modification}, there exists $\sigma\colon Y \to X'$, a dlt modification of $(X', \widebar{B})$, which is an isomorphism over $\eta$.
    Define $C$ as the sum of the strict transform of $\widebar{B}$ with all the reduced $\sigma$-exceptional divisors.
    By construction, the discriminant part is $\Sigma_Z = \sum_{z \in \Supp(B_Z)} (z)$.
    Let $g\colon Y \to Z$ be the induced fibration and $\mu\colon Y \to X$ the induced birational map.
    
    We claim that $(Y/Z, C)$ satisfies Property~$(\ast)$.
    \begin{itemize}
    \item[(a)] The pair is GGLC because $\mu$ is an isomorphism over $\eta$;
    \item[(b)] since $Z$ is a curve, $(Z, \Sigma_Z)$ is log smooth; 
    \item[(c)] if $z \in Z \setminus \Supp(\Sigma_Z)$, $(Y, C + Y_z)$ is log canonical around $Y_z= g^{-1}(z)$ because $\sigma$ is an isomorphism outside $g^{-1}(B_Z)$;
    \item[(d)] if $z \in \Supp(\Sigma_Z)$, $(Y, C)$ is log canonical (around $g^{-1}(z)$) by construction.
    \end{itemize}
    Moreover,
    \[
    K_{Y}+C+R^*=\sigma^*(K_{X'}+\widebar{B})= \sigma^*(K_{X'}+B')+\sigma^*G',
    \]
    for an effective $\Q$-divisor $R^*$ supported on the non-log canonical places of $(X', \wb{B})$.
    
    Note that, since $(X',B')$ is log canonical, $\sigma^*G'-R^*$ is effective. 
    To conclude, let $R:=\sigma^*(R')$ and $G:=\sigma^*G'-R^*$,
    then:
    \[
    K_Y+C+R = \mu^*(K_X+B)+G,
    \]
    and $(Y/Z, C)$ satisfies the required properties.
\end{proof}

\begin{remark}
    Let $f \colon X \to Z$ be a fibration between normal projective varieties and let $\eta$ be the generic point of $Z$.
    Note that, if we blow-up a centre $\cC$ such that $\cC_{\wb{\eta}}$ is non-reduced, the resulting fibration may have highly singular fibres (see for example \cite[Proposition 5.3]{Jeff_Albanese}).
    In particular, if $(X/Z,B)$ is a GGLC pair, and $(Y,C)$ is a dlt modification of $(X,B)$, it may happen that $Y_{\wb{\eta}}$ is not normal.
    Therefore, in order to maintain the GGLC property, we have only performed birational operations that are isomorphisms over the generic point of $Z$.
\end{remark}

\subsection{Geometric \texorpdfstring{$(\ast)$}{}-modifications} \label{s-geometric star}

To do induction on the dimension, we will perform adjunction on log canonical centres of the geometric generic fibre. Therefore, we need to extract divisors with discrepancy $-1$ over them.
However, this cannot always be done over the total space $X$.
In the next examples, we see that it may happen that $X$ does not have any log canonical centre, even if the geometric generic fibre does.
To overcome this issue, we base change our fibration with a high enough power of the Frobenius morphism as in \autoref{construction} in \autoref{s-purelyinsep_basechange}: in this way the singularities of the geometric generic fibre appear on the total space.

In this section we use the notation of \autoref{construction} in \autoref{s-purelyinsep_basechange}.

\begin{example} 
Let $X$ be the projective closure of $V(x^3s+ y^3s + xyzs + v^3t)$ inside $\Pn^{3}_{[x:y:z:v]} \times \Pn^{1}_{[s:t]}$ and let $f\colon X \rightarrow \Pn^{1}_{[s:t]}$ be the fibration induced by the projection onto the second factor.
On the open set where $s \neq 0, v \neq 0$, $X$ is regular everywhere.
If the characteristic is $\neq 3$, the geometric generic fibre $X_{\widebar{\eta}}$ is smooth and only the closed fibre over $t=0$ has a singularity at the origin.
On the other hand, if the characteristic of the base field is $3$, a general fibre is a deformation of a cone and it has two singularities that come together over $t=0$.
On the geometric generic fibre they can be described by the equations $W:= V(x,z,y^3+t)$ and $W':= V(y,z,x^3+t)$.
These are canonical centres of the geometric generic fibre. 
Note that $f|_W$ and $f|_{W'}$ coincide with the Frobenius morphism.

Now, consider the base change with the Frobenius $F\colon Z \rightarrow Z$ and let $X^{(1)}$ be the normalisation of the fibre product.
Let $\tau$ be an element of $k(X^{(1)})$ such that $\tau^3=t$.
The total space $X^{(1)}$ is no longer regular, but has singularities at $W^{(1)}:= V(x,z,y+\tau)$ and $W'^{(1)}:= V(y, z, x+\tau)$.
\end{example}

\begin{example}
    Let $X:= \Pn^{2}_{[x:y:z]} \times \Pn^{1}_{[s:t]}$, $D=V(x^ps-y^pt) \subset X$ and $B:= \frac{1}{p}D$ over an algebraically closed field of characteristic $p>0$.
    Let $f \colon X \rightarrow \Pn^{1}_{[s:t]}$ be the natural projection.
    Consider the base change with $F \colon \Pn^{1} \rightarrow \Pn^{1}$, let $X^{(1)}$ be the normalisation of the fibre product and $\beta_1\colon X^{(1)} \rightarrow X$.
    Then $(X, B)$ is klt, while the geometric generic fibre $(X_{\wb{\eta}}, B_{\wb{\eta}})$ and $(X^{(1)}, \beta_1^*B)$ are strictly log canonical. 
\end{example}


\begin{lemma} \label{l-lc centres under finite maps}
Let $f\colon X \rightarrow Z$ be a separable fibration between normal varieties such that the geometric generic fibre $X_{\widebar{\eta}}$ is normal.
Let $B$ be a $\Q$-divisor on $X$ such that $K_X+B$ is $\Q$-Cartier.
Let $\sigma\colon Z' \rightarrow Z$ be a generically finite separable map and consider the diagram:
\[
\xymatrix{
X' \ar[r]^s \ar[d]_{f'} & X \ar[d]^f \\
Z' \ar[r]^{\sigma} & Z,
}
\]
where $X'$ is the normalisation of the main component of the fibre product $X \times_Z Z'$.
Define a $\Q$-divisor $B'$ on $X'$ by log pullback, so that $K_{X'}+ B'= s^*(K_X+B)$.
Let $\eta$ be the generic point of $Z$.
Then a horizontal subvariety $W \subseteq X$ is a log canonical centre (resp.\ non-log canonical centre) of $(X,B)$ if and only if there exists $W' \subseteq X'$, irreducible component of $s^{-1}(W)$, which is a log canonical centre (resp.\ non-log canonical centre) of $(X', B')$.
\end{lemma}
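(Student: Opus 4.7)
The strategy is to reduce the comparison of discrepancies on $X$ and $X'$ to the observation that, away from the vertical locus, the map $s \colon X' \to X$ is \emph{\'etale}, so discrepancies of horizontal divisorial valuations are preserved under the correspondence $E \leftrightsquigarrow$ (components of $s^{-1}(E)$).

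First I would establish the \'etaleness claim. Since $\sigma \colon Z' \to Z$ is finite separable, there is a dense open $V \subseteq Z$ over which $\sigma$ is \'etale. Let $U := f^{-1}(V) \subseteq X$, whose complement is vertical. Over $U$ the base change $X \times_Z Z' \to X$ is \'etale, and in particular normal, so the normalisation $X'$ agrees with the main component of $X \times_Z Z'$ over $U$. (Here we use that $X_{\bar\eta}$ is normal, which by lemma \ref{l-conductor is vertical} guarantees that the conductor of $X' \to X$ is vertical and hence contained in $X \setminus U$ after possibly shrinking $V$.) Consequently $s\colon X' \to X$ is finite \'etale over $U$.

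Next I would translate the statement into a statement about divisorial valuations. Take a log resolution $\pi \colon Y \to X$ of $(X, B)$ on which every divisorial valuation of interest is realised, and write $K_Y = \pi^{*}(K_X + B) + \sum_E a(E,X,B)\, E$. Form $Y' := $ normalisation of the main component of $Y \times_X X'$ with induced finite map $t \colon Y' \to Y$ and $s' \colon Y' \to X'$. Because \'etaleness is preserved under base change, $t$ is \'etale over the open subset $\pi^{-1}(U) \subseteq Y$; in particular $K_{Y'} = t^{*} K_Y$ over that open set. Since $s$ is \'etale over $U$ we also have $K_{X'} = s^{*} K_X$ there, and by construction $K_{X'} + B' = s^{*}(K_X + B)$. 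Pulling back the discrepancy formula and comparing gives, for every horizontal prime divisor $E$ on $Y$ mapping to a horizontal $W \subseteq X$ and every irreducible component $E'$ of $t^{-1}(E)$, the equality
\[
a(E', X', B') \; = \; a(E, X, B).
\]
The centre of $E'$ in $X'$ is an irreducible component of $s^{-1}(W)$, and as $E'$ ranges over the components of $t^{-1}(E)$ its centres sweep out all such components (because $t$ is surjective on the relevant horizontal part, $s$ being finite).

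Finally I would assemble the two directions. If $W \subseteq X$ is horizontal and is a (non-klt, resp.\ log canonical, resp.\ non-log canonical) centre of $(X,B)$, pick a horizontal divisorial valuation $E$ with centre $W$ and discrepancy $\le -1$ (resp.\ $=-1$, resp.\ $<-1$); choose a component $E'$ of $t^{-1}(E)$, whose centre $W'$ is an irreducible component of $s^{-1}(W)$. By the discrepancy identity above, $W'$ is a centre of $(X', B')$ of the same type. Conversely, starting from a horizontal divisorial valuation $E'$ on $X'$ with centre a component $W'$ of $s^{-1}(W)$, its image $E := t_{*} E'$ (as a valuation on $X$) is horizontal with centre $W$, and again the discrepancies agree. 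This gives the ``if and only if'' in both the log canonical and non-log canonical cases.

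The main technical point (and only real obstacle) is justifying the discrepancy equality $a(E', X', B') = a(E, X, B)$ rigorously, which hinges on two facts: that the \'etale locus of $s$ covers a neighbourhood of the generic point of the horizontal $W$ (handled by lemma \ref{l-conductor is vertical} together with separability of $\sigma$), and that the definition $K_{X'} + B' = s^{*}(K_X + B)$ is compatible with $K_{X'} = s^{*} K_X$ on the \'etale locus. Once these are in place, the equivalence of centres is a formal consequence of the bijection between horizontal divisorial valuations on $X$ and their extensions to $X'$.
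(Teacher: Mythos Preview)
Your argument is correct and follows essentially the same route as the paper: both reduce to the observation that, after shrinking $Z$ so that $\sigma$ is \'etale and (via lemma \ref{l-conductor is vertical}) the fibre product is already normal, the map $s$ is \'etale over the relevant horizontal locus, whence discrepancies of horizontal valuations are preserved. The paper simply packages the discrepancy comparison by citing \cite[proposition 5.20]{KM}, whereas you unwind it explicitly through a base-changed log resolution; the content is the same.
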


\begin{proof}
    By \autoref{l-conductor is vertical}, the conductor of $X'$ is vertical, so, up to shrinking $Z$, we can assume the fibre product is already normal.
    Moreover, up to shrinking $Z$ further, we can assume both $\sigma$ and its Galois closure are étale, so $s$ and its Galois closure are étale as well. In particular, they do not have any (wild) ramification.
    Thus, the discrepancies over $(X, B)$ and $(X', B')$ coincide (\cite[Proposition 5.20]{KM}). 
\end{proof}

\begin{proposition} \label{p-g lc centres}
Let $f\colon X \rightarrow Z$ be a separable fibration between normal varieties such that the geometric generic fibre $X_{\widebar{\eta}}$ is normal.
Let $B$ be a $\Q$-divisor on $X$ such that $K_X+B$, $B^v$ and all the vertical divisors $D$ whose multiplicity is $\ell_D>1$ are $\Q$-Cartier.
Let $\widebar{W}$ be a log canonical (resp.\ non-log canonical) centre of $(X_{\widebar{\eta}}, B_{\widebar{\eta}})$ and let $W$ be a subvariety of $X$ such that an irreducible component of $W_{\widebar{\eta},\red}$ is exactly $\widebar{W}$.
Then, there exists $e \gg 0$ such that $W^{(e)}$ is a log canonical (resp.\ non-log canonical) centre of $(X^{(e)}, B_e)$, where $B_e:=\beta_e^*B^h +\beta_e^{-1}B^v$.
\end{proposition}

\begin{proof}
Since $\widebar{W}$ is a log canonical (resp.\ non-log canonical) centre of $(X_{\widebar{\eta}}, B_{\widebar{\eta}})$, there exists $\widebar{Y} \rightarrow X_{\widebar{\eta}}$ proper birational map that extracts an exceptional divisor $\widebar{E}$ over $\widebar{W}$ with discrepancy $-1$ (resp.\ $<-1$).

By \autoref{l-spread-out}, there is a generically finite map $\varphi\colon Z' \rightarrow Z$ such that, if $X'$ is the normalisation of the main component of $X \times_Z Z'$, there exists $\sigma \colon Y \rightarrow X'$ birational with $Y_{\widebar{\eta}}= \widebar{Y}$.
By possibly taking a further purely inseparable base change and shrinking $Z$ if necessary, by \autoref{l-split extensions} the map $\varphi$ can be decomposed as $F^e \circ \psi$, with $\psi$ separable and $e \in \N$, so that we have the following diagram:
\[
\xymatrix{
Y \ar[r]^{\sigma} & X' \ar[r]^{\gamma} \ar[d] & X^{(e)} \ar[d] \ar[r]^{\beta_e} & X \ar[d]^f \\
& Z' \ar[r]^{\psi} & Z \ar[r]^{F^e} & Z.
}
\]
Let $B_e:= \beta_e^*B^h + \beta_e^{-1}B^v$ and define $B'$ by log pullback from $X^{(e)}$, so ${K_{X'}+B'}= \gamma^*(K_{X^{(e)}}+ B_e)$.
Note that, by \autoref{t-formulina_general} and the $\Q$-Cartier assumptions in the statement, $K_{X^{(e)}}+B_e$ is $\Q$-Cartier.
Since $X_{\widebar{\eta}}$ is normal, by \autoref{l-conductor is vertical}, $X'_{\widebar{\eta}}= X_{\widebar{\eta}} = X^{(e)}_{\widebar{\eta}}$, and $\beta_e$ and $\gamma$ are isomorphisms on the geometric generic fibre, therefore $B_{\widebar{\eta}}=B_{e,\widebar{\eta}}= B'_{\widebar{\eta}}$.

Let $E \subset Y$ be an exceptional divisor on $Y$ such that $E_{\widebar{\eta}}= \widebar{E}$. The discrepancy of $E$ with respect to the pair $(X',B')$ can be computed at $\widebar{\eta}$, therefore it is $-1$ (resp.\ $<-1$).

In the non-log canonical case, this proves that $\sigma(E)$ is a non-log canonical centre of $(X', B')$ and $(\beta_e \circ \gamma \circ \sigma)(E)=W$, therefore we conclude by \autoref{l-lc centres under finite maps}.

In the log canonical case, we have verified that $E$ is a place over $\sigma(E)$ with discrepancy $-1$ with respect to $(X',B')$ and such that $(\beta_e \circ \gamma \circ \sigma)(E)=W$.
Now, let $b\colon Y' \to X'$ be a proper birational map with an exceptional divisor $E' \subseteq Y'$ such that $(\beta_e \circ \gamma \circ b)(E')=W$. We prove that the discrepancy of $E'$ over $(X', B')$ is $\geq -1$.
Indeed, let $C'$ be the strict transform of $B'$ in $Y'$ and let $a$ be the discrepancy of $E'$. Then,
\[
K_{Y'}+C'=b^*(K_{X'}+B')+aE'+R,
\]
where $R$ is an exceptional $\Q$-divisor whose support does not contain $E'$.
We now consider the restriction of the equality to $(Y'_{\wb{\eta}})^{\nu}$.
Let $\wb{b} \colon (Y'_{\wb{\eta}})^{\nu} \to X'_{\wb{\eta}}=X_{\wb{\eta}}$
By \cite[Theorem 1.1]{LMMP} (see also \cite[Theorem 4.2]{pi_basechange}), there exists an effective divisor $D$ such that $K_{(Y'_{\wb{\eta}})^{\nu}}+D+C'_{\wb{\eta}}=\wb{b}^*(K_{X_{\wb{\eta}}}+B_{\wb{\eta}})+aE'_{\wb{\eta}}+R_{\wb{\eta}}$.
Let $d \geq 0$ be the coefficient of $E'_{\wb{\eta},\red}$ in $D$.
Since $\wb{W}$ is a log canonical centre of $(X_{\wb{\eta}}, B_{\wb{\eta}})$, we have that $ac-d \geq -1$ for some $c >0$, whence $a \geq -1$.
We conclude that $W^{(e)}$ is a log canonical centre of $(X^{(e)}, B_e)$ by \autoref{l-lc centres under finite maps}.
\end{proof}

\begin{definition} \label{d-cycleG}
    Let $X$ be a variety defined over a perfect field $k$ and let $\widebar{k}$ be the algebraic closure of $k$. Define $\widebar{X}:= X \times_k \widebar{k}$.
    If $W \subseteq \widebar{X}$ is a subvariety, let $k \subseteq k' \subseteq \wb{k}$ be a (separable) Galois finite extension of $k$ over which $W$ is defined.
    Let $G:= \mathrm{Gal}(\widebar{k}/k)$ and $G':= \mathrm{Gal}(k'/k)$.
    Then
    $
    \sum_{g \in G'} g(W)
    $
    descends to a cycle $\mathcal{C}$ on $X$ defined over $k$.
    Let $\mathcal{C}:= \sum a_i C_i$ be the decomposition of $\mathcal{C}$ into irreducible components defined over $k$, then we define $W^G:= \sum C_i$.
    Note that an integral component of $W^G \times_k \widebar{k}$ is exactly $W$.
\end{definition}

\begin{proposition} \label{l-geometric blowup}
    Assume the existence of log resolutions in dimension up to $n-1$. 
    Let $f\colon X \rightarrow Z$ be a separable fibration between normal projective varieties, where $\dim(X)=n$ and let $B \geq 0$ be a $\Q$-divisor on $X$ such that $K_X+B$ is $\Q$-Cartier.
    Let $\eta$ be the generic point of $Z$.
    Then, for $e \gg 0$, there exists a birational map $Y \to X^{(e)}$ such that $Y_{\widebar{\eta}}$ is a log resolution of $(X_{\wb{\eta}}, B_{\wb{\eta}})$.
    In particular, the strict transform of $B_{\wb{\eta}}$ together with the exceptional divisors is simple normal crossing.
\end{proposition}
    
\begin{proof}
    Since we assume the existence of log resolutions, there exists a sequence of blow-ups $$\wb{X}_N \to ... \to \wb{X}_1 \to \wb{X}_0=X_{\wb{\eta}}$$ at smooth centres $Z_j \subseteq \wb{X}_j$ such that $\wb{X}_N$ is a log resolution of $(X_{\wb{\eta}}, B_{\wb{\eta}})$. 
    Let $L$ be a finite extension of $k(Z)$ over which all the $Z_j$'s are defined.
    By \autoref{l-split extensions}, up to considering a further purely inseparable extension, we can suppose there exists $e \geq 0$ such that $L$ is a separable Galois extension of $K_e := k(Z)^{\frac{1}{p^e}}$. Let $G$ be the Galois group of $L$ over $K_e$.
    
    Let $Z_0^G$ as in \autoref{d-cycleG}. In particular, we can view $Z_0^G$ as a geometrically reduced cycle inside $X^{(e)}_{\eta}$ and an irreducible component of $Z_0^G \times_{\eta} \wb{\eta}$ is exactly $Z_0$.
    By ``spreading-out techniques'' (see \cite[Proof of Corollary 1.10]{DW} and \cite[Lemma 2.25]{invariance}), there exists $W_0 \subseteq X^{(e)}$ such that $W_{0, \eta}=Z_0^G$.
    Let $X_1$ be the blow-up of $X^{(e)}$ at $W_0$.
    Note that there is a map $X_{1, \wb{\eta}} \to \wb{X}_1$ that can be decomposed over $\wb{\eta}$ as a sequence of blow-ups at smooth centres.
    
    Let $Z'_1$ be the strict transform of $Z_1$ inside $X_{1, \wb{\eta}}$. We repeat the above process: we define $Z_1^{'G} \subseteq X_{1, \eta}$ and $W_1 \subseteq X_1$ a spread-out of $Z_1^{'G}$ and we define $X_2$ as the blow-up of $X_1$ at $W_1$.
    We proceed in the same way for all $j=0,...,N-1$.
    Let $Y:= X_N \to X^{(e)}$ be the resulting variety. Note that $Y_{\wb{\eta}} \to \wb{X}_N$ can be decomposed as a sequence of blow-ups at smooth centres, whence the conclusion.
\end{proof}

\begin{theorem} \label{t-property star modifications v2.0}
    Assume the birational dlt LMMP and inversion of adjunction in dimension $n$, and the existence of log resolutions in dimensions $n$ and $n-1$.
    Let $f\colon X \rightarrow Z$ be a fibration from a normal variety $X$ of dimension $n$ onto a normal curve $Z$, such that the geometric generic fibre $X_{\widebar{\eta}}$ is normal.
    Let $B \geq 0$ be a $\Q$-divisor on $X$ such that $K_X+B$, $B^v$ and all the vertical divisors $D$ whose multiplicity is $\ell_D>1$ are $\Q$-Cartier. Assume that the coefficients of $B$ and $B_{\widebar{\eta}}$ are $\leq 1$. 
    Let $\widebar{W}$ be a log canonical centre of $(X_{\widebar{\eta}}, B_{\widebar{\eta}})$ and let $W$ be a subvariety of $X$ such that an irreducible component of $W_{\widebar{\eta},\red}$ is exactly $\widebar{W}$.
    For $e \geq 0$, let $B_e:= \beta_e^*B^h+\beta_e^{-1}B^v$.
    Then, for all $e \gg 0$, there exists a $(\ast)$-modification of $(X^{(e)}, B_e)$ which extracts an exceptional divisor $\tilde{E}$ over $W^{(e)}$ with discrepancy $-1$.
    More precisely, there exist a dlt GGLC pair $(Y/Z, C+\tilde{E})$ with $Y$ $\Q$-factorial, $C \geq 0$, $\lfloor C+\tilde{E} \rfloor - \sum_{E \subseteq \Exc(\mu)} E \geq 0$, and a diagram:
    \[
    \xymatrix{
    Y \ar[r]^{\mu} \ar[dr]_{g} & X^{(e)} \ar[d]^{f_e} \\
     & Z,
    }
    \]
    where $\mu$ is a proper birational map which extracts $\tilde{E}$ over $W^{(e)}$ and $g$ is a fibration.
    Moreover, there exist an effective exceptional $\Q$-divisor $R$ on $Y$ supported on non-log canonical places of $(X^{(e)},B_e)$, and a vertical effective $\Q$-divisor $G$ on $Y$, such that
    \[
    K_Y+ C +\tilde{E} + R = \mu^*(K_{X^{(e)}}+B_e) + G.
    \]
    Additionally, $(K_Y+C+\tilde{E})$ is $\mu$-nef.
\end{theorem}

\begin{proof}
    \noindent \textbf{Step 1.} First of all note that $(X_{\widebar{\eta}}, B_{\widebar{\eta}})$ is not necessarily log canonical. In this first step we find a model over $(X^{(e)}, B_e)$ that satisfies the GGLC property.

    By \autoref{p-g lc centres}, for $e \gg 0$, $W^{(e)}$ is a log canonical centre of $(X^{(e)}, B_e)$.
    By \autoref{l-geometric blowup}, up to choosing a bigger $e \geq 0$, there exists $\sigma\colon X_1 \rightarrow X^{(e)}$ which is a log resolution at the geometric generic point of $Z$.
    By possibly considering a higher model obtained as a sequence of blow-ups at vertical smooth centres, we can suppose $X_1$ is a log resolution of $(X^{(e)}, B_e)$.
    Let $B_1 := \sigma^{-1}_*B_e+ \sum_{E \subseteq \Exc(\sigma)} E$ and write
    \[
    K_{X_1}+B_1+R_1= \sigma^*(K_{X^{(e)}}+B_e)+T_1,
    \]
    where both $R_1$ and $T_1$ are effective, $\sigma$-exceptional and they do not have any common components.    
    Note that $R_1$ is supported on non-log canonical places of $(X^{(e)}, B_e)$. Moreover, $(X_1, B_1)$ and $(X_{1,\widebar{\eta}}, B_{1,\widebar{\eta}})$ are log smooth.
    
    Now, we run a $(K_{X_1}+B_1)$-MMP over $X^{(e)}$. Note that this is also an MMP over $Z$. Let $\varphi\colon X_1 \dashrightarrow X'_1$ and $\tau\colon X'_1 \rightarrow X^{(e)}$ be the resulting morphisms, $B'_1:= \varphi_*B_1$ and $R'_1:=\varphi_*R_1$. Since $T_1$ and $R_1$ do not share any component, by the Negativity lemma (see \cite[Lemma 3.39]{KM}), $\varphi_*T_1=0$, so that:
    \[
    K_{X'_1}+B'_1+R'_1 = \tau^*(K_{X^{(e)}}+B_e),
    \]
    where $R'_1$ is $\tau$-exceptional and supported on non-log canonical places of $(X^{(e)}, B_e)$.
    By \autoref{l-normal_afterMMP} $X'_{1, \wb{\eta}}$ is normal.
    Moreover, both $(X'_1, B'_1)$ and $(X'_{1,\widebar{\eta}}, B'_{1,\widebar{\eta}})$ are dlt and $\Q$-factorial by \cite[proof of Corollary 3.44]{KM}.
    
    \noindent \textbf{Step 2.} In this step, we extract a log canonical place over $W^{(e)}$.

 Note that the strict transform of $W^{(e)}$ in $X'_1$ must be a stratum of the boundary. Since $(X'_{1,\widebar{\eta}}, B'_{1,\widebar{\eta}})$ is dlt, all the horizontal strata are geometrically reduced over $\eta$.
    Therefore, it is possible to extract a log canonical place $E_2$ as a sequence of blow-ups at smooth centres which are also geometrically smooth over $\eta$.
    
    Let $s\colon X_2 \rightarrow X'_1$ be the resulting birational morphism extracting a log canonical place $E_2$ over $W^{(e)}$.
    Define $B_2:=s^{-1}_*B'_1 +\sum_{E \subseteq \Exc(s)}E-E_2$ and $T_2$ effective $\Q$-divisors so that $$K_{X_2}+B_2+E_2=s^*(K_{X'_1}+B'_1)+T_2.$$
    Note that $X_2$ is $\Q$-factorial and, by construction, $(X_{2, \wb{\eta}}, B_{2, \wb{\eta}}+E_{2, \wb{\eta}})$ and $(X_2, B_2+E_2)$ are dlt.
    Run a $(K_{X_2}+B_2+E_2)$-MMP over $X^{(e)}$. Let $\psi\colon X_2 \dashrightarrow X'_2$ and $t \colon X'_2 \rightarrow X^{(e)}$ be the resulting morphisms, $B'_2:= \psi_*B_2$ and $E'_2:=\psi_*E_2$. Note that 
    all the contracted (or flipped) curves are in the support of $T_2$ because $K_{X_1'}+B_1'$ is $\tau$-nef; in particular, $E_2$ is not contracted.
    By the Negativity lemma (see \cite[Lemma 3.39]{KM}), $\psi_*T_2=0$, so that:
    \[
    K_{X'_2}+B'_2+E'_2 + R'_2 = t^*(K_{X^{(e)}}+B_e),
    \]
    where $R'_2:=\psi_*s^*R'_1$ is $t$-exceptional and supported on non-log canonical places of $(X^{(e)}, B_e)$.
    By \autoref{l-normal_afterMMP}, $X'_{2, \wb{\eta}}$ is normal.
    Moreover, both $(X'_2, B'_2+E'_2)$ and $(X'_{2,\widebar{\eta}}, B'_{2,\widebar{\eta}}+E'_{2,\widebar{\eta}})$ are dlt 
    and $\Q$-factorial and $K_{X'_2}+B'_2+E'_2$ is $t$-nef.
    
    \noindent \textbf{Step 3.} In this step, we find a $(\ast)$-modification of $(X'_2, B'_2+E'_2)$.
    
    We apply \autoref{t-property star modifications v1.2} to $(X'_2, B'_2+E'_2)$ to find a $(\ast)$-modification ${(Y'/Z, C'+E')}$ with $\lambda\colon Y' \rightarrow X'_2$ birational, where $E'$ is the strict transform of $E'_2$. Let $g'\colon Y' \to Z$ be the induced fibration.
    In particular,
    \begin{enumerate}
        \item $\lambda$ is an isomorphism over an open subset of $Z$;
        \item $(Y'/Z, C'+E')$ is GGLC and satisfies Property $(\ast)$;
        \item $Y'$ is $\Q$-factorial, $(Y', C'+E')$ is dlt, and $\lfloor C'+E'\rfloor - \sum_{E \subseteq \Exc(\mu')} E$;
        \item there exists an effective $\Q$-divisor $G'$, such that $K_{Y'}+ C'+E' = {\lambda^*(K_{X'_2}+B'_2+E'_2) + G'}$ and $G'$ is $g'$-vertical;
        \item if $U:=Y' \setminus \supp(G')$, $(K_{Y'}+C'+E')|_{U}$ is $\mu'$-nef, where $\mu'\colon Y \rightarrow X^{(e)}$;
        \item let $R':=\lambda^*R'_2$, then $K_{Y'}+C'+E'+R'=\mu'^*(K_{X^{(e)}}+B_e)+G'$.
    \end{enumerate}
    
\noindent\textbf{Step 4.} As last step, we run a $(K_{Y'}+C'+E')$-MMP over $X^{(e)}$.
    
    By point (e) in Step 3, this MMP is an isomorphism on $U$, whence it does not contract $E'$.
    Let $Y$ be the resulting variety, $\mu\colon Y \to X^{(e)}$ the induced morphism, and $C, \tilde{E}, R$ and $G$ the push-forward on $Y$ of $C',E', R'$ and $G'$, respectively. 
    Let $g \colon Y \to Z$ be the induced morphism.
    Then:
    \begin{enumerate}
        \item $(Y/Z, C+\tilde{E})$ is GGLC and satisfies Property $(\ast)$ by \autoref{p-properties property star};
        \item $Y$ is $\Q$-factorial, $(Y, C+\tilde{E})$ is dlt, and $\lfloor C+\tilde{E}\rfloor - \sum_{E \subseteq \Exc(\mu)} E$;
        \item $K_{Y}+ C+\tilde{E}+R = \mu^*(K_{X^{(e)}}+B_e)+G$;
        \item $K_{Y}+C+\tilde{E}$ is $\mu$-nef.
    \end{enumerate}
\end{proof}

\subsection{Adjunction of the moduli part} \label{s-adjunction}

Now, we study the restriction of the moduli part to geometric log canonical centres for fibrations onto curves.

\begin{lemma} \label{l-Snormal}
    Let $(X, B+S)$ be a dlt pair over $k$ with $B \geq 0$, where $S$ is a prime divisor and $X$ is a normal $\Q$-factorial variety.
    Assume $\mathrm{char}(k)= p>2$.
    Then, the normalisation morphism $S^{\nu} \rightarrow S$ is an isomorphism in {codimension $1$}.
    Moreover, if either
    \begin{itemize}
        \item[(a)] $X$ has dimension $\leq 3$ and $\mathrm{char}(k)=p>5$, or
        \item[(b)] $S$ satisfies the $S_2$ property,
    \end{itemize}
    then $S$ is normal.
\end{lemma}

\begin{proof}
    The pair $(S^{\nu}, B_{S^{\nu}})$ induced on $S^{\nu}$ by adjunction is log canonical.
    Moreover, by \cite[Lemma 2.1]{mixedMMP}, $S^{\nu} \rightarrow S$ is a universal homeomorphism, therefore the singularities in codimension $1$ are worse than nodal for $p>2$ by \cite[Proposition 3.1.12]{nodes}.
    Thus, if $S^{\nu} \rightarrow S$ was not an isomorphism in codimension $1$, the conductor would have coefficients $>1$, leading to a contradiction.

    If $X$ is a threefold over an algebraically closed field of characteristic $p>5$, the claim is proven in \cite[Lemma 5.2]{Bir}. If $X$ is defined over a perfect field $k$ of characteristic $p>5$, by the same argument, we conclude that $\widebar{S} := S \times_{k} \widebar{k}$ is normal, which implies that $S$ is normal by \cite[Tag 0C3M]{stacks}. 
    On the other hand, if $S$ satisfies the $S_2$ property, the pair $(S, B_S)$ induced on $S$ by adjunction is slc and regular in codimension $1$, whence the conclusion. 
\end{proof}

\begin{remark}
If $X$ has dimension $>3$, or the characteristic of the base field is $p \leq 5$, then it is not true in general that plt centres are normal (see \cite{plt_centers_Fabio, plt_centers_PaoloTanaka}).
\end{remark}

We now set some notation that we will use in the following.
\leqnomode
\begin{flalign} \label{setup}
    \tag{\raisebox{-0.5ex}{\FourStarOpen}} \hspace{7mm} \text{\underline{Set-up}} &&
\end{flalign}
\reqnomode

\begin{itemize}
\item[(o)] Assume inversion of adjunction in dimensions $n$ and $n-1$ and the existence of log resolutions in dimensions $n-1$ and $n-2$. Assume that $\mathrm{char}(k)=p>2$.
\item[(i)] Let $f\colon X \rightarrow Z$ be an equidimensional fibration between normal varieties, where $n:=\dim(X)$ and $Z$ is a curve. Let $(X/Z, B+S)$ be a GGLC pair associated with it, where $S$ is a prime horizontal divisor and $B \geq 0$.
\item[(ii)] Assume that $X$ is $\Q$-factorial and $(X, B+S)$ is dlt. Then, by \autoref{l-Snormal}, $S$ is normal in codimension $1$.
\item[(iii)] Let $(S^{\nu}, B_{S^{\nu}})$ be the pair induced on $S^{\nu}$ by adjunction.
\item[(iv)] Suppose that $(X/Z, B+S)$ satisfies Property $(\ast)$.
\item[(v)] Let $f|_{S^{\nu}}= g \circ \varphi$ be the Stein factorisation of $f|_{S^{\nu}}$; in particular $g$ is a fibration and $\varphi\colon Z' \rightarrow Z$ is a finite morphism.
\end{itemize}

\begin{remark} \label{r-multiplicities_adjunction}
    In the above Set-up \autoref{setup}, since $(X/Z, B+S)$ is GGLC, if $\widebar{\eta}$ is the geometric generic point of $Z$, $S_{\widebar{\eta}}$ is reduced, therefore, by \autoref{p-reducedness}, $\varphi$ is separable.
Moreover, we have some control over the ramification of $\varphi$.
Indeed, by point (ii) in Set-up \autoref{setup}, without loss of generality, we can suppose $S=S^{\nu}$. Let $\delta \subseteq Z$ be a prime divisor. Since $(X/Z, B+S)$ has Property $(\ast)$, $(X, S+ f^{-1}(\delta))$ is log canonical around $f^{-1}(\delta)$, whence the induced pair on $S$ has coefficients $\leq 1$ around $f^{-1}(\delta)|_{S}$ by adjunction. In particular, $f^{-1}(\delta)|_{S}$ is reduced.
Let $D$ be a prime vertical divisor in $X$ and $D_S$ a prime component of $D|_S$.
If we work around a general point of $D_S$ and we let $\delta':= g(D_S)$ and $\delta:= f(D)$, we can write $\varphi^*\delta= m \delta'$, $g^*\delta'=n D_S$ and $f^*\delta= \ell D$. Then, $\ell = mn$.

Recall that, if $\delta' \subseteq Z'$ is a prime divisor such that its multiplicity with respect to $\varphi$ is divisible by $p$, $\delta'$ is called a divisor of \textbf{wild ramification}. If the multiplicity is $\geq 2$ and coprime with $p$, $\delta'$ is a divisor of \textbf{tame ramification}.
If $D$ is a wild component for $f$, then $\delta':= g(D)$ may be a divisor of wild ramification for $\varphi$. On the other hand, if $D$ is tame, then $\delta'$ is either unramified or of tame ramification.
\end{remark}

\begin{proposition} \label{p-property star adjunction}
In Set-up \autoref{setup}, the pair $(S^{\nu}/Z', B_{S^{\nu}})$ associated to the fibration $g\colon S^{\nu} \to Z'$ is GGLC and satisfies Property $(\ast)$.
Moreover, the vertical parts satisfy $(B_{S^{\nu}})^v= B^v|_{S^{\nu}}$.
\end{proposition}

\begin{proof}
First of all, note that the geometric generic points of $Z$ and $Z'$ coincide and the pair $(S_{\widebar{\eta}}^{\nu}, B_{S_{\widebar{\eta}}^{\nu}})$ is log canonical by adjunction.
By the universal property of the normalisation $(S^{\nu})_{\widebar{\eta}}^{\nu} = S_{\widebar{\eta}}^{\nu}$, and by \autoref{l-GGLC normal fibre} $S_{\widebar{\eta}}^{\nu}= (S^{\nu})_{\wb{\eta}}$ is normal.
Thus, $(S^{\nu}/Z', B_{S^{\nu}})$ is GGLC.

Since $S^{\nu} \rightarrow S$ is an isomorphism in codimension $1$, for the purpose of the proof, we can assume $S$ is normal.
Let us verify that $(S/Z', B_{S})$ satisfies Property $(\ast)$. Let $\Sigma_Z$ be the discriminant part of $(X/Z, B+S)$.
Since $X$ is $\Q$-factorial, we define $B^h_S$ by adjunction, so that $(K_X+B^h+S)|_S=K_S+B^h_S$ and $K_S+B_S=K_S+B^h_S+ B^v|_S$.

\begin{itemize}
    \item[(a)] We claim that $B^h_S$ does not contain any vertical component. In particular, $(B_S)^v=B^v|_S$ and $(B_S)^h=B^h_S$.
    Indeed, if this was not the case, let $D_S$ be a vertical divisor contained in $\Supp(B^h_S)$ and $z := f|_S(D_S)$.
    Since $(X, B^h+S+f^{-1}(z))$ is log canonical around $z$, $(S, B^h_S+D_S)$ is log canonical around $z$ as well. Therefore, the coefficient of $D_S$ in $B^h_S$ must be $0$, which is a contradiction.
    To conclude, note that $(B^v)|_S \leq (B_S)^v$ since $S$ is horizontal.
    
    \item[(b)] Let $\Sigma_{Z'}:= \varphi^{-1}(\Sigma_Z)$. Then $B_S^v = f|_S^{-1}(\Sigma_{Z})= g^{-1}(\Sigma_{Z'})$ and, since $Z'$ is a normal curve, $(Z', \Sigma_{Z'})$ is log smooth.
    
    \item[(c)] Let $z' \in Z' \setminus \Supp(\Sigma_{Z'})$ be a closed point and $z:= \varphi(z') \in Z \setminus \Supp(\Sigma_Z)$. In particular, $f$ is unramified at $z$, therefore, by the computations in \autoref{r-multiplicities_adjunction}, both $\varphi$ and $g$ are unramified at $z'$.
    Since $(X, B +S +f^*z)$ is log canonical around $f^{-1}(z)$, $(S, B_S+f|_S^*z)= (S, B_S +g^*z' +D)$ is log canonical around $g^{-1}(z')$ by adjunction, where $f|_S^*z = {g^*z'+D}$. \qedhere
\end{itemize}
\end{proof}

\begin{proposition} \label{p-adjunction moduli}
In Set-up \autoref{setup}, define $M_X$ and $M_{S^{\nu}}$ to be the moduli parts of $(X/Z, B+S)$ and $(S^{\nu}/Z', B_{S^{\nu}})$, respectively. Then,
\[
    M_{S^{\nu}}= M_X|_{S^{\nu}} - \sum_{D_S} v_{D_S} D_S,
\]
where the sum is taken over the divisors $D_S$ that lie over points of wild ramification of $\varphi$ and the coefficients $v_{D_S}$ are strictly positive integers.
\end{proposition}

\begin{proof}
For the sake of the proof, we assume $S=S^{\nu}$.
By \autoref{p-property star adjunction}, $(S/Z', B_S)$ satisfies {Property $(\ast)$} and $(B_S)^v=B^v|_S$, therefore, by \autoref{p-M_X for property star},
\[
M_X = K_X +B^h -f^*K_Z - R(f) \quad \text{and} \quad
M_S = K_S + B_S^h -g^*K_{Z'} -R(g),
\]
where $B^h_S$ satisfies $(K_X+B^h)|_S=K_S+B_S^h$.
By the Hurwitz formula (see \cite[Proposition 2.3, Chapter IV]{H}),
\[
    K_{Z'}= \varphi^*K_Z + \sum_i (m_i -1)z'_i + \sum_j (a_j+1)z'_j,
\]
where the first sum is taken over all ramified point of $\varphi$, with $m_i$ being their multiplicity, and the second sum only over the wildly ramified points of $\varphi$, with $a_j$ being a non-negative integer.
Now, let us do the computations locally around a vertical prime divisor $D_S \subseteq S$.
Let $D \subseteq X$ be a vertical prime divisor such that $D_S \leq D|_S$, $z:=f(D)$ and $z':=g(D_S)$. We can suppose $f^*z= \ell D$, $\varphi^*z=mz'$, $g^*z'=nD_S$ and $\ell=mn$.
If $\varphi$ is tamely ramified at $z'$, locally we have:
\begin{align*}
 M_S & = K_S + B_S^h -g^*K_{Z'} -(n-1)D_S \\
 & = K_S+B_S^h-f|_S^*K_Z-(m-1)g^*z'-(n-1)D_S\\
 & = (K_X+B^h-f^*K_Z -(mn-1)D)|_S= M_X|_S.
\end{align*}
If $\varphi$ is wildly ramified at $z'$, locally we have:
\begin{align*}
 M_S & = K_S + B_S^h -g^*K_{Z'} -(n-1)D_S\\
 & = K_S+B_S^h-f|_S^*K_Z-g^*(m+a)z'-(n-1)D_S\\
 & = (K_X-f^*K_Z -(mn-1)D)|_S -(an+n)D_S= M_X|_S - (an+n)D_S,
\end{align*}
for some $a\geq 0$. Set $v_{D_S}:= an+n$ to conclude.
\end{proof}

\begin{remark}
    In Set-up \autoref{setup}, let $\Sigma_Z$ be the discriminant part of $(X/Z, B+S)$.
    Define $\Sigma_{Z'}:= \varphi^{-1}(\Sigma_Z)$.
    If $\dim(Z) \geq 2$, then $(Z', \Sigma_{Z'})$ may not even be log canonical, due to the possible presence of wild ramification in $\varphi$. Therefore, 
    we cannot conclude a formula similar to the one in \autoref{p-adjunction moduli} for adjunction of the moduli part.
\end{remark}

\section{The Bend and Break theorem for the moduli divisor} \label{s-B&B}

A crucial step in the proof of positivity of the moduli part in characteristic $0$ is the Cone theorem for algebraically integrable foliations \cite[Theorem 3.9]{positivity}: if the canonical bundle of a foliation is not nef, we can construct rational curves that are \emph{tangent to the foliation}.
The idea is to find them by applying {Miyaoka--Mori's} Bend and Break theorem \cite{MM} to the variety reduced modulo a big enough prime (see \cite{SB}). 
If our setting is in positive characteristic to start with, we do not have the possibility of changing the prime.
However, for fibrations under Property $(\ast)$ conditions, we can relate the moduli part to the canonical divisor of the variety obtained after a Frobenius base change using \autoref{t-formulina_general}.
This is a key ingredient for our strategy and it constitutes one of the main differences between the situation in characteristic $0$ and over fields of positive characteristic.
Note that our result \autoref{p-non-big B vertical} is not a Cone theorem for foliations in positive characteristic. In fact, this latter is known not to hold (\cite{counterexamples_Fabio}).

In this section we use the notation of \autoref{construction} in \autoref{s-purelyinsep_basechange}.

We will use the following version of the Bend and Break theorem.

\begin{theorem}[{Bend and Break theorem, \cite[Theorem II.5.8]{B&B}}] \label{t-classicB&B}
Let $X$ be a normal projective variety of dimension $n$ over an algebraically closed field of any characteristic and let $\xi$ be a smooth curve such that $X$ is smooth around $\xi$. Let $x \in \xi$ be a general point and $G$ a nef divisor on $X$.
If $K_X \cdot \xi <0$, there exists a rational curve $\zeta_x$ such that $x \in \zeta_x$ and
\[
    G \cdot \zeta_x \leq 2n \frac{G\cdot \xi}{-K_X \cdot \xi}.
\]
\end{theorem}

The next result is a direct consequence of the Bend and Break theorem; the proof follows \cite[Theorem 6.1]{KMM94} and \cite[Corollary 2.28]{Foliations-Calum}.

\begin{corollary} \label{t-B&B}
Let $X$ be a normal projective variety of dimension $n$ over an algebraically closed field of any characteristic and let $D_1, ..., D_n, G$ be nef divisors on $X$.
Assume that 
\begin{itemize}
    \item[(a)] $D_1 \cdot ... \cdot D_n=0$;
    \item[(b)] $K_X \cdot D_2 \cdot ... \cdot D_n <0$.
\end{itemize}
Then, for $x \in X$ general point, there is a rational curve $\zeta_{x} \subseteq X$ containing $x$ such that
\begin{itemize}
    \item[(i)]$ G \cdot \zeta_x \leq 2n \frac{G \cdot D_2 \cdot ... \cdot D_n}{-K_X \cdot D_2 \cdot ... \cdot D_n}$;
    \item[(ii)] $D_1 \cdot \zeta_x =0$.
\end{itemize}
\end{corollary}

\begin{proof} 
Let $H$ be an ample divisor and $0 \leq \varepsilon \ll 1$ small enough such that, if $H_i:=D_i + \varepsilon H$ for $i=2, ..., n$, we have that $K_X \cdot H_2 \cdot ... \cdot H_n <0$.
Now, choose $m_i \in \N$ such that $m_iH_i$ are very ample for $i=2, ..., n$ and let $\xi$ be a curve in the intersection of the linear systems $|m_iH_i|$, passing through a general point of $X$ and such that $\xi$ is smooth and contained in the regular locus of $X$.
Let $G_{\varepsilon, k}:= kD_1+G+\varepsilon H$.
By \autoref{t-classicB&B}, there exists $\zeta_{\varepsilon ,k}$ through a general point of $\xi$ such that
\begin{equation} \leqnomode \tag{\raisebox{-0.5ex}{\EightStarTaper}} \label{e-equation B&B}
G_{\varepsilon, k} \cdot \zeta_{\varepsilon,k} \leq 2n \frac{G_{\varepsilon, k} \cdot \xi}{-K_X \cdot \xi}
= 2n \frac{G_{\varepsilon,k}\cdot H_2 \cdot ... \cdot H_n}{-K_X \cdot H_2 \cdot ... \cdot H_n}.
\end{equation}

Since $D_1 \cdot D_2 \cdot ... \cdot D_n=0$, there exists a constant $c \geq 0$ with the following property. For every $k>0$, there exists $\varepsilon_k >0$ such that the RHS of the equation \autoref{e-equation B&B} is $\leq c$ for every $\varepsilon \leq \varepsilon_k$.
Therefore the family $\{ \zeta_{\varepsilon_k, k} \}_{k}$ is bounded.
Since bounded integral points in the cone of curves are finitely many, up to passing to a sub-sequence, we can assume $\zeta_{\varepsilon_k, k} = \zeta$ is constant.

Now, letting $k$ approach $+ \infty$, we get that $(kD_1 +G+\varepsilon_k H) \cdot \zeta$ is bounded, whence $D_1 \cdot \zeta=0$.
Therefore, \autoref{e-equation B&B} becomes
\[
(G+\varepsilon H) \cdot \zeta \leq 2n \frac{(kD_1+ G +\varepsilon H) \cdot H_2 \cdot ... \cdot H_n}{-K_X \cdot H_2 \cdot ...\cdot H_n}.
\]
We conclude by letting $\varepsilon$ go to $0$.
\end{proof}

The next proposition is one of the key steps for the proof of the main result, \autoref{t-goal}. 

\begin{proposition} \label{p-non-big B vertical}
    Assume inversion of adjunction in dimension $n$ and the existence of log resolutions in dimension $n-1$.
    Let $f\colon X \rightarrow Z$ be a fibration between normal projective varieties and $(X/Z, B)$ a GGLC pair of dimension $n$ associated with it over an algebraically closed field of characteristic $p>0$, with $B \geq 0$.
    Assume that $(X/Z, B)$ satisfies Property $(\ast)$ with $X$ $\Q$-factorial and let $M_X$ be the moduli part.
    Assume that there exist $D_2, ..., D_n$ nef $\Q$-divisors, $k \in \Q_{>0}$ and $A$ ample $\Q$-divisor such that:
    \begin{itemize}
    \item[(a)] $(M_X+kA) \cdot D_2 \cdot ... \cdot D_n =0$;
    \item[(b)] $M_X \cdot D_2 \cdot ... \cdot D_n <0$; 
    \item[(c)] $(M_X+kA)$ is nef.
    \end{itemize}
    Then, through a general point $x \in X$, there exists a vertical rational curve $\zeta$ such that $M_X \cdot \zeta<0$.
\end{proposition}
    
\begin{proof}
    By \autoref{r-formulina},
    \[
    \alpha_e^*K_{X^{(e)}} \cdot D_2 \cdot ...\cdot D_n = (p^e-1) (M_X-B^h) \cdot D_2 \cdot ...\cdot D_n + K_X \cdot D_2 \cdot ...\cdot D_n - W_e \cdot D_2 \cdot ... \cdot D_n,
    \]
    where $W_e:=\sum_{D \text{ wild}} w_{D,e}D$ with $w_{D,e} \geq 0$.

    
    Note that $B^h\cdot D_2 \cdot ... \cdot D_n \geq 0$ and $D\cdot D_2 \cdot ... \cdot D_n \geq 0$ for every wild component $D$.
    All in all, for $e \gg 0$, we have $\alpha_e^*K_{X^{(e)}} \cdot D_2 \cdot ...\cdot D_n <0$.

    Let $D_1:= M_X +kA$ and $D_{i,e}:= \beta_e^*D_i$ for $i= 1, ..., n$.
    Note that $\alpha_e^*D_{i,e}=p^e D_i$.
    Then,
    \[
    \alpha_e^*K_{X^{(e)}} \cdot p^e D_2 \cdot ... \cdot p^e D_n = \deg(\alpha_e) K_{X^{(e)}} \cdot D_{2,e} \cdot ... \cdot D_{n,e} <0.
    \]
    Therefore, for $e \gg 0$:
    \begin{itemize}
    \item[(a)] $D_{1,e} \cdot D_{2,e} \cdot ...\cdot D_{n,e}= \deg(\beta_e)D_1 \cdot ... \cdot D_n = 0$;
    \item[(b)] $K_{X^{(e)}} \cdot D_{2,e} \cdot ...\cdot D_{n,e} <0$.
    \end{itemize}
    Let $G_e := \beta_e^* G$, for some ample Cartier divisor $G$ on $X$.
    We apply \autoref{t-B&B} on $X^{(e)}$, $D_{1, e}, ..., D_{n,e}, G_e$ to find, through a general point of $X^{(e)}$, a rational curve $\zeta_e$ such that:
    \begin{itemize}
    \item[(i)] $
    G_e \cdot \zeta_e \leq 2 n \frac{G_e \cdot D_{2,e} \cdot ...\cdot D_{n,e}}{-K_{X^{(e)}} \cdot D_{2,e} \cdot ...\cdot D_{n,e}};
    $
    \item[(ii)] $D_{1,e} \cdot \zeta_e =0$. 
    \end{itemize}
    Let $\zeta_e^X$ be the image of $\zeta_e$ in $X$. By point (ii) and since $A$ is ample, $$M_X \cdot \zeta_e^X <0.$$
    
    To conclude, we show that there exists $e$ such that $\zeta_e^X$ is vertical.
    Assume, for the sake of a contradiction, that this was not the case.
    Let $p^{\varphi(e)}$ be the minimum between the purely inseparable degree of $f|_{\zeta_e^X}$ and $p^e$. Remark that, by \autoref{l-lemma!}, the degree of $\beta_e|_{\zeta_e}$ is $p^{e-\varphi(e)}$.
    We claim that $\varphi(e) = e - O(1)$, where $O(1) \geq 0$ is a bounded function.
    Indeed,
    \begin{align*}
    & p^e G \cdot p^e D_2 \cdot ... \cdot p^e D_n = \alpha_e^*G_e \cdot \alpha_e^*D_{2,e} \cdot ... \cdot \alpha_e^*D_{n,e} \\
    & = \deg(\alpha_e) G_e \cdot D_{2,e} \cdot ... \cdot D_{n,e}.
    \end{align*}
    Therefore:
    \begin{align*}
    & p^{e-\varphi(e)} \leq p^{e- \varphi(e)} G \cdot \zeta_e^X = G_e \cdot \zeta_e \leq 2 n \frac{G_e \cdot D_{2,e} \cdot ...\cdot D_{n,e}}{-K_{X^{(e)}} \cdot D_{2,e} \cdot ...\cdot D_{n,e}} \\
    & = 2 n \frac{p^e G \cdot D_2 \cdot ...\cdot D_n}{-(p^e -1)(M_X-B^h) \cdot D_2 \cdot ...\cdot D_n - K_X \cdot D_2 \cdot ...\cdot D_n + W_e \cdot D_2 \cdot ... \cdot D_n}\\
    & \leq 2 n \frac{p^e G \cdot D_2 \cdot ...\cdot D_n}{-(p^e -1)(M_X-B^h) \cdot D_2 \cdot ...\cdot D_n - K_X \cdot D_2 \cdot ...\cdot D_n},
    \end{align*}
    where the equality on the second line is given by \autoref{r-formulina}.
    The last line is bounded as $e$ grows, whence the claim.
    
    From the above chain of inequalities, we also infer that $G \cdot \zeta_e^X \leq p^{e- \varphi (e)} G \cdot \zeta_e^X$ is bounded. Therefore, $\{ \zeta_e^X \}_e$ is a bounded family of curves,
    hence, up to considering a sub-sequence of indices, it is eventually constant (up to numerical equivalence).
    Now, let $H$ be an ample Cartier divisor on $Z$, by the projection formula, $f^*H \cdot \zeta_e^X = d_e H \cdot f(\zeta_e^X)$, where $d_e$ is the degree of $f|_{\zeta_e^X}$. Since we can assume that $f^*H \cdot \zeta_e^X$ is constant, $d_e$ is bounded.
    However this contradicts the fact that $\varphi(e)=e-O(1)$.
\end{proof}

\begin{remark}
    The Bend and Break theorem needs the base field to be algebraically closed.
    If the base field $k$ is only perfect, the rational curves we find may be defined over an extension of $k$.
    Note that in this case, their Galois orbits descend to curves over $k$ which satisfy similar inequalities.
\end{remark}

\begin{remark}
    If $f\colon X \to Z$ is a separable equidimensional tame fibration between normal projective varieties, a similar proof shows a ``generic'' Bend and Break theorem for $K_{\cF}$, the canonical divisor of the foliation induced by $f$.
    However, if $f$ is not tame, the proof does not go through for $K_{\cF}$ due to the correction term given by the wild components.
\end{remark}

\section{The canonical bundle formula}

In this section, we prove positivity of the moduli part.
First, we prove that, for pairs which satisfy {Property $(\ast)$}, the moduli part is nef. Then, we use $(\ast)$-modifications to recover this situation.

In this section we use the notation of \autoref{construction} in \autoref{s-purelyinsep_basechange}.

\subsection{Base change}

Before stating the theorem, we prove that we can assume the field $k$ to be algebraically closed.

\begin{lemma} \label{l-perfect nefness}
Let $f\colon X \rightarrow Z$ be a fibration between normal projective varieties over a perfect field $k$ and $D$ a $\Q$-Cartier $\Q$-divisor on it.
Let $\widebar{k}$ be the algebraic closure of $k$, $\widebar{f} \colon \widebar{X} \rightarrow \widebar{Z}$ the base change of $f$ with $\widebar{k}$, and $\widebar{D}:= D \times_k \widebar{k}$.
Then, $D$ is $f$-nef if and only if $\widebar{D}$ is $\widebar{f}$-nef.
\end{lemma}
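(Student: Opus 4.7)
The plan is to transfer intersection numbers between $X$ and $\bar{X}$ in both directions, using two standard facts: intersection numbers are preserved by flat base change, and $\bar{D}$ is Galois invariant because it is pulled back from $D$.

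For the ``if'' direction, assume $\bar{D}$ is $\bar{f}$-nef and take any irreducible curve $C \subset X$ contracted by $f$. Since $k$ is perfect, $\bar{C} := C \times_k \bar{k}$ is reduced, so it decomposes as a finite union $\sum_i \bar{C}_i$ of irreducible components. Each $\bar{C}_i$ maps into the preimage of the finite set $f(C) \subset Z$, hence is contracted by $\bar{f}$, so $\bar{D} \cdot \bar{C}_i \geq 0$ by assumption. Flat base change then gives
\[
D \cdot C \;=\; \bar{D} \cdot \bar{C} \;=\; \sum_i \bar{D} \cdot \bar{C}_i \;\geq\; 0.
\]

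For the ``only if'' direction, assume $D$ is $f$-nef and take any curve $\bar{C} \subset \bar{X}$ contracted by $\bar{f}$. Let $k'/k$ be the minimal finite normal extension over which $\bar{C}$ is defined, and set $G' := \mathrm{Gal}(k'/k)$. By the paper's notation, the Galois sum $\bar{C}^G := \sum_{g \in G'} g(\bar{C})$ descends to a cycle $C^G$ on $X$, and each of its components is contracted by $f$ (because each conjugate $g(\bar{C})$ is contracted by $\bar{f}$, which is defined over $k$). Galois invariance of $\bar{D}$ gives $\bar{D} \cdot g(\bar{C}) = \bar{D} \cdot \bar{C}$ for every $g \in G'$, hence
\[
|G'|\,(\bar{D} \cdot \bar{C}) \;=\; \bar{D} \cdot \bar{C}^G \;=\; D \cdot C^G \;\geq\; 0
\]
by $f$-nefness of $D$, so $\bar{D} \cdot \bar{C} \geq 0$.

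The only delicate part is the Galois-descent bookkeeping: one needs that the Galois-invariant cycle $\bar{C}^G$ genuinely descends to a cycle $C^G$ on $X$ whose irreducible components are contracted by $f$, and that $\bar{D} \cdot \bar{C}^G$ on $\bar{X}$ equals $D \cdot C^G$ on $X$. Both are standard consequences of working over a perfect base field and are packaged by the $W^G$ construction recalled immediately before the lemma, so no further work is required.
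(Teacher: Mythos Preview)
Your proof is correct and follows essentially the same approach as the paper: the ``only if'' direction is exactly the paper's Galois-descent argument via $\bar{C}^G$ (the paper writes $D \cdot \xi^G = [k':k]\,\bar{D}\cdot\xi$, which is your $|G'|\,(\bar{D}\cdot\bar{C})$ since $k'/k$ is Galois over a perfect field), and the ``if'' direction, which the paper simply calls ``trivial,'' is precisely the flat-base-change computation you spell out.
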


\begin{proof}
Let $G:= \mathrm{Gal}(\widebar{k}/k)$ and $\xi \subset \widebar{X}$ a curve. 
Note that $\widebar{f}(\xi)$ has dimension $0$ if and only if $f(\xi^G)$ has dimension $0$.
Since $D \cdot \xi^G$ is a positive multiple of $\widebar{D} \cdot \xi$, if $D$ is $f$-nef, then $\widebar{D}$ is $\widebar{f}$-nef as well.
The converse is trivial.
\end{proof}

\begin{lemma} \label{l-log canonical base change}
    Let $(X,B)$ be a pair over a perfect field $k$ and $(\widebar{X}, \widebar{B})$ its base change to the algebraic closure $\widebar{k}$.
    Then $(X,B)$ is log canonical if and only if $(\widebar{X}, \widebar{B})$ is.
\end{lemma}

\begin{proof}
    Note that, since $k$ is perfect, $\widebar{X}$ is normal by \cite[Tag 0C3M]{stacks}. Moreover, by \cite[Tag 01V0]{stacks}, $K_{\widebar{X}} = K_X \times_k \widebar{k}$.

    If $(\widebar{X}, \widebar{B})$ is log canonical, then $(X, B)$ is log canonical.
Indeed, let $Y \rightarrow X$ be a birational map over $X$ and $E$ an exceptional divisor.
Consider the base change with $\widebar{k}$, $\widebar{Y} \rightarrow \widebar{X}$, $\widebar{E} \subseteq \widebar{Y}$.
Since $k$ is perfect, $\widebar{E}$ is reduced by \cite[Tag 020I]{stacks}, thus the discrepancy of $E$ over $X$ coincides with the discrepancy of $\widebar{E}$ over $\widebar{X}$.

Conversely, assume for the sake of a contradiction that there is a non-log canonical place ${E' \subseteq Y' \xrightarrow{\sigma} \widebar{X}}$ over $\widebar{X}$ with discrepancy $a' <-1$.
Without loss of generality, we can assume $E', Y'$ and $\sigma$ are defined over $k'$, a finite Galois extension of $k$.
Let $R:=\cO_{Y', \eta_{E'}}$, where $\eta_{E'}$ is the generic point of $E'$, let $\mathfrak{m}_R:=(t)$, where $t \in k'(Y')$ is a local equation for $E'$, and let $v'$ be the valuation induced by $E'$ on $k'(Y')$.
Let $t_0=t, t_1, ..., t_d$ be the Galois conjugates of $t$.
Note that, since $k'(Y')$ is a separable extension of $k(X)$, $t_i \neq t_j$ for every $i \neq j$.
Then $v:=v'|_{k(X)}$ is a valuation on $k(X)$ whose associated DVR is $(S:=R \cap k(X), \mathfrak{m}_S:= (\prod_{i=1,...,d} t_i))$.
By \cite[Lemma 2.45]{KM}, there exists a proper birational map $\widetilde{X} \to X$ extracting a divisor $\widetilde{E}$ with induced valuation $v$. Let $a$ be its discrepancy and $\widetilde{X}' \to X'$, $\widetilde{E}'\subseteq \widetilde{X}'$ be the base change to $k'$. Then, $\widetilde{E}'$ is locally the zero locus of $\prod_{i=1,...,d} t_i$ and its discrepancy $\widetilde{a}$ coincides with $a$. 
Let $E_0$ be the zero locus of $t_0$. Since the local ring around $E_0$ is exactly $(R, \mathfrak{m}_R)$, by \cite[Remark 2.23]{KM}, its discrepancy coincides with $a'$.
Therefore, $a=\widetilde{a}=a'<-1$, which is a contradiction.
\end{proof}

\begin{corollary} \label{l-perfect moduli part ecc}
Assume inversion of adjunction in dimension $n$ and the existence of log resolutions in dimension $n-1$. Let $f\colon X \rightarrow Z$ be a fibration between normal varieties over a perfect field $k$, where $\dim(X)=n$. Let $(X/Z, B)$ be a GGLC pair associated with it with $B \geq 0$.
Suppose that $(X/Z, B)$ satisfies Property $(\ast)$.
Let $\widebar{k}$ be the algebraic closure of $k$, $\widebar{f} \colon \widebar{X} \rightarrow \widebar{Z}$ the base change of $f$ with $\widebar{k}$, $\widebar{B}:= B \times_k \widebar{k}$.
Then $(\widebar{X}/\widebar{Z}, \widebar{B})$ is GGLC, it satisfies Property $(\ast)$, $B_{\widebar{Z}} = B_Z \times_k \widebar{k}$ and $M_{\widebar{X}}= M_X \times_k \widebar{k}$.
\end{corollary}

\begin{proof}
If $Y$ is a variety over $k$, we denote by $\widebar{Y}:= Y \times_k \widebar{k}$. Similarly, if $D$ is a divisor on $Y$, we denote by $\widebar{D}:= D \times_k \widebar{k}$.
Let $G:= \mathrm{Gal}(\widebar{k}/k)$.

By \autoref{l-log canonical base change}, given $t \in \R_{\geq 0}$ and $\delta \subseteq Z$ prime divisor, if $(X, B +t f^*\delta)$ is log canonical around $\delta$, then $(\widebar{X}, \widebar{B}+ t\widebar{f}^*\widebar{\delta})$ is log canonical around $\widebar{\delta}$.

Conversely, given $t \in \R_{\geq 0}$, $\delta \subseteq \widebar{Z}$ prime divisor, if $(\widebar{X}, \widebar{B}+ t\widebar{f}^*\delta)$ is log canonical around $\delta$, then $(X, B+ t f^*\delta^G)$ is log canonical around $\delta^G$.
Indeed, let $Y \rightarrow X$ be a birational map over $X$ and $E$ a place over $f^*\delta^G$.
Consider the base change with $\widebar{k}$, $\widebar{Y} \rightarrow \widebar{X}$, $\widebar{E} \subseteq \widebar{Y}$.
Since $k$ is perfect, $\widebar{E}$ is reduced by \cite[Tag 020I]{stacks}, thus the discrepancy of $E$ over $X$ coincides with the discrepancy of $\widebar{E}$ over $\widebar{X}$.

To conclude the proof, note that it is enough to show that $B_{\widebar{Z}}= B_{Z} \times_k \widebar{k}$, which follows from the above discussion.
\end{proof}

\subsection{Semiample perturbations}

Bertini theorems are well-known over fields of characteristic $0$. In \cite{semiample}, the author proves that we can perturb log canonical and klt pairs over fields of positive characteristic with divisors coming from a semiample linear system without changing the singularities.

\begin{theorem}[{\cite[Theorem 1]{semiample}}] \label{t-Theorem 1-Tanaka}
    Let $k$ be an $F$-finite field containing an infinite perfect field $k_0$ of characteristic $p>0$.
    Let $X$ be a projective variety over $k$ and $(X, B)$ a log canonical (resp.\ klt) pair, with $B \geq 0$.
    Assume that there exists a log resolution of $(X,B)$.
    Let $M$ be a semiample $\bQ$-Cartier $\Q$-divisor on $X$.
    Then, for $m\gg 0$ and sufficiently divisible, there exists an effective $\Q$-divisor $\Gamma_m \sim mM$ such that $\left( X, B+ \frac{1}{m}\Gamma_m \right)$ is log canonical (resp.\ klt).
\end{theorem}

\begin{corollary} \label{c-corollary Tanaka Theorem 1}
    Let $k$ be an infinite perfect field of characteristic $p>0$ and let $k_0 \subseteq k$ be an infinite perfect subfield.
    Assume the existence of log resolutions in dimension $n$.
    Let $X$ be a projective variety over $k$ of dimension $n$ and $(X, B)$ a log canonical (resp.\ klt) pair, with $B \geq 0$.
    Let $M$ be a $\bQ$-Cartier $\Q$-divisor on $X$ and $|V_{\bullet}|:= (|V_m| \subseteq |mM|)_{m \in \N}$ a $\Q$-sub-linear system.
    Fix an integer $\widebar{m} \geq 1$, choose a basis $\cB:= \{ s_1,..., s_{\ell}\}$ of $V_{\widebar{m}}$ and define $V_{\widebar{m}}(k_0):= \left\{ \sum_{i=1}^{\ell}a_i s_i \; \text{s.t. } a_i \in k_0 \right\}$.
\begin{itemize}
    \item[(i)] Suppose that $V_{\widebar{m}}$ is base point free. Then, for $m\gg 0$ and sufficiently divisible, there exists an effective $\Q$-divisor $\Gamma_m \in |V_{\widebar{m}m}|$ such that $\Gamma_m$ can be decomposed as $\sum_{i=1}^m D_i$ with $D_i \in |V_{\widebar{m}}(k_0)|$ and $\left( X, B+ \frac{1}{\widebar{m}m}\Gamma_m \right)$ is log canonical (resp.\ klt).
    \item[(ii)] Suppose that $\Bs(V_{\widebar{m}})=W \subseteq X$ and let $W' \subseteq W$ be a subvariety.
    Then, for $m\gg 0$ and sufficiently divisible, there exists an effective $\Q$-divisor $\Gamma_m \in |V_{\widebar{m}m}|$ such that $\Gamma_m$ can be decomposed as $\sum_{i=1}^m D_i$ with $D_i \in |V_{\widebar{m}}(k_0)|$, and $\left( X, B+ \frac{1}{m}\Gamma_m \right)$ is log canonical (resp.\ klt) outside $W$ and has a non-klt centre at $W'$.
\end{itemize}
\end{corollary}

\begin{proof}
    Point (i) follows directly from the proof of \autoref{t-Theorem 1-Tanaka} noting the following.
    In the notation of the proof of \cite[Proposition 2, Proposition 3]{semiample}, we consider $T_1:= \A^{\ell}$ with basis $\cB$ and then we choose $\cB^{\dim(X)}$ as basis of $T:= T_1^{\dim(X)}$.
    With this choice, the statement follows.

    As for point (ii), let $\mu \colon Y \to X$ be a birational model such that $\mu^*|V_{\widebar{m}}|= |M_{\widebar{m}}|+\Phi$, where $|M_{\widebar{m}}|$ is base point free and $\Phi$ is the fixed divisor.
    By possibly passing to a higher model, we can assume $\mu$ is a log resolution of $(X,B)$ and that $\Phi$ contains a place over $W'$.
    Define an effective $\Q$-divisor $B_Y$ as
    \[
    K_Y+B_Y=\mu^*(K_X+B)+E,
    \]
    where $E$ is an effective $\mu$-exceptional divisor with no components in common with $B_Y$.
Let $\varphi$ be the rational function defining $\Phi$, then, for all $i=1,...,\ell$, there is a rational function $t_i$ such that $\mu^*s_i=\varphi t_i$.
Define $\cB':= \left\{ t_1,..., t_{\ell}\right\}$,
$W_1(k_0):= \left\{ \sum_{i=1}^{\ell}a_i t_i \; \text{s.t. } a_i \in k_0 \right\}$ and, for all $m \in \N$, let $W_{m}:=\mathrm{Sym}^m M_{\widebar{m}}$.
Note that $|W_1(k_0)|+\Phi = \mu^*|V_{\widebar{m}}(k_0)|$.
By point (i), for some integer $m \geq 1$, we find an effective $\Q$-divisor $\Gamma_{Y,m}$ that admits a decomposition as $\sum_{i=1}^m D_i$ with $D_i \in W_1(k_0)$ and such that $\left(Y, B_Y + \frac{1}{m}\Gamma_{Y,m}\right)$ is log canonical.
Therefore, $\left(Y, B_Y + \frac{1}{m}\Gamma_{Y,m}+ \Phi\right)$ is log canonical outside $\supp(\Phi)$ and has a non-klt place over $W'$. Since $D_i+\Phi \in \mu^*|V_{\widebar{m}}(k_0)|$, by the projection formula, there exists an effective $\Q$-divisor $\Gamma_m \in |V_{\widebar{m}m}|$ on $X$ which pulls-back to $\Gamma_{Y, m}+ m\Phi$ and satisfies the required properties.
\end{proof}

\subsection{Property \texorpdfstring{$(\ast)$}{} case}

We are now ready to prove that the moduli part is nef for fibrations satisfying Property $(\ast)$.

\begin{theorem} \label{t-goal star}
Assume the birational dlt LMMP, inversion of adjunction and the existence of log resolutions in dimension up to $n$.
Assume the dlt LMMP in dimensions $n',1$, for all $n' \leq n$.
Assume that $\mathrm{char}(k)= p>2$.
Let $f\colon X \rightarrow Z$ be a fibration from a normal projective variety $X$ of {dimension $n$} onto a normal projective curve $Z$ and $(X/Z, B)$ a GGLC pair associated with it with $B \geq 0$.
Suppose that $(X/Z, B)$ satisfies {Property $(\ast)$} and $X$ is $\Q$-factorial.
Assume that ${K_X+B}$ is $f$-nef, then the moduli part $M_X$ is nef.
\end{theorem}

\begin{proof}
    The strategy to prove \autoref{t-goal star} is inspired by the proof of \cite[Lemma 3.12]{positivity}.
    First of all, if $k$ is not algebraically closed, let $\widebar{k}$ be its algebraic closure, $\widebar{f} \colon \widebar{X} \rightarrow \widebar{Z}$ the base change of $f$ with $\widebar{k}$ and $\widebar{B}:= B \times_k \widebar{k}$.
    By \autoref{l-perfect nefness} and \autoref{l-perfect moduli part ecc}, $(\widebar{X}/\widebar{Z}, \widebar{B})$ is GGLC, it satisfies Property $(\ast)$ and $M_{\widebar{X}}= M_X \times_k \widebar{k}$ is $\widebar{f}$-nef.
    If we prove nefness of $M_{\widebar{X}}$, by \autoref{l-perfect nefness}, this implies that $M_X$ is nef as well.
    Therefore, we can assume that $k$ is algebraically closed.
    
    If $M_X$ was not nef, there would exist $\rho$, extremal ray, such that $M_X \cdot \rho <0$.
    Let $A$ be an ample divisor on $X$ such that $H_{\rho} := M_X+A$ is a supporting hyperplane for $\rho$.
    In particular, $H_{\rho}$ is nef.
    
    \noindent \textbf{Outline of the proof}
    
    \noindent \underline{Non big case.} Assume that $H_{\rho}$ is not big. Then we can find a negative curve that is general enough on which $M_X$ is negative. 
    We conclude by \autoref{p-non-big B vertical}.
    
    \noindent \underline{Big case.} If $H_{\rho}$ is big, we produce a geometric log canonical centre containing $\rho$.
    Thanks to \autoref{t-property star modifications v2.0}, we extract this centre over $X^{(e)}$, i.e.\ after a base change of $f$ with a high enough power of the Frobenius morphism.
    The aim is to prove nefness of the moduli part by induction on the dimension, thus we perform adjunction using \autoref{p-adjunction moduli}.
    Note that the case $\dim(X)=1$ is trivial.\\
    
    \noindent \textbf{Non big case.}
    Let $d$ be the numerical dimension of $H_{\rho}$ and assume that $d < n$.
    Define $D_i:= H_{\rho}$ for $2 \leq i \leq d +1$ and $D_i:= A$ for $d +1 < i \leq n$.
    Since $H_{\rho}^{d +1} \cdot A^{n - d -1}=0$, $M_X \cdot H_{\rho}^{d} \cdot A^{n-d -1} = (H_{\rho}-A) \cdot H_{\rho}^{d} \cdot A^{n-d -1} < 0$.
    Then,
    \begin{itemize}
    \item[(a)] $(M_X+A) \cdot D_2 \cdot ... \cdot D_n=0$;
    \item[(b)] $M_X \cdot D_2 \cdot ... \cdot D_n<0$;
    \item[(c)] $M_X+A$ is nef.
    \end{itemize}
    Therefore, by \autoref{p-non-big B vertical}, there exists a vertical curve $\zeta$ such that $M_X \cdot \zeta <0$, which is a contradiction.
    
    \noindent \textbf{Big case.}

    Now, assume that $H_{\rho}$ is big.
    
    \noindent \textbf{Step 1.} In this step, we produce a log canonical centre on the geometric generic fibre of $f$, containing a curve that is negative on the moduli part. More precisely, we find a horizontal curve $\xi \subseteq X$ and an effective $\Q$-divisor $\Gamma$ on $X$ such that:
    \begin{itemize}
        \item[(a)] $M_X \cdot \xi <0$;
        \item[(b)] $\Gamma \cdot \xi \leq 0$; 
        \item[(c)] $\xi|_{X_{\widebar{\eta}}}$ is a log canonical centre of $(X_{\widebar{\eta}}, B_{\widebar{\eta}}+\Gamma_{\widebar{\eta}})$.
    \end{itemize}
    
    By \cite[Theorem 1.1]{augmented_BS}, if $\varepsilon >0$ is small enough and $m\gg 0$ is divisible enough, ${\Bs(m(H_{\rho}-\varepsilon A))}$ is supported on those subvarieties $W \subseteq X$ such that $H_{\rho}|_W$ is not big. 
    Let $\sum_{i=1}^{\infty} a_i \zeta_i$ be a representative of the ray $\rho$, where $a_i >0$ and $\zeta_i$ are irreducible curves on $X$.
    Since $H_{\rho}$ is nef, $H_{\rho} \cdot \zeta_i=0$, therefore $\bigcup_{i=1}^{\infty} \zeta_i \subseteq {\Bs(m(H_{\rho}-\varepsilon A))}$.
    Note that, since $H_{\rho}$ is big, ${\Bs(m(H_{\rho}-\varepsilon A))}$ is a closed proper subset of $X$.
    Consider the restriction maps
    \[
    r_m \colon H^0(X, m(H_{\rho}-\varepsilon A)) \to H^0(X_{\widebar{\eta}}, m(H_{\rho}-\varepsilon A)|_{X_{\widebar{\eta}}}).
    \]
    Define the sub-linear system $|V_{\bullet}|:=(|m(H_{\rho}-\varepsilon A)|_{X_{\widebar{\eta}}})_{m \in \N}$ on $X_{\widebar{\eta}}$, then, for some $\widebar{m}>0$ sufficiently divisible, $\bigcup_{i=1}^{\infty} \zeta_i|_{X_{\widebar{\eta}}} \subseteq \Bs(V_{\widebar{m}})$.
    Let $V_{\widebar{m}}(k)$ be the image of $r_{\widebar{m}}$ and let $\{s_1, ..., s_{\ell}\}$ be a choice of basis of $V_{\widebar{m}}(k)$, considered as a $k$-vector space.
    Since $M_X$ is $f$-nef and $H_{\rho}-\varepsilon A \sim_{\Q} M_X+(1-\varepsilon)A$, there exists an index $i_0$ such that $\xi:=\zeta_{i_0}$ is a horizontal curve with $(H_{\rho}-\varepsilon A) \cdot \xi<0$ and $M_X \cdot \xi <0$.
    Let $\rho_{\widebar{\eta}}$ be the ray corresponding to $\rho$ in $X_{\widebar{\eta}}$ and $\xi_{\widebar{\eta}}:=\xi|_{X_{\widebar{\eta}}}$. 
    By \autoref{c-corollary Tanaka Theorem 1}, there exists an effective $\Q$-divisor $\widebar{\Gamma} \in |V_{\widebar{m}m}|$ such that $(X_{\widebar{\eta}}, B_{\widebar{\eta}}+\frac{1}{m}\widebar{\Gamma})$ is log canonical outside $\Bs(V_{\widebar{m}})$ and has a non-klt centre at $\xi_{\widebar{\eta}}$.
    Moreover, $\widebar{\Gamma}$ has a decomposition as $\sum_{i=1}^m D_i$ with $D_i \in |V_{\widebar{m}}(k)|$; in particular, $\widebar{\Gamma}$ belongs to the image of $r_{\widebar{m}m}$.
    Let $0\leq \lambda\leq \frac{1}{m}$ such that $(X_{\widebar{\eta}}, B_{\widebar{\eta}}+\lambda\widebar{\Gamma})$ has a log canonical centre at $\xi_{\widebar{\eta}}$ and is log canonical outside $\Bs(V_{\widebar{m}})$.
    Let $\Gamma' \in |\widebar{m}m(H_{\rho}-\varepsilon A)|$ be a lift of $\widebar{\Gamma}$ to $X$ and let $\Gamma:= \lambda \Gamma'^h$, we claim that it satisfies the required properties.
    Indeed, points (a) and (c) follow by construction and point (b) because $\Gamma \cdot \xi = {\lambda \widebar{m}m(H_{\rho}-\varepsilon A)\cdot \xi - \lambda\Gamma'^v\cdot \xi \leq 0}$. 
    
    \noindent \textbf{Step 2.} In this step, we perform a Frobenius base change on the base of the fibration in order to find a curve $\xi_e \subseteq X^{(e)}$ that is a log canonical centre of $(X^{(e)}, B_e+\Gamma_e)$, where $B_e:=\beta_e^*B^h+ \beta_e^{-1}B^v$ and $\Gamma_e := \beta_e^*\Gamma$.
    
    Let $\Delta:=B^h+\Gamma$ and $\Delta_e:= \beta_e^*\Delta$.
    Now, we want to compare the moduli part $M_X$ of $(X/Z, B)$ to $K_{X^{(e)}}$.
    By \autoref{t-formulina_general}, we have:
    \[
    \alpha_e^*K_{X^{(e)}} = (p^e-1)(K_X-f^*K_Z-R(f)) +K_X - \sum_{D \text{ wild}}w_{D,e}D,
    \]
    where $w_{D,e} \geq 0$ for every $D$ wild component.
    By \autoref{t-canonical foliation}:
    \begin{align*}
        &\alpha_e^*(K_{\cF_e}+ \Delta_e+W(f_e))=
     \alpha_e^*(K_{X^{(e)}}-f_e^*K_Z-R(f_e) + \Delta_e)\\
        &=(p^e-1)(K_X-f^*K_Z-R(f)) +K_X - \sum_{D \text{ wild}}w_{D,e}D - f^*K_Z - \alpha_e^*R(f_e) + p^e\Delta.
    \end{align*}
    Now, if $D$ is a vertical prime divisor in $X$, let $\ell_D=n_D p^{e_D}$ be its multiplicity with respect to $f$, where $e_D \geq 0$ and $n_D$ is coprime with $p$. By \autoref{c-ramifications}, for $e\gg 0$, there exist integers $1\leq h_D\leq e_D$ independent of $e$ and $\cW$ a subset of the set of wild components independent of $e$, such that:
    \[
        \alpha_e^*R(f_e) = R(f) - \sum_{D \in \cW} (p^{h_D}-1)D.
    \]
    Bringing everything together: 
    \begin{align*}
        &\alpha_e^*(K_{\cF_e}+ \Delta_e + W(f_e))= \\
        &(p^e-1)(K_X-f^*K_Z-R(f)) +K_X - f^*K_Z - R(f) + p^e \Delta\\ &- \sum_{D \text{ wild}}w_{D,e}D + \sum_{D \in \cW} (p^{h_D}-1)D=\\
        & p^e (M_X+ \Gamma) - \sum_{D \text{ wild}}w_{D,e}D + \sum_{D \in \cW} (p^{h_D}-1)D.
    \end{align*}
    Let $\xi_e$ be a curve corresponding to $\xi$ in $X^{(e)}$. Since ${\sum_{D \in \cW} (p^{h_D}-1)D \cdot \xi}$ is independent of $e$ for $e \gg 0$, up to choosing a bigger $e \geq 0$:
    \[
    (K_{\cF_e}+\Delta_e+W(f_e)) \cdot \xi_e <0.
    \]
    
    \noindent \textbf{Step 3}. In this step, we construct a geometric {$(\ast)$-modification} that extracts a log canonical place over $\xi_e$ and we compute its moduli part.
    
    Up to possibly choosing an even bigger $e \geq 0$, by \autoref{t-property star modifications v2.0}, there exists a dlt GGLC pair $(Y/Z, C+ E)$ with $Y$ $\Q$-factorial, and a diagram:
    \[
    \xymatrix{
    Y \ar[r]^{\mu} \ar[dr]_{g} & X^{(e)} \ar[d]^{f_e} \\
     & Z,
    }
    \]
    where $\mu$ is a birational map and the centre of $E$ is $\xi_e$.
    The induced fibration $E \rightarrow Z$ is separable since by construction $E_{\widebar{\eta}}$ is reduced.
    Moreover, there exist an effective exceptional $\Q$-divisor $R$, supported on non-log canonical places of $(X^{(e)},\Delta_e+ B_e^v)$, and a vertical effective $\Q$-divisor $G$, such that
    \[
    K_Y+ C +E + R = \mu^*(K_{X^{(e)}}+\Delta_e + B_e^v) + G,
    \]
    where $K_Y+C+E$ is $\mu$-nef.
    
    Now, we want to compare the moduli part $M_X$ of $(X/Z, B)$ to the moduli part $M_Y$ of ${(Y/Z, C+E)}$.
    Recall that all the vertical divisors with multiplicity $>1$ lie over the discriminant part $B_Z$ of $(X/Z, B)$, which is reduced by Property $(\ast)$. Thus, by \autoref{t-canonical foliation}:
    \[
    K_{\cF_e} + \Delta_e +W(f_e) = K_{X^{(e)}} + \Delta_e+ B_e^v - f_e^*(K_Z+B_Z).
    \]
    Therefore:
    \begin{align*}
    & \mu^*(K_{\cF_e}+ \Delta_e+W(f_e))=\\
    & K_Y+C+E-g^*(K_Z+C_Z)+R+ g^*(C_Z-B_Z)-G=\\
    & M_Y+R+g^*(C_Z-B_Z)-G,
    \end{align*}
    where $C_Z$ is the discriminant part of $(Y/Z,C+E)$.
    
    Now, we study more closely the $\Q$-divisor $g^*(C_Z-B_Z)-G$.
    Let $z \in Z$ and $b_z$ and $c_z$ be its coefficients in $B_Z$ and $C_Z$, respectively.
    If $b_z=1$, then the fibre over $z$ is contained in $B^v$, so $c_z=1$ as well by construction.
    If $b_z=0$, it may happen that $c_z=1$.
    Let $D$ be a non $\mu$-exceptional vertical prime divisor contained in $g^{-1}(z)$.
    By abuse of notation, call $D$ also $\beta_e(\mu (D))$.
    Then, by construction, the coefficient of $D$ in $G$ is $1$ and, since $(X/Z,B)$ has {Property $(\ast)$} and $b_z=0$, the coefficient of $D$ in $f^*(z)$ must be $1$, hence its coefficient in $g^*(z)$ is $1$ as well ($\mu$ is an isomorphism at the generic point of $D$).
    All in all, we get that $g^*(C_Z-B_Z)-G$ is $\mu$-exceptional.
    Then, applying the Negativity lemma (\cite[Lemma 3.39]{KM}), we conclude that $R+g^*(C_Z-B_Z)-G$ is effective.
    By abuse of notation, let $\xi \subseteq \supp(E)$ be a curve mapping to $\xi_e$ and such that $\xi \not\subseteq \Supp(R)$.
    Since $\xi$ is horizontal, $(R^v+g^*(C_Z-B_Z)-G) \cdot \xi \geq 0$.
    Moreover, $R^h \cdot \xi \geq 0$,
    thus:
    \[
    M_Y \cdot \xi <0.
    \]
    
    \noindent \textbf{Step 4.} In this step we do adjunction on $E$.
    
    Since $(Y,C+E)$ is $\Q$-factorial and dlt, the normalisation morphism $\nu\colon E^{\nu} \rightarrow E$ is an isomorphism in codimension $1$ by \autoref{l-Snormal}.
    Define $C_{E^{\nu}}$ on $E^{\nu}$ by adjunction, so that we have $(K_Y+C+E)|_{E^{\nu}}=K_{E^{\nu}}+C_{E^{\nu}}$, and let $E^{\nu} \xrightarrow{g_E} Z' \xrightarrow{\varphi} Z$ be the Stein factorisation of $g|_{E^{\nu}}$.
    By \autoref{p-property star adjunction}, $(E^{\nu}/Z',C_{E^{\nu}})$ is GGLC and satisfies Property $(\ast)$. Moreover, by \autoref{p-adjunction moduli}, there exists a vertical effective divisor $V$ on $E^{\nu}$ such that $M_Y|_{E^{\nu}} - V =M_{E^{\nu}}$, the moduli part of $(E^{\nu}/Z', C_{E^{\nu}})$.
    By abuse of notation, let $\xi$ be a curve in $E^{\nu}$ mapping onto $\xi$.
    Then,
    $
    M_{E^{\nu}}\cdot \xi <0.
    $
    
    If $\dim(X)=2$, $E^{\nu} \to Z'$ is the identity, whence $M_{E^{\nu}}=0$, giving a contradiction.
    Assume that $\dim(X) \geq 3$.
    Note that, if $\zeta \subseteq \supp(E^{\nu})$ is vertical over $Z'$, $\nu(\mu(\zeta))$ is a point because $\mu(E)$ is supported on a horizontal curve.
    Therefore, ${(K_Y+C+E)|_{E^{\nu}}\cdot \zeta \geq 0}$.

    \noindent \textbf{Step 5.} In this step we conclude by induction on the dimension.

    If $E^{\nu}$ is not $\Q$-factorial, consider a geometric $(\ast)$-modification $(E'/Z', C')$ constructed with \autoref{t-property star modifications v2.0}.
    Let $e \gg 0$, $\mu'\colon E' \to E^{\nu, (e)}$ be the resulting map, $g_e \colon E^{\nu, (e)} \to Z'$ and $g'\colon E' \to Z'$ the induced fibration.

    As in Step 2, if $\cG_e$ is the foliation induced by $E^{\nu, (e)} \to Z'$, $M_{E^{\nu}}$ is the moduli part of $(E^{\nu}/Z', C_{E^{\nu}})$ and $C_e:=\beta_e^*C_{E^{\nu}}^h+\beta_e^{-1}C_{E^{\nu}}^v$, then: 
    \[
    \alpha_e^*(K_{\cG_e}+C_e^h+W(g_e)) = p^e M_{E^{\nu}} - \sum_{D \text{ wild}} w_{D,e}D + \sum_{D \in \cW'}(p^{h'_D}-1)D,
    \]
    where $w_{D,e} \geq 0$ for every $D$ wild component and $\ell'_D=n'_Dp^{e'_D}$ is their multiplicity with respect to $g$, $\cW'$ is a subset of the set of wild components and $1\leq h'_D \leq e'_D$ for all $D \in \cW'$.
    Moreover, for $e\gg 0$, $\cW'$ and $h'_D$ are independent of $e$.
    Therefore, if $\xi_e$ is a curve mapping onto $\xi$, for $e \gg 0$,
    \[
        (K_{\cG_e}+C_e^h+W(g_e)) \cdot \xi_e <0.
    \]
    Moreover, note that $K_{\cG_e}+C_e^h$ is $g_e$-nef over a dense open subset of $Z'$ since $M_{E^{\nu}}$ is $g_E$-nef. 

    As in Step 3,
\[
{\mu'}^*(K_{\cG_e}+C_e^h+W(g_e))=M_{E'}+Q,
\]
where $M_{E'}$ is the moduli part of $(E'/Z', C')$ and $Q$ is a vertical effective $\Q$-divisor.
In particular, if $\xi'$ is a curve mapping onto $\xi$, $M_{E'} \cdot \xi'<0$.
Moreover, $M_{E'}$ is $g'$-nef over a dense open subset $U$ of $Z'$. 

  
    If $M_{E'}$ is not $g'$-nef, run a $(K_{E'}+ C')$-MMP over $Z'$ and let $(E''/Z', C'')$ be the resulting pair with associated fibration $g''\colon E'' \rightarrow Z'$.
    Since $K_{E'}+C'$ is $g'$-nef over $U$, $E'$ and $E''$ are isomorphic on $g'^{-1}(U)$.
    In particular, $\xi'$ is not contracted and the MMP does not terminate with a Mori fibre space.
    Let $\xi''$ be the image of $\xi'$ in $E''$.
    Let $p_1\colon \widetilde{E} \to E'$ and $p_2\colon \widetilde{E} \to E''$ be the projections from a resolution of $E' \dashrightarrow E''$.
    Then, $p_2^*(K_{E''}+C'')=p_1^*(K_{E'}+C')-D''$, where $D''$ is an effective {$p_2$-exceptional} $\Q$-divisor.
    We can choose $p_2$ to be an isomorphism on $g'^{-1}(U)$, therefore $D''$ is vertical over $Z'$.
    By \autoref{l-normal_afterMMP} and \autoref{p-properties property star}, $(E''/Z', C'')$ is GGLC, it satisfies Property $(\ast)$ and its discriminant part coincides with the discriminant part of $(E'/Z', C')$, whence:
    \[
        M_{E''} \cdot \xi'' =M_{E'}\cdot \xi' - D'' \cdot \widetilde{\xi} <0,
    \]
    where $M_{E''}$ is the moduli part of $(E''/Z', C'')$ and $\widetilde{\xi}$ is a lift of $\xi'$ to $\widetilde{E}$.
    
    To sum up, $(E''/Z', C'')$ satisfies Property $(\ast)$, $E''$ is $\bQ$-factorial, $M_{E''}$ is $g''$-nef and there is a horizontal curve $\xi'' \subseteq E''$ such that $M_{E''} \cdot \xi'' <0$.
    By the inductive assumption $M_{E''}$ is nef, which is a contradiction.
\end{proof}

\begin{remark}
    In the statement of \autoref{t-goal star}, we are assuming that $p>2$ to avoid the presence of purely inseparable nodes. Note that, if $\dim(X) \leq 3$ and $X_{\wb{\eta}}$ is normal, then we do not encounter this problem. Indeed, when doing adjunction, the dimension of the geometric generic fibre becomes at most $1$, therefore, even in characteristic $2$ purely inseparable nodes do not appear.
    In particular, the conclusions of \autoref{t-goal star} hold also if $p=2$ and $\dim(X) \leq 3$.
    Moreover, we expect the same statement to hold in general, even if the characteristic of the base field is $2$ and if we only assume that $(X_{\wb{\eta}}^{\nu}, B_{\wb{\eta}}^{\nu})$ is log canonical. However, we would need to include a conductor divisor supported on the inseparable nodes in all our computations and we do not work out the details here.
    The same remark applies for \autoref{t-goal} and \autoref{t-ftrivial}.
\end{remark}

\subsection{General case}

In the general case, we may need to go to a higher model of $X$ to achieve positivity of the moduli part.

\begin{definition} \label{d-generically crepant}
    Let $(X/Z, B)$ and $(X'/Z', B')$ be GGLC pairs over a perfect field of any characteristic such that the associated fibrations $f\colon X \to Z$ and $f'\colon X' \to Z'$ are birationally equivalent. We say that $(X/Z,B)$ and $(X'/Z',B')$ are \textbf{crepant over the generic point of $Z$} if, given birational morphisms $p_1\colon \widetilde{X} \to X$ and $p_2 \colon \widetilde{X} \to X'$ which resolve the indeterminacies of $X \dashrightarrow X'$, we have that
    \[
        p_1^*(K_X+B)-p_2^*(K_{X'}+B')
    \]
    is vertical with respect to the induced fibration $\widetilde{X} \to Z$.
\end{definition}

\begin{theorem} \label{t-goal}
Assume the birational dlt LMMP, inversion of adjunction and the existence of log resolutions in dimension up to $n$.
Assume the dlt LMMP in dimensions $n',1$, for all $n' \leq n$.
Assume that $\mathrm{char}(k)= p>2$.
Let $f\colon X \rightarrow Z$ be a fibration from a normal projective variety $X$ of dimension $n$ onto a normal projective curve $Z$ and $(X/Z, B)$ a GGLC pair associated with it such that $B \geq 0$ and $(X,B)$ is log canonical.
Moreover, if $\eta$ is the generic point of $Z$, assume that $X_{\eta}$ is $\Q$-factorial and $(X_{\eta}, B_{\eta})$ is dlt.
Suppose that $K_X+B$ is $f$-nef.
Then, there exist a projective pair $(Y, C)$ satisfying {Property $(\ast)$}, with $C \geq 0$, and a commutative diagram
\[
\xymatrix{
Y \ar@{-->}[r]^{b} \ar[dr]_{g} & X \ar[d]^f \\
 & Z,
}
\]
with $b$ birational and such that:
\begin{itemize}
\item[(i)] the pairs $(X/Z, B)$ and $(Y/Z, C)$ are crepant over the generic point of $Z$;
\item[(ii)] the moduli part $M_Y$ of $(Y/Z, C)$ is nef.
\end{itemize}
\end{theorem}

\begin{proof}
By \autoref{t-property star modifications v1.2}, there exists $(X', B')$ a $\Q$-factorial dlt pair satisfying Property $(\ast)$ with $B' \geq 0$ and a commutative diagram
\[
\xymatrix{
X' \ar[r]^{\mu} \ar[dr]_{f'} & X \ar[d]^f \\
& Z,
}
\]
where $\mu$ is a projective birational which is an isomorphism over a dense open subset $U$ of $Z$ and $K_{X'}+ B'= \mu^*(K_X+B) +G$ with $G$ vertical effective $\Q$-divisor.
Since $\mu$ is an isomorphism over $U$, the geometric generic fibre $(X'_{\widebar{\eta}}, B'_{\widebar{\eta}})$ is log canonical.
The divisor $K_{X'}+ B'$ may no longer be $f'$-nef.
In this case, we run a $(K_{X'}+ B')$-MMP over $Z$.
Note that this is an isomorphism on ${f'}^{-1}(U) \setminus \Supp(G)$. In particular, this MMP cannot end with a Mori fibre space and it is an isomorphism on the generic fibre.
Let $(Y, C)$ be the resulting pair, $b\colon Y \dashrightarrow X$ the induced birational map and $g \colon Y \to Z$ the induced fibration.
Since $(K_{X'}+B')|_{X'_{\eta}} = (K_Y+C)|_{X'_{\eta}}$, $(Y/Z, C)$ and $(X/Z, B)$ are crepant over the generic point of $Z$.
Moreover, $(Y/Z,C)$ satisfies Property $(\ast)$ by \autoref{p-properties property star}, $Y$ is $\Q$-factorial and $K_Y+C$ is $g$-nef.
Thus, we conclude by applying \autoref{t-goal star}.
\end{proof}

\subsection{The \texorpdfstring{$K$}{}-trivial case}

As a corollary of the previous results we get the canonical bundle formula in the classical setting, i.e.\ when the fibration is $K$-trivial.
The proof is very similar to \cite[Theorem 1.3]{positivity}.

\begin{theorem} \label{t-ftrivial}
Assume the birational dlt LMMP, inversion of adjunction and the existence of log resolutions in dimension up to $n$.
Assume the dlt LMMP in dimensions $n',1$, for all $n' \leq n$.
Assume that $\mathrm{char}(k)= p>2$.
Let $f\colon X \rightarrow Z$ be a fibration from a normal projective variety $X$ of dimension $n$ onto a normal projective curve $Z$ and $(X/Z, B)$ a GGLC pair associated with it such that $B \geq 0$ and $(X,B)$ is log canonical.
Moreover, if $\eta$ is the generic point of $Z$, assume that $X_{\eta}$ is $\Q$-factorial and $(X_{\eta}, B_{\eta})$ is dlt.
Assume that $K_X+B \sim_{\Q} f^*L$ for some $\Q$-Cartier $\Q$-divisor $L$ on $Z$.
Let $M_Z:= L-(K_Z+B_Z)$, where $B_Z$ is the discriminant part of $(X/Z, B)$.
Then, $M_Z$ is nef.
\end{theorem}

\begin{proof}
Let $M_X=f^*M_Z$ be the moduli part of $(X/Z,B)$.
Let $\mu\colon X' \rightarrow X$ be a $(\ast)$-modification as in \autoref{t-property star modifications v1.2}.
Let ${f'\colon X' \rightarrow Z}$ be the induced morphism and $B' \geq 0$ defined so that $K_{X'}+ B'= \mu^*(K_X+B)$.
Let $B'_Z$ and $M'$ be respectively the discriminant and the moduli parts of $(X'/Z, B')$.
Then $M'=\mu^*M_X\sim_{\Q} f'^*L'$, for some $\Q$-Cartier $\Q$-divisor $L'$ on $Z$, so it is enough to show that $M'$ is nef.
There exists an effective vertical $\Q$-divisor $G$ such that, if $B^* := B'+G$, $(X'/Z, B^*)$ satisfies Property $(\ast)$. Let $\Sigma_Z$ be the discriminant part of $(X'/Z, B^*)$ and $M^*$ its moduli part.
Let $\gamma^*_z$ be the log canonical threshold of $f'^*(z)$ with respect to $(X'/Z, B^*)$ and $\gamma'_z$ the one with respect to $(X'/Z, B')$, for $z \in Z$.
Note that $\gamma^*_z \leq \gamma'_z$. In particular, $\Sigma_Z \geq B'_Z$.

Now, define $B'':= B'+ f'^*(\Sigma_Z-B'_Z)$, so that $K_{X'}+B'' \sim_{\Q} f'^*L''$, for some $\Q$-Cartier $\Q$-divisor $L''$ on $Z$. The discriminant part of $(X'/Z, B'')$ is $\Sigma_Z$ and its moduli part is $M'$.
Note that $B'' \leq B^*$ and the difference is vertical.

It is possible that $K_{X'}+B^*$ is not $f'$-nef, in which case we perform a $(K_{X'}+ B^*)$-MMP over $Z$.
Let $\varphi\colon X' \dashrightarrow Y$ be the result of this MMP and $(Y, C)$ the resulting pair, where $C:= \varphi_*B^*$. Note that this MMP cannot end with a Mori fibre space since $K_{X'}+B'$ is $f'$-nef.
Call $g\colon Y \rightarrow Z$ the induced fibration and $M_Y$ the moduli part of $(Y/Z, C)$.
Since $(X, B^*)$ is log canonical, so is $(Y, C)$ and, by \autoref{p-properties property star}, $(Y/Z,C)$ satisfies Property $(\ast)$ and its discriminant part is $\Sigma_Z$.
Let $C'':= \varphi_*B''$. Since $K_{X'}+B'' \sim_{\Q} f'^*L''$, the divisor $(K_Y+C)-(K_Y+C'')$ is $g$-nef. Moreover, by construction, it is effective and supported on a vertical $\Q$-divisor $\Phi$ which does not contain any fibre. 
Let $\Psi$ be an effective vertical $\Q$-divisor such that $\Phi + \Psi \sim_{\Q} g^*D$, for some $\Q$-divisor $D$ on $Z$.
Then, $(K_Y+C)-(K_Y+C'') \sim_{\Q} g^*D -\Psi$ is $g$-nef, whence $\Phi=0$ and $C=C''$.
The pair $(Y/Z, C)$ satisfies Property $(\ast)$ and $M_Y$ is $g$-nef by construction.
Therefore, $M_Y$ is nef by \autoref{t-goal star}.

Now, we want to compare $M_Y$ and $M'$.
Let $\widetilde{X}$ be a common resolution of $X' \dashrightarrow Y$ with $p\colon \widetilde{X} \rightarrow X'$, $q\colon \widetilde{X} \rightarrow Y$ the induced projections.
We have that both $p^*M'-q^*M_Y$ and $q^*M_Y-p^*M'$ are $q$-nef since $M'$ is $f'$-trivial.
Moreover, $q_*(p^*M'-q^*M_Y)= \varphi_*B''-C=0=q_*(q^*M_Y-p^*M')$.
Therefore, by the Negativity lemma (see \cite[Lemma 3.39]{KM}), $p^*M' = q^*M_Y$, and it is nef, whence the conclusion.
\end{proof}

\subsection{The canonical bundle formula in dimension \texorpdfstring{$3$}{}}

 For threefolds over algebraically closed fields of characteristic $p>5$, the LMMP, inversion of adjunction and existence of log resolutions are known to hold, so our results hold unconditionally.

\begin{corollary} \label{c-3folds}
Let $f\colon X \rightarrow Z$ be a fibration from a normal projective variety $X$ of dimension $3$ onto a normal projective curve $Z$ over an algebraically closed field of characteristic $p>5$, and $(X/Z, B)$ a GGLC pair associated with it such that $B \geq 0$ and $(X,B)$ is log canonical.
Moreover, if $\eta$ is the generic point of $Z$, assume that $X_{\eta}$ is $\Q$-factorial and $(X_{\eta}, B_{\eta})$ is dlt.
Let $B$ be an effective $\Q$-divisor on $X$ such that $(X,B)$ is a log canonical pair.
Suppose that $K_X+B$ is $f$-nef.
Then, there exist a pair $(Y, C)$ which satisfies {Property $(\ast)$}, with $C \geq 0$, and a commutative diagram
\[
\xymatrix{
Y \ar@{-->}[r]^{b} \ar[dr]_{g} & X \ar[d]^f \\
 & Z,
}
\]
with $b$ birational such that
\begin{itemize}
\item[(i)] $(X/Z, B)$ and $(Y/Z, C)$ are crepant over the generic point of $Z$;
\item[(ii)] the moduli part $M_Y$ of $(Y/Z, C)$ is nef.
\end{itemize}
Moreover, if $K_X+B \sim_{\Q} f^*L$ for some $\Q$-Cartier $\Q$-divisor $L$ on $Z$, define $M_Z:= L-(K_Z+B_Z)$, where $B_Z$ is the discriminant part of $(X/Z, B)$.
Then $M_Z$ is nef.
\end{corollary}

\begin{proof}
By \autoref{t-MMP}, \autoref{r-inversion} and \autoref{r-resolutions}, we conclude that \autoref{t-goal star}, \autoref{t-goal} and \autoref{t-ftrivial} hold unconditionally for threefolds over a perfect field of characteristic $p>5$.
\end{proof}

\bibliographystyle{alpha}	
\bibliography{biblio}

\end{document}